\newtheorem{thm}{Theorem}[section]
\newtheorem{lem}[thm]{Lemma}
\newtheorem{claim}[thm]{Claim}
\newtheorem{prop}[thm]{Proposition}
\theoremstyle{definition}
\newtheorem{dfn}[thm]{Definition}
\theoremstyle{remark}
\newtheorem{rem}[thm]{Remark}
\numberwithin{equation}{section}
\def\ca{{\mathcal A}}
\def\ce{{\mathcal E}}
\def\cf{{\mathcal F}}
\def\ch{{\mathcal H}}
\def\ct{{\mathcal T}}
\def\cv{{\mathcal V}}
\newcommand{\bn}{\mathbb N}
\def\br{{\mathbb R}}
\def\bz{{\mathbb Z}}
\renewcommand{\a}{\alpha}
\def\b{\beta}
\def\g{\gamma}        \def\G{\Gamma}
\def\d{\delta}        \def\D{\Delta}
\def\eps{\varepsilon}
    \def\Th{\Theta}
\def\l{\lambda}
\def\r{\rho}
\def\s{\sigma}       \renewcommand{\S}{\Sigma}
\def\t{\tau}
\def\f{\varphi}
\def\o{\omega}        \def\O{\Omega}
\newcommand{\set}[1]{\left\{#1\right\}}
\def\ov{\overline}
\def\wt{\widetilde}
\DeclareMathOperator{\osc}{Osc}
\DeclareMathOperator{\ab}{Ab}
\DeclareMathOperator{\deck}{deck}
\DeclareMathOperator{\V}{Vec}
\def\dg{z}
\def\uforme{\O^1(\cf)}                                         %universal 1-forms
\def\forme{\O^1(K)}                                          %smooth 1-forms
\def\ovforme{\ov{\forme}^{\mathrm{sup}}} % sup-completion of smooth 1-forms
\def\qex{\O^1_{\rm{loc}}(K)}                           %locally exact smooth 1-forms
\def\toplex{\O^1_{\rm{loc}}C(K)}                    %locally exact topological forms
\def\KK{\widetilde{K}}
\def\LL{\widetilde{L}}
\def\ds{\displaystyle}
\def\UUAC{Uniform Universal Abelian Covering }
\def\bordo{\partial}
\def\perim{\pi}
\def\normaQ{Q^{1/2}}
\def\normaInf#1{ \| #1 \|_{\mathrm{sup}} }
\begin{document}

\title[1-forms on Gasket]
{Integrals and Potentials of differential 1-forms on the Sierpinski Gasket}
% %
\author{Fabio Cipriani}
\address{(F.C.) Politecnico di Milano, Dipartimento di Matematica,
piazza Leonardo da Vinci 32, 20133 Milano, Italy.}%
\email{fabio.cipriani@polimi.it}%
\author{Daniele Guido}
\address{(D.G.) Dipartimento di Matematica, Universit\`a di Roma ``Tor
Vergata'', I--00133 Roma, Italy}
\email{guido@mat.uniroma2.it}
\author{Tommaso Isola}
\address{(T.I.) Dipartimento di Matematica, Universit\`a di Roma ``Tor
Vergata'', I--00133 Roma, Italy}
\email{isola@mat.uniroma2.it}
\author{Jean-Luc sauvageot}
\address{(J.-L.S.) Institut de Math\'ematiques, CNRS-Universit\'e Pierre et Marie
Curie, boite 191, 4 place Jussieu, F-75252 Paris Cedex 05, France}
\email{jlsauva@math.jussieu.fr}

\thanks{This work has been partially supported by GNAMPA, MIUR,
the European Networks ``Quantum Spaces - Noncommutative Geometry"
HPRN-CT-2002-00280, and ``Quantum Probability and Applications to
Physics, Information and Biology'', GDRE GREFI GENCO, and  the ERC Advanced Grant 227458 OACFT ``Operator Algebras and Conformal Field Theory"}
\subjclass{58J50, 46LXX, 57-XX, 57M15}%
\keywords{Self-similar fractals,  integrals of 1-forms,  covering fractafolds.}%

% ----------------------------------------------------------------
\begin{abstract}
We provide a definition of integral, along paths in the Sierpinski gasket $K$, for differential smooth 1-forms associated to the standard Dirichlet form $\ce$ on $K$.
We show how this tool can be used to study the potential theory on $K$. In particular, we prove: i) a de Rham reconstruction of a 1-form from its periods around lacunas in $K$; ii) a Hodge decomposition of 1-forms with respect to the Hilbertian energy norm; iii) the existence of potentials of smooth 1-forms on a suitable covering space of $K$. We
finally show that this framework provides versions of the de Rham duality theorem for the fractal $K$.
\end{abstract}
\maketitle

\section{Introduction}
%%%%%%%%%
\subsection{Purpose of the work}
The aim of this work is to develop, on the fractal set $K$ known as {\it Sierpinski gasket}, a notion of {\it line integral}
\[
\int_\gamma \omega
\]
along oriented paths $\gamma$ in $K$ for a class of {\it differential 1-forms} $\omega$ on $K$. The purpose for doing this is twofold: on the one hand, we wish to set up tools useful to contribute to the potential theory of $K$, studied in particular by Kigami \cite{Kiga}, Strichartz \cite{St06}, see also the recent work by Koskela and Zhou \cite{KZ}; on the other hand, our intention is to use them to construct local representations, i.e. by integrals, of topological invariants of $K$.
\par\noindent
Our approach is based on the existence of a differential calculus underlying any regular Dirichlet space $X$, developed in \cite{CiSa}, \cite{C} and further investigated in \cite{CiSa2} and \cite{IRT} for fractal spaces.
There, the  differential bimodule of universal 1-forms $\uforme$ on  the algebra of finite energy functions $\cf$ on $X$ is endowed with a quadratic form $Q$ associated with the Dirichlet energy. By separation and completion one gets a Hilbert $\cf$-bimodule $\ch$, called the {\it tangent module} of $\ce$,  whose elements are termed {\it differential 1-forms} on $X$, together with a derivation $\partial :\cf\to\ch$, i.e. a map satisfying the Leibniz rule
$\partial (fg)=(\partial f)g + f(\partial g)$ $ f,g\in\cf$.
Such derivation is a differential square root of the Dirichlet form in the sense that
\[
\ce[a]=\|\partial a\|^2_\ch\qquad a\in\cf\, .
\]
Our main results are: define the integral of elements of $\uforme$ along (a suitable class of) oriented paths in $K$, and show that this integral passes to the quotient w.r.t. $Q$, hence is well defined on the space $\forme:=\uforme/\{Q=0\}$, whose elements we call smooth 1-forms;
establish de Rham first and second Theorems, by proving that the sequence of periods around lacunas gives rise to a unique {\it harmonic form}; prove a Hodge Theorem, namely each cohomology class in a suitably defined de Rham cohomology grop $H^1 (K)$ has a unique harmonic representative; the establishment of a pairing between the cohomology of forms and the  \v{C}ech homology group of the gasket (de Rham duality theorem); and finally, the construction of an (abelian) projective covering, related to the Uniform Universal Covering introduced in \cite{BerPla}, where  potentials of 1-forms will be defined.

\medskip

The classical framework we refer to is that of harmonic integrals on differentiable manifolds, developed by de Rham \cite{dR} and  Hodge \cite{Hod}. There, the notions of differential 1-form and line integral are direct outcome of the notion of tangent bundle. The analytic tool of exterior differentiation of forms then naturally provides homotopy invariants by means of the differential complex and its associated cohomology groups.
The notion of line integral on the manifold $M$ allows to establish a local pairing first between closed 1-forms and 1-cycles, and then between the first de Rham cohomology group $H^1 (M)$ and the first singular homology group $H_1 (M)$. Furthermore, the choice of a Riemannian metric on M allows to introduce the notions of co-closed and harmonic forms in such a way that each cohomology class in $H^1 (M)$ has a unique harmonic representative.

\medskip

Trying to develop the above framework on the Sierpinski gasket $K$, two main problems have to be tackled.

The first is that $K$ is not a manifold: it was originally introduced in \cite{S} as an example of space with a dense set of ramification points so that it has no open sets homeomorphic to Euclidean domains.
This is the reason why a notion of differentiable structure on $K$ has to be introduced in an unconventional way. We choose to do so by using the notion of energy or Dirichlet form, a sort of generalized Dirichlet integral, developed by  Beurling and  Deny \cite{BD}, that can be considered on locally compact Hausdorff spaces. In particular, we consider the so called standard Dirichlet form $\ce$ considered by Kusuoka \cite{Ku} in his construction of a diffusion process on $K$ and studied by Fukushima and Shima \cite{FS} and by Kigami \cite{Kiga} in his framework of harmonic theory on self-similar fractal sets like $K$.
The primary role of $\ce$ is to provide the class of finite energy functions $\cf$, which is a dense subalgebra of the algebra of continuous functions $C(K)$, and plays the role of a Sobolev space on the gasket. More importantly, there exists a canonical first order differential calculus associated to Dirichlet forms, as developed in \cite{CiSa}. It is represented by a closed derivation $\partial :\cf\to\ch$, defined on $\cf$ with values in a Hilbert $C(K)$-module $\ch$, which is a differential square root of the Dirichlet form in the sense that $\ce [a]=\|\partial a\|^2_\ch$.
We notice that this differential structure, namely the module $\ch$ and the derivation $\partial$, is essentially unique and only depends upon the quadratic form $\ce$ defined on $\cf$ and not on the choice of a reference measure on $K$.
One of the main technical issues will be the proof that the integral along oriented paths makes sense on (suitably regular) elements of $\ch$.
As we shall see below, this will force us to a long detour: the introduction of  the bimodule of universal 1-forms $\uforme$ on the Dirichlet algebra $\cf$, the definition of line-integrals on it, and then the proof that an element of $\uforme$ with zero Hilbert norm has zero integral along all edges, namely the integral makes sense on the quotient. What we get then is an $\cf$-module $\forme$, which densely embeds in $\ch$, thus furnishing the smooth subspace on which line integrals make sense.

The second problem is that $K$ is a topological space which is not semilocally simply connected, so that it has no universal covering, i.e. a simply connected covering space \cite{Massey2}. This fact affects the development of a potential theory on $K$. In an ordinary manifold $M$, any closed form $\omega$ has a pull back $\widetilde\omega$ on the universal covering space $\widetilde M$, which is obviously still closed but also exact, since $\widetilde M$ is simply connected. Hence, any closed form on a manifold admits a primitive function $U$ on $\widetilde M$, in the sense that $dU=\widetilde\omega$. Moreover, the primitive $U$ is a potential of $\o$ in the sense that its line integral along a path $\gamma$ in $M$ can be computed by the formula
\[
\int_\gamma\omega = U(p)-U(q)
\]
where $q,p\in \widetilde M$ are the initial and final points, respectively, of a lifting $\widetilde\gamma$ in $\widetilde M$ of $\gamma$.
\vskip0.2truecm\noindent
For the needs of a potential theory on the gasket $K$, the role played by the universal covering of a manifold, acted upon by its fundamental group, will be played  by the Uniform Universal Cover $\widetilde K$ introduced by Berestovskii and Plaut \cite{BerPla}, and more precisely by its abelian counterpart $\widetilde L$, acted upon by the first \v{C}ech homology group $\check{H}_1 (K)$, which is a projective limit of finitely generated abelian groups. In particular, the potentials $U$ of 1-forms on $K$ will be affine functions on $\LL$.

\subsection{Main results}

We now come to a closer look at our results. Our first step is the  definition of line integrals of the elements of  the bimodule of universal 1-forms $\uforme$ on the Dirichlet algebra $\cf$ along elementary paths in $K$, namely finite unions of consecutive oriented edges in $K$. Also, we consider a quadratic form $Q$ on $\uforme$ such that $Q[df]=\ce[f]$, as in the tangent bimodule construction. Now we have two natural quotients to take on  $\uforme$, either w.r.t. the intersection of the  kernels of the functionals $\o\mapsto\int_e\o$, where $e$ is any edge, or w.r.t. the kernel of the quadratic form $Q$. A main task will be to show that the kernels coincide, hence both the integrals and $Q$ make sense on the quotient. While the proof that $Q$ makes sense on the space $\forme$ of forms modulo forms with zero integral on edges is quite direct, the converse is not at all trivial. What we do is to analyze periods of forms $\o$ in $\forme$ around the lacunas of the gasket, and show that, given such periods, we may construct another form $\o'$ with the same periods in a canonical way as a series of a suitable sequence of forms $dz_\s$, parametrized by lacunas of $K$. We then prove that the difference $\o -\o'$ between the original form and the series is an exact form $dU$, thus showing at once the first and second de Rham theorems for the gasket, namely the fact that one may build a form given its periods, and the fact that such a form is indeed unique, up to exact forms.
In the same time, since the forms $dz_\s$ are harmonic, we obtain a Hodge theorem, i.e. we show that any form  has a harmonic representative in the space of (closed) forms modulo exact ones.  Finally, since the decomposition of a form $\o\in\forme$
\begin{equation*}
\o=d U+\sum_{\s}k_\s dz_\s
\end{equation*}
consists of pairwise orthogonal summands w.r.t. $Q$, we have that $Q[\o]=0$ implies $k_\s=0$ for all $\s$, and $\ce[U]=0$, namely $\o=0$, thus proving that $\forme$ coincides with the image of $\uforme$ in $\ch$ under the quotient map, hence is dense  in the tangent module $\ch$.
As a further outcome of our analysis, it turns out that the only natural definition of an external differential on 1-forms giving a differential complex is the trivial one, namely all 1-forms are closed,  in accordance with the fact that the gasket is topologically one-dimensional.

A second major issue of our paper is the attempt of extending the integral of a form from elementary paths to more general ones,  construct potentials of 1-forms, and prove a de Rham duality theorem. The space on which potentials of 1-forms will be defined is the projective limit $\widetilde L$ of a sequence of regular abelian covering spaces $\widetilde L_n$, where all loops around  lacunas of order up to $n$ are unfolded. Such pro-covering is acted upon by the \v{C}ech homology group $\check{H}_1(K,\bz)$ of the gasket, and is an abelian counterpart of the Uniform Universal Covering space introduced in \cite{BerPla}.
The results just mentioned will take two different versions, a purely algebraic one and a more analytical one.

The first version concerns locally exact forms. This subspace is the natural one from the point of view of algebraic topology, first because the integral of such forms extends naturally to all curves in the gasket; second, because any locally exact form $\o$ has a potential $U_\o$ on $\widetilde L$, such that the integral of $\o$ along a path $\g$ coincides with the variation of $U_\o$ at the end-points of a lifting of $\g$ to $\widetilde L$. Moreover, the potential $U_\o$ is  associated with a homomorphism $\f_\o:\check{H}_1(K,\br)\to\br$ such that $U_\o(g x)=U_\o(x)+\f_\o(g)$. The pairing $\langle\o,g\rangle=\f_\o(g)$  extends to a de Rham duality between $\check{H}_1(K,\br)$ and the space of locally exact forms modulo exact ones.

The second version concerns a suitable completion of closed smooth forms.
Indeed, in contrast with the classical situation, the space of locally exact smooth forms is a proper subspace of the space of closed smooth forms. Enlarging the class of forms  as to contain all smooth forms will correspond to a restriction of a class of allowed paths. 
We observe that smooth forms satisfy an estimate which puts them in a Banach space $\ch_N$ strictly contained in $\ch$. Then we prove that forms in $\ch_N$ may be integrated along all paths satisfying a dual estimate. Such paths are said to have finite effective length. Such finiteness can be read on the pro-covering as well. There, we define a (possibly infinite) distance $d$, which splits the space $\LL$ in $d$-components made of points with mutually finite distance, and selects a normal subgroup $\G_N$ of $\check{H}_1(K,\bz)$. Then, paths with finite effective length are those whose lifting lies in a single $d$-component, and homology classes of closed paths with finite effective length are elements of $\G_N$.
We then prove that any form in $\ch_N$ has a $\G_N$-affine potential  on any given $d$-component of  $\LL$, and  the integral of a form in $\ch_N$   along a path $\g$  with finite effective length coincides with the variation of the potential at the end points of a lifting of $\g$.  Moreover,  such integral gives  a nondegenerate pairing between $\G_N$ and the space $\ch_N$ modulo exact forms, more precisely the latter is the Banach space dual of  $\G_N\otimes_\bz\br$.

%%%%%%%%%
\section{The space of $1$-forms on the gasket}
%%%%%%%%%

%%%%%%%%%
\subsection{Preliminary notions}
%%%%%%%%%

We denote by $K$ the Sierpinski gasket, one of the most studied self-similar fractal sets. It was introduced in \cite{S} as a curve with a dense set of ramified points and has been the object of investigations in Probability \cite{Ku}, Theoretical Physics \cite{RT} and Mathetical Analysis \cite{FS,Kiga,St06}.
\par\noindent
Let $p_0 := (0,0)$, $p_1:= (\frac12,\frac{\sqrt3}2)$, $p_2:= (1,0)$ be the vertices of an equilateral triangle and consider the contractions $w_i$ of the plane: $x\in\br^2\to p_i+\frac12(x-p_i)\in\br^2$. Then $K$ is the unique fixed-point w.r.t. the contraction map $E \mapsto \cup_{i=0}^2 w_i(E)$ in the set of all compact subsets of $\br^2$, endowed with the Hausdorff metric. Two ways of approximating $K$ are shown in Figures \ref{Fig1} and \ref{Fig2}.
\vskip0.2truecm\noindent
Let us denote by $\Sigma_m :=\{0,1,2\}^{m}$ the set of words  composed by $m$ letters chosen in the alphabet  $\{0,1,2\}$, and by $\Sigma :=\bigcup_{m\ge 0}\Sigma_m$ the whole vocabulary (by definition $\Sigma_0 :=\{\emptyset\}$). A word $\sigma\in \Sigma_m$ has, by definition, length $m$, and this is denoted by $|\sigma |:=m$. For $\sigma =\s_1 \s_2\dots \s_m\in\Sigma_m$, let us denote by $w_\sigma$ the contraction $w_\sigma :=w_{\s_1}\circ w_{\s_2}\circ\dots \circ w_{\s_m}$.
\vskip0.2truecm\noindent
Let $V_0 :=\{p_0 ,p_1 ,p_2\}$ be the set of vertices of the equilateral triangle and $E_0 :=\{e_{0}, e_{1}, e_{2}\}$ the set of its edges, with $e_i$ opposite to $p_i$. Then, for any $m\ge 1$,  $V_m :=\bigcup_{|\sigma |=m} w_\sigma (V_0)$ is the set of vertices of a finite graph ($i.e.$ a one-dimensional simplex) $(V_m,E_m)$ whose edges are given by $E_m :=\bigcup_{|\sigma |=m} w_\sigma (E_0)$ (see Figure 2). The self-similar set $K$ can be reconstructed also as an Hausdorff limit either of the increasing sequence $V_m$ of vertices or of the increasing sequence $E_m$ of edges, of the above finite graphs. Set $V_* := \cup_{m=0}^\infty V_m$, and $E_* := \cup_{m=0}^\infty E_m$.
% figura
     \begin{figure}[ht]
 	 \centering
	 \psfig{file=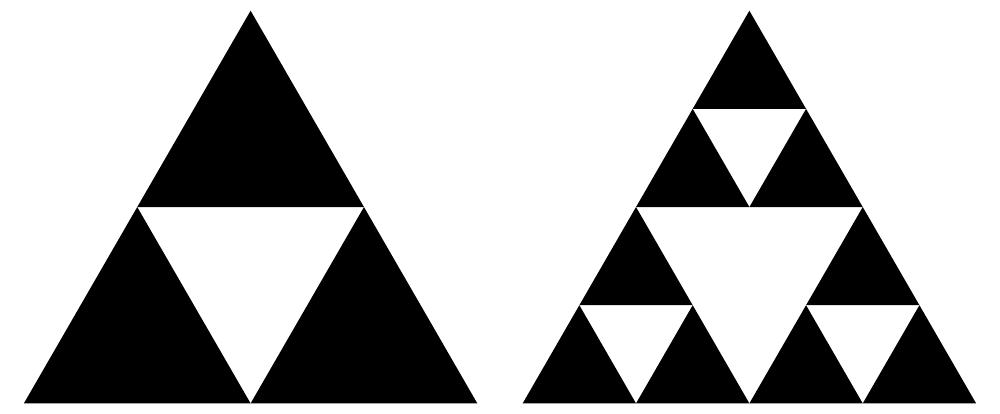,height=1.2in}
	 \psfig{file=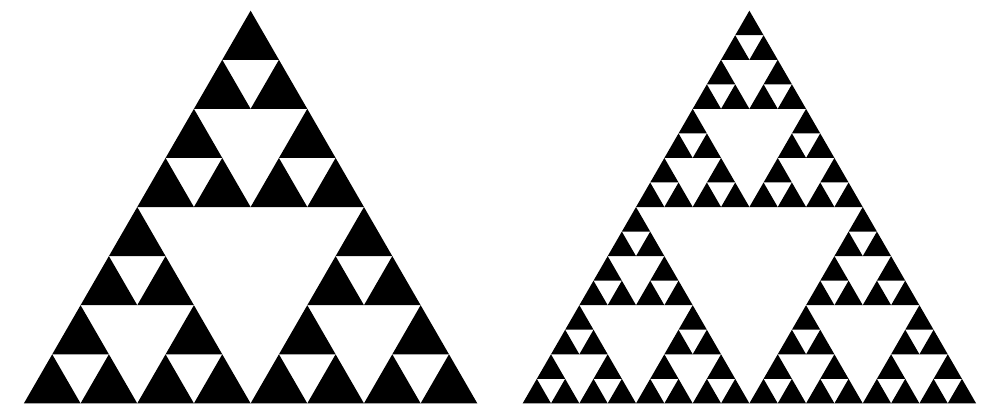,height=1.2in}
	 \caption{Approximations from above of the Sierpinski gasket.}
	 \label{Fig1}
     \end{figure}
     \begin{figure}[ht]
 	 \centering
	 \psfig{file=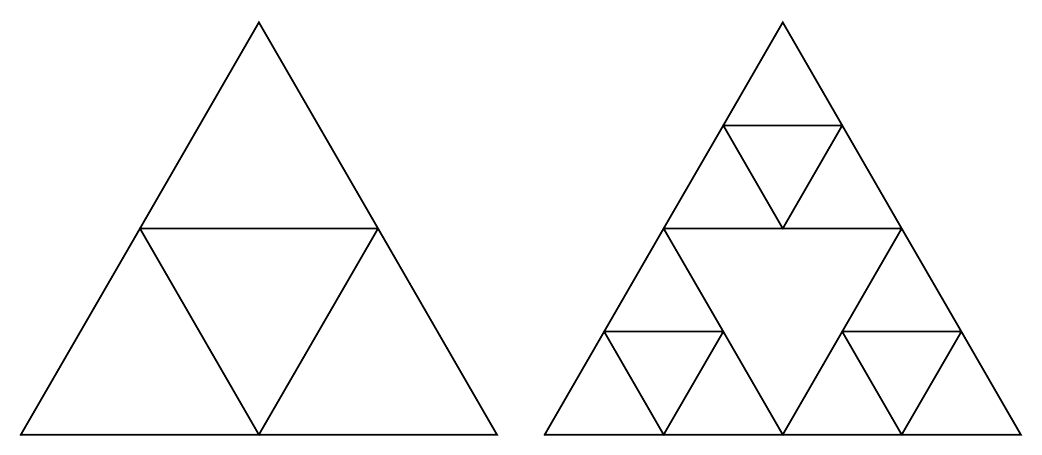,height=1.2in}
	 \psfig{file=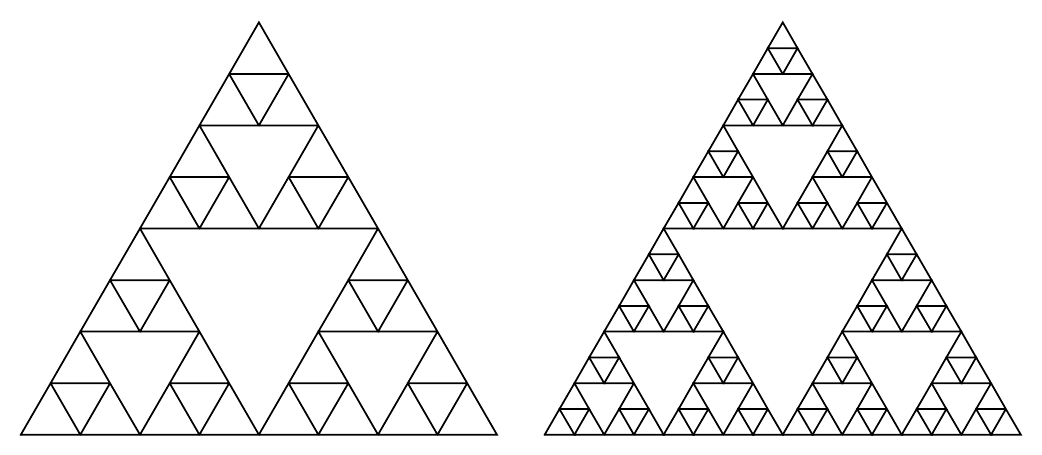,height=1.2in}
	 \caption{Approximations from below of the Sierpinski gasket.}
	 \label{Fig2}
     \end{figure}

In the present work a central role is played by the quadratic form $\ce : C(K)\to [0,+\infty ]$ given by
$$
\ce[f]=\lim_{m\to\infty} \left(\frac53\right)^m\sum_{e\in E_m}|f(e_+)-f(e_-)|^2,
$$
where each edge $e$ has been arbitrarily oriented, and $e_-,e_+$ denote its source and target. It is a regular Dirichlet form since it is lower semicontinuous, densely defined on the subspace  $\cf:= \set{f\in C(K) : \ce[f]<\infty}$ and satisfies the {\it Markovianity property}
\begin{equation}\label{contraction}
\ce [f\wedge 1]\leq \ce [f]\qquad f\in C(K)\, .
\end{equation}

The existence of the limit above and the mentioned properties are consequences of the theory of {\it harmonic structures} on self-similar sets developed by  Kigami \cite{Kiga}.
As a result of the theory of Dirichlet forms \cite{BD,FOT}, the domain $\cf$ is an involutive subalgebra of $C(K)$ and, for any fixed  $f,g\in \cf$,
the functional
\begin{equation}\label{rem:cs}
\cf\ni h\mapsto   \frac12\big(  \ce (f,gh)-\ce(f\overline{g},h)+\ce(\overline{h}f,g)
 \big)
\end{equation}
extends to a continuous functional on $C(K)$ so that it can be represented by a finite Radon measure called the {\it energy measure} (or {\it carr\'e du champ}) {\it of $f$ and $g$} and denoted by $\mu (f,g)$. In particular, for $f\in\cf$, $\mu (f,f)$ is a nonnegative measure and one has the representation
\[
\ce [f]=\int_K 1\, d\mu (f,f) =\mu (f,f)(K)\qquad f\in\cf\, .
\]
In applications, $f$ may represent a configuration of a system, $\ce [f]$ its corresponding total energy and $\mu (f,f)$ represents its energy distribution.
\par\noindent
In the present work we will denote with $\uforme$ the $\cf$-bimodule of  {\it universal $1$-forms} \cite{GVF} on $\cf$, that is $\uforme$ is the sub-$\cf$-bimodule of the algebraic tensor product $\cf \otimes \cf$, generated by elements of the form $fdg$, where the differential operator $d$ is defined by $df := f\otimes 1-1\otimes f$, $f\in\cf$ and the bimodule operations are given by $f\,dg = f(g\otimes 1-1\otimes g) := fg\otimes 1-f\otimes g$ and $dg\, f := d(gf)-g\, df = g\otimes f - 1 \otimes gf$, $f,g\in\cf$.
\par\noindent
As observed in  \cite{CiSa}, in a general regular Dirichlet space over a locally compact, separable Hausdorff space $X$, the properties of the Dirichlet form give rise to a positive semi-definite inner product on $\uforme$ given by the linear extension of the form
\begin{equation}\label{innerProduct}
Q(fdg,hdk)=\int_X\overline{f}h\,d\mu(g,k)\qquad f,g,h,k\in\cf\, .
\end{equation}
By separation and completion, this gives rise to a Hilbert space $\ch$ which is in fact a Hilbert $C_0(X)$-bimodule called the {\it tangent bimodule associated to $\ce$} and whose elements are called {\it square integrable forms}. In the present case of the Sierpinski gasket, since the Dirichlet form is strongly local, the right and left actions coincide so that $\ch$ is a Hilbert $C(K)$-module. Moreover, the derivation $\partial :\cf\to\ch$, associated to the Dirichlet form $\ce$ comes directly from the universal derivation $d:\cf\to \uforme$ in such a way that
\begin{equation}\label{innerProduct1}
Q(fdg,hdk)=
(f\partial g ,h\partial k)_\ch\, ,
\qquad
Q[df]=\|\partial f\|^2_\ch = \ce[f]\qquad f,g,h,k\in\cf\, .
\end{equation}

\medskip

The Dirichlet or energy form $\ce$ should be considered as a Dirichlet integral on the gasket. It is closable with respect to any Borel regular probability measure on $K$ which is positive on open sets and vanishes on finite sets (see \cite{Kiga} Theorem 3.4.6 and \cite{Kiga3} Theorem 2.6). Once such a measure $m$ has been chosen, $\ce$  gives rise to a positive, self-adjoint operator on $L^2 (K, m)$, which may be thought of as a Laplace-Beltrami operator on $K$. However, since in the present work only the Dirichlet form   plays a role, we content ourselves with  the measure-valued Laplacian, as studied in \cite{Kiga2}.

A function $f\in \cf$ is said to be {\it harmonic} in a open set $A\subset K$ if, for any $g\in\cf$ vanishing on $A^c$, one has
\[
\ce (f,g)=0\, .
\]
As a consequence of the Markovianity property \eqref{contraction}, a Maximum Principle holds true for harmonic functions on the gasket \cite{Kiga}.
In particular, one calls $0$-{\it harmonic} a function $u$ on $K$ which is harmonic in $V_0^c$. Equivalently, for given boundary values on $V_0$, $u$ is the unique function in $\cf$ such that $\ce[u] = \min\set{\ce[v]: v\in\cf, v|_{V_0} = u}$. More generally, one may call  $m$-{\it harmonic} a  function that,  given its values on $V_m$, minimizes the energy among all functions in $\cf$. For such functions we have
$$
\ce[u]=\left(\frac53\right)^m\sum_{e\in E_m}|u(e_+)-u(e_-)|^2\, .
$$
It is not difficult to check that $f\in \cf$ is $m$-harmonic if and only if $\Delta f$ is a linear combination of Dirac measures supported on the vertices $V_m$.

\begin{dfn} (Cells, lacunas)
For any word $\s\in\Sigma_m$, define a corresponding {\it cell} in $K$ as follows
\[
C_\s=w_{\s}(K)\, ,
\]
its {\it perimeter} by $\perim C_\s =w_{\s}(E_0)$, its (combinatorial) {\it boundary} by $\bordo C_\s =w_{\s}(V_0)$ and its (combinatorial) {\it interior} by $C_\s^o =C_\s\setminus \bordo C_\s$. We will also define the {\it lacuna} $\ell_\emptyset$, see Fig.~\ref{lacuna}, as the topological boundary of the first removed triangle according to the approximation in Fig.~\ref{Fig1}. For any $\s\in\Sigma$, the lacuna $\ell_\s$ is defined as $\ell_\s =w_{\s}(\ell_\emptyset)$.
     \begin{figure}[ht]
 	 \centering
	 \psfig{file=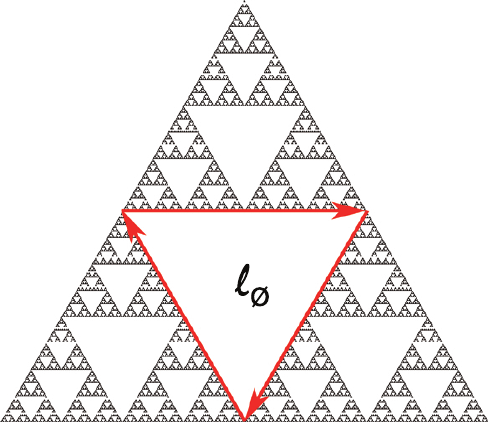
	 }
	  \caption{The lacuna $\ell_\emptyset$}
	 \label{lacuna}
     \end{figure}
\\
For a function $f$ on $K$, let us define its {\it oscillation on a closed subset $T\subseteq K$} as
\[
 \osc(f)(T):=\max_{x,y\in T}|f(x)-f(y)|=\max_{T}f-\min_{T}f\, .
\]
\end{dfn}

It is easy to check that if $f$ is a harmonic function in the interior of a cell $C$ and $C_1$ is one of its three sub-cells, then $\osc(f)(C_1)\leq\frac35\osc(f)(C)$ (see for example \cite{St06} Chapter 1 Exercise 1.3.6).

Recall that $\uforme$ denotes the bimodule of universal 1-forms over the algebra $\cf$.
There is a natural pairing between elements of $\cf \otimes \cf$ and oriented edges which is given  by $(f\otimes g)(e):=f(e_+)g(e_-)$ on elementary tensors. As a consequence,
\begin{align}
d g (e)&=g(e_+)-g(e_-)\\
(f\, d g)(e)&=f(e_+)d g(e)\label{pairing1}\\
(d g\, f)(e)&=f(e_-)d g(e).\label{pairing2}
\end{align}

%%%%%%%%%
\subsection{Integrating $1$-forms along elementary paths}\label{subs:Integral}
%%%%%%%%%

\begin{dfn}
A path in $K$ given by a finite union of consecutive oriented edges in $E_*$ is called {\it elementary}.
\end{dfn}

\noindent
Let $\gamma$ be an oriented elementary path in $K$ and $\o = \sum_{i\in I} f_idg_i \in\uforme$. For $n\in\bn$, define
$$I_n(\g)(\o) = \sum_{e\in E_n(\gamma)} \o(e),$$
where  $E_n(\gamma)$ denotes the set of oriented edges of level $n$ contained in $\g$.

\begin{dfn}\label{dfn:integral}
We define the integral of a 1-form $\o$ along an elementary path $\g$ as the limit $\int_\gamma\o = \lim_{n\to\infty} I_n(\g)(\o)$.
\end{dfn}
\begin{rem}
The  integral of 1-forms defined above is a kind of Riemann-Stieltjes integral conditioned to diadic partitions of edges. Unfortunately, while the classical result of Young \cite{Young} for $\int fdg$ requires H\"older continuity of $f$ and $g$ with sum of the exponents $>1$, restrictions to edges of finite energy functions on the gasket are known to be only $\b$-H\"older, with $\b<1/2$  (cf. e.g. \cite{Jo96}), therefore we cannot use Young result. Also, restrictions to edges of finite energy functions are not of bounded variation in general\footnote{In \cite{ACSY}, p.18, examples are given of finite energy functions with non BV restriction to edges, but is observed that harmonic functions do have BV restriction to edges. As a consequence, the integral of the form in Proposition \ref{Prop:NonLocEx} along an elementary path makes sense as a Lebesgue-Stieltjes integral.}, therefore we cannot use Lebesgue-Stieltjes integral either.
Nevertheless, on identifying an edge $e\in E_*$ with $[0,1]$, the bilinear form $(f,Dg)_e$ on $L^2(e)$ given by $\int_0^1 f(x)g'(x)\,dx$ for $f,g$ smooth functions, naturally extends to a bounded form on  $H^{1/2}(e)$, hence makes sense also for $f,g\in\cf$ since, by results of Jonsson \cite{Jo}, traces of finite energy functions on edges $e\in  E_*$ belong to the fractional Sobolev space $H^{\a} (e)$ for $\a\leq\a_0$, $\a_0=\frac{\log(10/3)}{\log4}\sim0.87$.
The existence of the limit in Definition \ref{dfn:integral} and the coincidence of the two notions are proved below.
\end{rem}

\begin{thm} \label{JLnotes}
Let $\o\in\uforme$ be a  1-form and $\g$  an elementary path in $K$. Then
\begin{itemize}
\item[$(i)$] the integral $\int_\gamma\o$ is well defined,
\item[$(ii)$] the integral is a bimodule trace, namely
\[
\int_\gamma h\, \o = \int_\gamma \o\, h\qquad h\in\cf\, ,
\]
\item[$(iii)$] for all $h\in\cf$, the following approximation holds true:
\[
\int_\gamma h\, \o=\lim_n\sum_{e\in E_n(\g)}h(e_+)\int_e\o\, .
\]
\item[$(iv)$] Let $e$ be an edge in $K$, $f,g$  finite energy functions on $K$. Then
\begin{equation}\label{twointegrals}
\int_e fdg=(f,Dg)_{e}.
\end{equation}
\end{itemize}
\end{thm}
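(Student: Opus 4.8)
The plan is to first establish $(iv)$ as the analytic backbone, since it identifies the combinatorial limit $\lim_n I_n(\gamma)$ with an honest bilinear form on a fractional Sobolev space; once $(iv)$ is in hand, convergence in $(i)$ follows on each edge, and additivity over the finitely many edges making up $\gamma$ gives $(i)$ in general. For $(iv)$, identify $e$ with $[0,1]$ and recall from Jonsson's trace results (quoted in the remark) that the restrictions $f|_e, g|_e$ lie in $H^\alpha(e)$ for $\alpha \le \alpha_0 > 1/2$, hence in particular in $H^{1/2}(e)$. The dyadic Riemann sums defining $I_n(\gamma)(f\,dg)$ on the edge $e$ are, by \eqref{pairing1}, $\sum_{e'\in E_n(e)} f(e'_+)\big(g(e'_+)-g(e'_-)\big)$. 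I would recognise this as $(f_n, D g)_e$ where $f_n$ is the dyadic step-function approximation of $f$ at level $n$ (right-endpoint values on level-$n$ subintervals) and $(\cdot,D\cdot)_e$ is the extension to $H^{1/2}(e)$ of the smooth pairing $\int_0^1 f g'$. The key estimate is that $(\cdot, D\cdot)_e$ is bounded on $H^{1/2}(e)\times H^{1/2}(e)$ — this is the classical fact that the bilinear form $\int f\,dg$ extends continuously to $H^s\times H^{1-s}$, and symmetrically after integration by parts to $H^{1/2}\times H^{1/2}$ — together with $f_n \to f|_e$ in $H^{1/2}(e)$, which holds because dyadic step-function projections converge in $H^s$ for $s<1$ (here one uses $1/2 < \alpha_0$, so there is room). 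Combining these, $(f_n, Dg)_e \to (f, Dg)_e$, which is exactly \eqref{twointegrals}, and in particular the limit exists. Extending by linearity over $\omega = \sum_i f_i\,dg_i$ and summing over the edges of $\gamma$ finishes $(i)$ and $(iv)$ simultaneously.

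For $(ii)$, the natural route is to prove $(iii)$ first and deduce $(ii)$ from it. Using \eqref{pairing1} and \eqref{pairing2}, for a single term $\omega = f\,dg$ one has $(h\,\omega)(e) = h(e_+)f(e_+)\,dg(e)$ while $(\omega\,h)(e) = h(e_-)f(e_+)\,dg(e)$, so
\[
I_n(\gamma)(h\,\omega) - I_n(\gamma)(\omega\,h) = \sum_{e\in E_n(\gamma)} \big(h(e_+)-h(e_-)\big)\, f(e_+)\, dg(e).
\]
I would bound the right-hand side by $\big(\max_e |f|\big)$ times a Cauchy–Schwarz product $\big(\sum_{e\in E_n(\gamma)} |dh(e)|^2\big)^{1/2}\big(\sum_{e\in E_n(\gamma)} |dg(e)|^2\big)^{1/2}$, and observe that $(5/3)^n$ times each such sum is dominated by $\mathcal E[h]$, resp. $\mathcal E[g]$, along $\gamma$; hence each factor is $O\big((3/5)^{n/2}\big)$ and the difference tends to $0$. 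This proves $\int_\gamma h\,\omega = \int_\gamma \omega\,h$, i.e. $(ii)$. For $(iii)$, write $\int_\gamma h\,\omega = \lim_n I_n(\gamma)(h\,\omega) = \lim_n \sum_{e\in E_n(\gamma)} h(e_+)\,\omega(e)$ directly from the definition of $I_n$ applied to $h\,\omega$ together with \eqref{pairing1}; but one must check that replacing $\omega(e)$ by $\int_e \omega$ does not change the limit, i.e. that $\sum_{e\in E_n(\gamma)} h(e_+)\big(\omega(e) - \int_e\omega\big) \to 0$. This is where the uniform-in-$e$ control from the proof of $(iv)$ is reused: $\big|\omega(e) - \int_e\omega\big|$ is controlled by the $H^{1/2}$-seminorms of $f_i,g_i$ restricted to $e$, which are $o(1)$ in a summable-square sense as the edge shrinks, and $\sum_{e\in E_n(\gamma)} |h(e_+)|^2$ is bounded; a Cauchy–Schwarz argument then closes it.

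The main obstacle is the boundedness of the bilinear form $(f,Dg)\mapsto \int f\,dg$ on $H^{1/2}(e)\times H^{1/2}(e)$ and, relatedly, the convergence $f_n\to f|_e$ in $H^{1/2}(e)$ for the dyadic step approximants of a merely $H^{\alpha_0}$ function — these are the places where one genuinely needs harmonic-analysis input (fractional Sobolev/Besov space theory, paraproduct-type or duality estimates) rather than the self-similar combinatorics of the gasket. Everything else — parts $(i)$ (modulo $(iv)$), $(ii)$, $(iii)$ — reduces to the elementary energy inequality $(5/3)^n\sum_{e\in E_n(\gamma)}|dg(e)|^2 \le \mathcal E[g]$ plus Cauchy–Schwarz, so the technical weight of the theorem is concentrated entirely in the edge-wise analytic statement $(iv)$.
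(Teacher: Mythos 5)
Your plan rests everything on $(iv)$, and that is where the genuine gap lies. You identify the dyadic sum $I_n(e)(f\,dg)=\sum_{e'\in E_n(e)}f(e'_+)\,dg(e')$ with $(f_n,Dg)_e$, where $f_n$ is the dyadic \emph{step-function} approximant of $f$, and you then invoke boundedness of $(\cdot,D\cdot)_e$ on $H^{1/2}(e)\times H^{1/2}(e)$ together with the claim that ``dyadic step-function projections converge in $H^s$ for $s<1$''. That claim fails exactly in the range you need: a function with a jump discontinuity does not belong to $H^{1/2}(0,1)$ (its $H^{1/2}$ seminorm diverges logarithmically), so $f_n\notin H^{1/2}(e)$, the pairing $(f_n,Dg)_e$ is not covered by the $H^{1/2}\times H^{1/2}$ extension you appeal to, and $f_n\to f|_e$ in $H^{1/2}(e)$ cannot hold. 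The paper's proof of $(iv)$ avoids this precisely by using the \emph{continuous piecewise-linear} interpolants (which lie in $H^1\subset H^{1/2}$), noting $I_k(f\,dg)=I_k(f_n\,dg_n)$ for $n\ge k$, and proving $f_n\to f$ in $H^\a$ for $1/2<\a\le\a_0$ via Jonsson's dyadic characterization of the norm. Your route could in principle be repaired by recasting the pairing as a duality $H^{s}\times H^{1-s}$ with $1-\a_0\le s<1/2$ (step functions do lie in $H^s$ for $s<1/2$, and $g|_e\in H^{\a_0}\subset H^{1-s}$), but then the convergence of the step approximants in such an $H^s$ still has to be proved; as written, the step is wrong.

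A related structural point: making $(i)$ depend on $(iv)$ both imports Sobolev machinery where none is needed and loses quantitative information the paper uses later. The paper proves $(i)$ by the elementary telescoping identity $I_{n+1}-I_n=\sum_{e\in E_n(\g)}(f(e^0)-f(e_+))(g(e^0)-g(e_-))$ plus Cauchy--Schwarz, giving the explicit geometric rate $\tfrac12(3/5)^{n+1}(\ce[f]+\ce[g])$; this rate is what drives $(ii)$, $(iii)$ and, e.g., Lemma~\ref{Q-norm}. Your $(ii)$ coincides with the paper's argument and is fine, but in $(iii)$ your assertion that $\sum_{e\in E_n(\g)}|h(e_+)|^2$ is bounded is false, since the number of level-$n$ edges in $\g$ grows like $2^n$; the estimate still closes either with the paper's $\|h\|_\infty$ bound against the summable telescoping errors, or by Cauchy--Schwarz against $\sum_{e\in E_n}|\o(e)-\int_e\o|^2\le C(9/25)^n$, but one needs these explicit geometric bounds rather than the qualitative ``$o(1)$ in a summable-square sense'' you invoke.
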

\begin{proof}
It is not restrictive to assume $\o=fdg$. We choose $n_0\in\bn$ such that $\gamma$ is a finite union of edges of level $n_0$.
\\
$(i)$ For $n\geq n_0$ and $e\in E_n(\g)$, let $e^0\in V_{n+1}$ be the middle point of the edge $e$. One computes
\begin{align}
I_{n+1}(fd g)&=\sum_{e\in E_n(\gamma)}f(e_+)\big(g(e_+)-g(e^0)\big) + \sum_{e\in E_n(\gamma)} f(e^0)\big(g(e^0)- g(e_-)\big) \label{eq:diff}\\
&=I_n(fd g)+ \sum_{e\in E_n(\gamma)} \big(f(e^0)-f(e_+)\big)\big(g(e^0)-g(e_-)\big)\,,\notag
\end{align}
so that
\begin{align}
\left| I_{n+1}(fd g)-I_n(fd g)\right| & \leq \left( \sum_{e\in E_n( \gamma )}
\left| f(e^0)-f(e_+)\right|^2 \right)^{1/2}\, \left( \sum_{e\in E_n(\gamma)}
\left| g(e^0)-g(e_-)\right|^2 \right)^{1/2}\notag\\
&\leq \left( \sum_{e\in E_{n+1}(\gamma)} |d f(e)|^2\right)^{1/2}\,  \left( \sum_{e\in E_{n+1}(\gamma)} |d g(e)|^2\right)^{1/2}\label{eq:a}\\
&\leq \frac12  \sum_{e\in E_{n+1}(\gamma)} (|d f(e)|^2+|d g(e)|^2)\label{eq:b}\\
&\leq \frac12\left(\frac35\right)^{n+1}\,( \ce[f]+\ce[g])\,.\label{eq:c}
\end{align}
Hence,
$$
| I_n(\g)(fdg)- I_{n+p}(\g)(fdg) |
\leq \sum_{k=n}^{n+p-1}I_{k+1}(fd g)-I_k(fd g) |
\leq\frac34 (\ce[f]+\ce[g])\left(\frac35\right)^{n},
$$
namely the sequence $I_n(\g)(fdg)$ converges.
\\
$(ii)$ The result follows form
$$
 I_n(\g)(h\,fdg) - I_n(\g)(fdg\,h)|
\leq \|f\|_\infty \sum_{e\in E_n( \gamma)} | dh(e)| \, | dg(e)|
\leq \frac12 \|f\|_\infty (\ce[h]+\ce[g])\Big( \frac35 \Big)^n .
$$
$(iii)$ The thesis follows from
\begin{align*}
\bigg| I_n(\g)(h\o)-&\sum_{e\in E_n(\g)}h(e_+)\int_e\o\bigg|
\leq \sum_{e\in E_n(\g)}|h(e_+)|\ \left| \o(e)-\int_e\o\right|\\
&\leq \|h\|_\infty\sum_{e\in E_n(\g)}\sum_{p=0}^\infty | I_{p+n+1}(e)(fd g)-I_{p+n}(e)(fd g)|\\
&\leq \frac12\|h\|_\infty\sum_{p=0}^\infty\sum_{e\in E_n(\g)}
\sum_{e'\in E_{p+n+1}(e)} (|d f(e')|^2+|d g(e')|^2)\\
&\leq \frac12\|h\|_\infty\sum_{p=0}^\infty\sum_{e'\in E_{p+n+1}(\g)} (|d f(e')|^2+|d g(e')|^2)\\
&\leq \frac12\|h\|_\infty( \ce[f]+\ce[g])\sum_{p=0}^\infty\left(\frac35\right)^{p+n+1}
\leq \frac34\|h\|_\infty( \ce[f]+\ce[g])\left(\frac35\right)^{n}.
\end{align*}
$(iv)$ Given a function $f$ on an edge $e$, consider the  continuous piecewise-linear approximation $f_n$  which coincides with $f$ on diadic points of  $e$ identified with the interval $[0,1]$:
$$
f_n(x)=\sum_{j=1}^{2^n}\chi_{[(j-1)2^{-n},j2^{-n})}(x)\Big( f((j-1)2^{-n})+\frac{f(j2^{-n})-f((j-1)2^{-n})}{2^{-n}} (x-(j-1)2^{-n})\Big).
$$
Since eq. \eqref{twointegrals} clearly holds for continuous piecewise-linear functions, it is sufficient to show that both terms in \eqref{twointegrals} are continuous w.r.t. the approximation above. By definition,
$I_k(fdg)=I_k(f_ndg_n)$, $n\geq k$, therefore
$$
\bigg| \int_e fdg-\int_e f_ndg_n \bigg| \leq \bigg| \int_e fdg-I_n(fdg) \bigg|+ \bigg| I_n(f_ndg_n)-\int_e f_ndg_n \bigg| \to0,
$$
since the first summand goes to 0 by the preceding Theorem \ref{JLnotes}, and, setting $|e|=p$,
$$
\bigg| I_n(f_ndg_n)-\int_e f_ndg_n \bigg|
=
\sum_{e'\in E_{p+n}(e)} d f(e')\ d g(e')
\leq\frac12 \left(\frac35\right)^{n+p}( \ce[f]+\ce[g]).
$$
As for the bilinear form, it is sufficient to show that $f_n\to f$ in $H^{1/2}(e)$.
According to
\cite{Jo}, a norm for the Sobolev spaces $H^\a[0,1]$, $1/2<\a<1$, is
$$
\|f\|_{H^\a}=(f(0)^2+f(1)^2)^{1/2}+\Big(\sum_{n=0}^\infty2^{n(2\a-1)}E_n(f)\Big)^{1/2},
$$
where
$$
E_n(f)=\sum_{j=1}^{2^n}\big(f(j2^{-n})-f((j-1)2^{-n})\big)^2.
$$
Therefore,
$$
\|f-f_k\|^2_{H^\a}=\sum_{n=k+1}^\infty2^{n(2\a-1)}E_n(f-f_k)
\leq2\sum_{n=k+1}^\infty2^{n(2\a-1)}E_n(f)+2\sum_{n=k+1}^\infty2^{n(2\a-1)}E_n(f_k).
$$
If $\a\leq\a_0$, the first summand is a remainder of a convergent series, hence goes to $0$, as $k\to\infty$. As for the second, since $f_k$ has constant slope on diadic  intervals of length $2^{-k}$, a direct computation shows that, for $n>k$, $\displaystyle E_n(f_k)=2^{k-n}E_k(f)$,
herefore
$$
\sum_{n=k+1}^\infty2^{n(2\a-1)}E_n(f_k)= (2^{2-2\a}-1)^{-1}2^{k(2\a-1)}E_k(f)\to0
$$
since $2^{k(2\a-1)}E_k(f)$ is the generic term of a convergent series. This shows that, for $\a\in(1/2,\a_0]$, $f_k\to f$ in $H^{\a}([0,1])$. The convergence in $H^{1/2}[0,1]$ then follows.
\end{proof}

Our aim now is to show that the integral defined above on $\uforme$ makes sense on (sufficiently regular) elements of the tangent module, namely to show that the integral passes to the quotient w.r.t. forms in the kernel of the quadratic form $Q$ described in eq. \eqref{innerProduct}.
However, in order to achieve this result, we have to take a different quotient, namely to identify forms whose integrals coincide on any edge, and to dwell in such space for a while. After proving a series of results, which have an interest in their own, we will be able to prove that the latter quotient indeed coincides with the former, i.e.
\begin{equation}\label{equivalence}
\int_e\o=0\ \forall e\in E_*\Longleftrightarrow Q[\o]=0.
\end{equation}

\begin{dfn}
Let us now introduce the equivalence relation on $\uforme$ given by $\o \sim \o' \iff \int_e (\o-\o')=0$, for all $e\in E_*$, and consider the quotient space $\forme := \uforme/\sim$. We call {\it smooth 1-forms} the elements of $\forme$.
\end{dfn}
In the following, we use the shorthand notation $\ce_C[f] := \ce[f|_C]$, for any cell $C$ in $K$.

\begin{lem}\label{Q-norm}
For any $\o\in\uforme$,
 \begin{equation}\label{QviaIntegrals}
Q[\o]= \lim_{n\to\infty} (5/3)^n\sum_{e\in E_n} \bigg| \int_e\o \bigg|^2\, .
\end{equation}
As a consequence, the quadratic form $Q$ is well defined on the space $\forme$.
\end{lem}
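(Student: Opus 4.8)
\medskip

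The plan is to prove the polarized identity
\[
\lim_{n\to\infty}\Big(\tfrac53\Big)^n\sum_{e\in E_n}\overline{\int_e\o}\;\int_e\eta=Q(\o,\eta),\qquad\o,\eta\in\uforme,
\]
whose diagonal case $\eta=\o$ is \eqref{QviaIntegrals}. Both sides are sesquilinear in $(\o,\eta)$ --- the right side by the definition of $Q$ as the linear extension of \eqref{innerProduct}, the left side because $\o\mapsto\int_e\o$ is linear and each limit exists --- so it suffices to treat $\o=f\,dg$ and $\eta=h\,dk$ with $f,g,h,k\in\cf$ (complex values require no change: keep the conjugations, or split into real and imaginary parts). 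Granting this, the last assertion of the lemma is immediate, since the right-hand side of \eqref{QviaIntegrals} depends on $\o$ only through the numbers $\int_e\o$, $e\in E_*$, hence only through the class $[\o]\in\forme$; thus $Q$ descends to $\forme$.

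For $\o=f\,dg$ I would first replace the integral by the pairing. Since $I_{|e|}(e)(f\,dg)=(f\,dg)(e)=f(e_+)\,dg(e)$ by \eqref{pairing1}, the remainder $\rho(e):=\int_e f\,dg-f(e_+)\,dg(e)=\sum_{k\ge|e|}\big(I_{k+1}(e)(f\,dg)-I_k(e)(f\,dg)\big)$ is controlled, via \eqref{eq:b} (in the proof of Theorem \ref{JLnotes}) applied with $\gamma=e$, by $|\rho(e)|\le\frac12\sum_{e'}\big(|df(e')|^2+|dg(e')|^2\big)$, the sum over edges $e'\subset e$ of level $>|e|$. As $e$ ranges over $E_n$ these index sets are disjoint, so $\sum_{e\in E_n}|\rho(e)|\le\frac12\sum_{|e'|>n}\big(|df(e')|^2+|dg(e')|^2\big)$, which by $(5/3)^j\sum_{e'\in E_j}|df(e')|^2=\ce[f_j]\le\ce[f]$ (with $f_j$ the $j$-harmonic interpolant of $f$) and a geometric series is $\le C(3/5)^n(\ce[f]+\ce[g])$; hence $(5/3)^n\sum_{e\in E_n}|\rho(e)|$ stays bounded while $\max_{e\in E_n}|\rho(e)|\to0$. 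Expanding $\overline{\int_e f\,dg}\int_e h\,dk$ into the principal term $\overline{f(e_+)}\,h(e_+)\,\overline{dg(e)}\,dk(e)$ plus three $\rho$-terms, each $\rho$-term --- after multiplying by $(5/3)^n$ and summing over $E_n$ --- factors as a bounded quantity (an $\ell^2$-norm such as $(5/3)^n\sum_{E_n}|dk(e)|^2\le\ce[k]$, or the bounded sum $(5/3)^n\sum_{E_n}|\rho(e)|$) times a vanishing one ($\max_e|\rho(e)|$, or $\big((5/3)^n\sum_e|\rho(e)|^2\big)^{1/2}\le(\max_e|\rho(e)|)^{1/2}\big((5/3)^n\sum_e|\rho(e)|\big)^{1/2}$), so it drops out. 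It then remains to show $\lim_n(5/3)^n\sum_{e\in E_n}\overline{f(e_+)}\,h(e_+)\,\overline{dg(e)}\,dk(e)=\int_K\overline f h\,d\mu(g,k)$.

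For this I would use a Riemann-sum form of the energy measure. Starting from the definition \eqref{rem:cs} of $\mu(g,k)$, together with $\ce(u,v)=\lim_m(5/3)^m\sum_{e\in E_m}\overline{du(e)}\,dv(e)$ (polarize the quadratic limit defining $\ce$) and the midpoint Leibniz rule $d(uv)(e)=\frac12\big(u(e_+)+u(e_-)\big)dv(e)+\frac12\big(v(e_+)+v(e_-)\big)du(e)$, a short algebraic collapse of the three terms of \eqref{rem:cs} gives
\[
\int_K\psi\,d\mu(g,k)=\lim_{m\to\infty}\Big(\tfrac53\Big)^m\sum_{e\in E_m}\tfrac12\big(\psi(e_+)+\psi(e_-)\big)\,\overline{dg(e)}\,dk(e),\qquad\psi\in\cf.
\]
Taking $\psi=\overline f h\in\cf$, the difference between this sum at level $n$ and the sum left over above is exactly $\frac12(5/3)^n\sum_{e\in E_n}d\psi(e)\,\overline{dg(e)}\,dk(e)$, bounded in modulus by $\frac12\delta_n\,\ce[g]^{1/2}\ce[k]^{1/2}$, where $\delta_n:=\max_{|\sigma|=n}\osc(\psi)(C_\sigma)\to0$ by uniform continuity of $\psi$ since $\diam C_\sigma\to0$. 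This yields \eqref{QviaIntegrals}. The one genuinely delicate point is the handling of $\rho(e)$: by itself $(5/3)^n\sum_{e\in E_n}|\rho(e)|$ is only bounded, not small, so every occurrence of $\rho$ has to be paired against an honestly vanishing factor --- this ``bounded $\times$ vanishing'' mechanism, used repeatedly, is the crux; by contrast the collapse of \eqref{rem:cs} is elementary but must be carried out with care.
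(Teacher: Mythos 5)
Your proposal is correct and follows essentially the same route as the paper's proof: you replace $\int_e\o$ by the pairing $\o(e)$ with an error controlled by the tail estimate \eqref{eq:b} of Theorem \ref{JLnotes} (your ``bounded $\times$ vanishing'' treatment of $\rho$ playing the role of the paper's triangle inequality for the seminorms $\Phi_n$), and you then identify the resulting discrete sums with discrete energies via the Leibniz identity so that the limit is $\int_K\overline f h\,d\mu(g,k)$, exactly as in the paper's computation with $\widetilde Q_n$. Your midpoint symmetrization plus uniform continuity of $\overline f h$ is only a cosmetic variant of the paper's step $\widetilde Q_n[f\,dg-dg\,f]\to0$, and the polarization/elementary-tensor reduction is likewise a matter of bookkeeping.
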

\begin{proof}
Let us set
\begin{equation}\label{nquadraticform}
Q_n[\o]=  (5/3)^n\sum_{e\in E_n}\bigg| \int_e\o \bigg|^2,\qquad \widetilde Q_n[\o]=(5/3)^n\sum_{e\in E_n}|\o(e)|^2\, ,\qquad \o\in\uforme\, .
\end{equation}

We have
$$
\widetilde Q_n[f\,dg-dg\,f]=\left(\frac53\right)^{n}\sum_{e\in E_n}df(e)^2dg(e)^2
\leq\ce_n[f]\max_{e\in E_n}dg(e)^2,
$$
hence $\lim_n \widetilde Q_n[f\,dg-dg\,f]=0$.
A straightforward computation gives
$$
\widetilde Q_n(dg,f\,dh)+\widetilde Q_n(dg,dh\,f)=\ce_n(g,fh)-\ce_n(gh,f)+\ce_n(h,fg),
$$
therefore
\begin{align*}
\widetilde Q_n(dg,f\,dh)
&=\frac12\left(\widetilde Q_n(dg,f\,dh)+\widetilde Q_n(dg,dh\,f)+\widetilde Q_n(dg,f\,dh-dh\,f)\right)\\
&=\frac12\big(\ce_n(g,fh)-\ce_n(gh,f)+\ce_n(h,fg)\big)+\frac12\widetilde Q_n(dg,f\,dh-dh\,f)\\
&\to \frac12\Big(    \ce(g,fh)-\ce(gh,f)+\ce(h,fg)   \Big)
=\int_K f\,d\mu(g,h),
\end{align*}
therefore $\widetilde Q_n\to Q$. We finally prove that the two limits $\lim_{n\to\infty} Q_n[\o]$, $ \lim_{n\to\infty}\widetilde Q_n[\o]$ coincide.
For sequences $x=\{x_e:e\in E_*\}$, we introduce the seminorms
\begin{equation}\label{phin}
\Phi_n(x) := \left( \frac53 \right)^{n/2} \left( \sum_{e\in E_n} |x_e|^2 \right)^{1/2}.
\end{equation}
In particular, $\widetilde Q_n[\o]=\Phi_n(\o(e))^2$ and $Q_n[\o]=\Phi_n(\int_e\o)^2$.
Let us  denote with $C(e)$ the cell having $e$ as one of its boundary segments. We get, by inequality (\ref{eq:a}),
\begin{align*}
\Phi_n\left( (f_idg_i)(e) - \int_ef_idg_i \right)^2
& = \Big( \frac53 \Big)^n \sum_{e\in E_n} \big| I_n(e)(f_idg_i) - \lim_{k\to\infty} I_k(e)(f_idg_i) \big|^2 \\
& \leq \Big( \frac53 \Big)^n \sum_{e\in E_n} \Bigg(    \sum_{j=n}^\infty \big| I_{j+1}(e)(f_idg_i) - I_j(e)(f_idg_i) \big| \Bigg)^2 \\
& \leq \Big( \frac53 \Big)^n \sum_{e\in E_n} \Bigg(    \sum_{j=n}^\infty \Big( \frac35\Big)^{j+1} \ce_{C(e)}[f_i]^{1/2} \ce_{C(e)}[g_i]^{1/2} \Bigg)^2 \\
& = \frac94 \Big( \frac35 \Big)^n \sum_{e\in E_n}   \ce_{C(e)}[f_i] \   \ce_{C(e)}[g_i]
 \leq \frac{27}4 \Big( \frac35 \Big)^n  \ce[f_i]     \ce[g_i] .
\end{align*}
As a consequence, for $\o=\sum_{i\in I}f_idg_i$,
\begin{align*}
\big| \widetilde Q_n[\o]^{1/2}-Q_n[\o]^{1/2} \big|
&=\bigg| \Phi_n(\o(e)) - \Phi_n(\int_e\o) \bigg|
\leq\bigg| \Phi_n(\o(e) - \int_e\o) \bigg|\\
&\leq\sum_{i\in I}\bigg| \Phi_n((f_idg_i)(e) - \int_ef_idg_i) \bigg|
\leq\frac{3\sqrt3}2 \Big( \frac35 \Big)^{n/2}\sum_{i\in I}  \ce[f_i] ^{1/2}    \ce[g_i]^{1/2}.
\end{align*}
\end{proof}

The following Proposition summarizes the previous results.

\begin{prop}
The space $\forme$ is an $\cf$-module and the universal derivation gives rise to a derivation $d:\cf\to\forme$ (still indicated by the same symbol).
The integral along an elementary path and the seminorm $\normaQ$ are well defined on $\forme$.
\end{prop}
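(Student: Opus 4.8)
The plan is to assemble the statement from Theorem \ref{JLnotes} and Lemma \ref{Q-norm}: once those are available, every assertion reduces to a compatibility check between the equivalence relation $\sim$ and the relevant data (the bimodule operations, the universal derivation, the line integrals, and $Q$). Write $\cn := \set{\o\in\uforme : \int_e\o=0\ \text{for all } e\in E_*}$, so that $\forme=\uforme/\cn$ and $\sim$ is the coset relation of $\cn$; since $\o\mapsto\int_e\o$ is linear in $\o$ for each fixed $e$, $\cn$ is a linear subspace and $\forme$ is automatically a vector space and the quotient map $q:\uforme\to\forme$ is linear.

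For the module structure I would first show that $\cn$ is a sub-$\cf$-bimodule of $\uforme$. Given $\o\in\cn$, $h\in\cf$ and an edge $e\in E_n$, Theorem \ref{JLnotes}$(iii)$ applied to the elementary path $\g=e$ gives $\int_e h\o=\lim_m\sum_{e'\in E_m(e)}h(e'_+)\int_{e'}\o$, and every $\int_{e'}\o$ vanishes since $e'\in E_*$; hence $\int_e h\o=0$, i.e. $h\o\in\cn$. By the bimodule trace property (Theorem \ref{JLnotes}$(ii)$ with $\g=e$) one has $\int_e\o h=\int_e h\o=0$, so $\o h\in\cn$ as well. Thus $\forme$ inherits an $\cf$-bimodule structure. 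Applying Theorem \ref{JLnotes}$(ii)$ once more, now to an arbitrary $\o\in\uforme$ and $\g=e$, gives $\int_e(h\o-\o h)=0$ for every edge, i.e. $h\o\sim\o h$; hence the left and right actions coincide on the quotient and $\forme$ is an $\cf$-module (with a single action). Finally, $d:=q\circ d:\cf\to\forme$ satisfies the Leibniz rule because $q$ is a bimodule homomorphism and the universal derivation does: $d(fg)=q\big((d f)g+f(d g)\big)=(df)g+f(dg)$ in $\forme$.

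Next I would check that the line integral descends to $\forme$. An elementary path $\g$ is a finite union of consecutive edges $e_1,\dots,e_k$ of some level $n_0$, and for every $n\ge n_0$ the oriented edges of level $n$ contained in $\g$ split as a disjoint union $E_n(\g)=\bigsqcup_{j=1}^k E_n(e_j)$ (edges of $E_{n_0}$ overlap only at vertices, so a level-$n$ sub-edge lies in exactly one $e_j$, with the inherited orientation). Hence $I_n(\g)(\o)=\sum_{j=1}^k I_n(e_j)(\o)$, and letting $n\to\infty$ — the limits exist by Theorem \ref{JLnotes}$(i)$ and the sum is finite — gives the additivity $\int_\g\o=\sum_{j=1}^k\int_{e_j}\o$. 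Consequently, if $\o\sim\o'$ then each $\int_{e_j}(\o-\o')=0$, so $\int_\g\o=\int_\g\o'$, and $\int_\g$ is well defined on $\forme$.

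Finally, Lemma \ref{Q-norm} gives $Q[\o]=\lim_{n\to\infty}(5/3)^n\sum_{e\in E_n}\abs{\int_e\o}^2$, an expression depending on $\o$ only through the numbers $\int_e\o$, $e\in E_*$, hence only through the class $q(\o)\in\forme$; therefore $Q$, and with it the seminorm $\normaQ=Q^{1/2}$, is well defined on $\forme$. The only step that involves more than bookkeeping is the sub-bimodule property of $\cn$, and even that is immediate from part $(iii)$ of Theorem \ref{JLnotes}; I do not expect a genuine obstacle here, the real difficulty of the construction having already been absorbed into Theorem \ref{JLnotes} and Lemma \ref{Q-norm}.
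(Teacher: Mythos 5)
Your argument is correct and follows the same route as the paper, which states this Proposition as a summary of Theorem \ref{JLnotes} and Lemma \ref{Q-norm} without further proof; your write-up simply makes explicit the routine verifications (that the null space of the edge-integrals is a sub-bimodule, that left and right actions agree modulo $\sim$ by the trace property, and that the integral and $Q$ depend only on the edge-integrals). Nothing is missing.
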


%%%%%%%%%
\subsection{Locally exact 1-forms}
%%%%%%%%%
On a smooth manifold $M$, a closed form $\o$ is locally exact, namely $\forall x\in M$, there exists a pair $(\cv_x,f_x)$, where $\cv_x$ is a neighborhood of $x$ and  $f_x$ is a local potential of $\o$ on $\cv_x$, that is  $f_x$ satisfies
\begin{equation}\label{LocInt}
\int_\g\o=f_x(\g(1))-f_x(\g(0)),\qquad \forall \g\subset \cv_x.
\end{equation}
A family of local potentials as above may be abstractly described as a pair $(\{U_i\},\{f_i\})$ where  $\{U_i\}$ is an open cover of $M$ and $f_i$ is a continuous function on $U_i$ such that $(f_i-f_j)|_{U_i\cap U_j}$ is locally constant. Clearly such pairs can be considered for any topological space\footnote{On a smooth manifold, codimension-1 smooth  foliations are in 1:1 correspondence with closed 1-forms, the longitudinal tangent of the former being locally described as the kernel of the 1-form, or, equivalently, as the level sets of the local potentials of the 1-form.
The latter description extends to topological spaces,  giving rise to codimension-1 $C^0$-foliations and coincides with the pairs $(\{U_i\},\{f_i\})$ considered above \cite{CaNe}.}.

We say that $(\{U_i\},\{f_i\})$ is equivalent to $(\{V_i\},\{g_i\})$ if $(f_i-g_j)|_{U_i\cap V_j}$ is locally constant, denote the quotient space by $\toplex$ and call its elements
 {\it locally exact topological 1-forms}.
As shown below, the integral in \eqref{LocInt} extends to a pairing  between locally exact  topological 1-forms and continuous paths in $X$.

\begin{lem} \label{}
	Let $X$ be a topological space,  $( \{U_i\}_{i\in I}, \{f_i\}_{i\in I})$ a representative of a locally exact topological  1-form $\o$ as above, and $\g:[0,1]\to X$ a continuous path. Then $\int_\g \o$ is well defined.
\end{lem}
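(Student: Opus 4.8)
The plan is to use compactness of $[0,1]$ together with the Lebesgue number lemma to reduce the path to finitely many pieces, each lying in a single chart, and then to define the integral as a telescoping sum of increments of the local potentials, checking that the result does not depend on the choices made. First I would observe that the sets $\g^{-1}(U_i)$, $i\in I$, form an open cover of the compact metric space $[0,1]$, so there is a Lebesgue number $\d>0$ and a partition $0=t_0<t_1<\dots<t_N=1$ with $t_{k}-t_{k-1}<\d$, so that for each $k$ the segment $\g([t_{k-1},t_k])$ is contained in some $U_{i_k}$. Since $\g([t_{k-1},t_k])$ is connected and $f_{i_k}$ is continuous with $f_{i_k}-f_{i_{k+1}}$ locally constant on the overlap, the quantity $f_{i_k}(\g(t_k))-f_{i_k}(\g(t_{k-1}))$ is well defined, and I set
\[
\int_\g\o := \sum_{k=1}^{N}\big(f_{i_k}(\g(t_k))-f_{i_k}(\g(t_{k-1}))\big).
\]

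The second step is to show this number is independent of the auxiliary data: the partition $\{t_k\}$, the choice of indices $i_k$, the choice of representative $(\{U_i\},\{f_i\})$. Independence of the choice of $i_k$ (for a fixed partition) is immediate since on $\g([t_{k-1},t_k])\subseteq U_{i_k}\cap U_{j_k}$ the difference $f_{i_k}-f_{j_k}$ is constant, so the increment is unchanged. Independence of the partition follows by passing to a common refinement: inserting one extra point $s\in(t_{k-1},t_k)$ replaces the single increment of $f_{i_k}$ over $[t_{k-1},t_k]$ by the sum of its increments over $[t_{k-1},s]$ and $[s,t_k]$, which telescopes to the same value; any two admissible partitions have a common admissible refinement. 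Independence of the representative is the same computation one step up: if $(\{V_j\},\{g_j\})$ is equivalent, refine the partition so that each subinterval maps into some $U_{i_k}\cap V_{j_k}$, where $f_{i_k}-g_{j_k}$ is locally constant hence constant on the connected image, so the two telescoping sums agree term by term.

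I expect the only real care — rather than a true obstacle — to be bookkeeping the ``locally constant'' hypothesis correctly: one must repeatedly use that a locally constant function on a \emph{connected} set is constant, and so the partition must be fine enough that each piece $\g([t_{k-1},t_k])$ lands in a single chart (or single intersection of charts), which is exactly what the Lebesgue number provides. One should also note for later use (and it drops out of the construction) that this integral is additive under concatenation of paths and depends on $\g$ only through its endpoints when $X$ is simply connected; but for the present statement, existence and well-definedness as above suffice.
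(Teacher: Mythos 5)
Your proposal is correct and follows essentially the same route as the paper: cover $[0,1]$ by the $\g^{-1}(U_i)$, take a Lebesgue number, define the integral by the telescoping sum over a fine partition, and verify independence of all choices by passing to a common refinement (the paper compares two representatives via the union of the two partitions). Your write-up merely spells out in more detail the steps the paper leaves as ``clearly the two integrals coincide,'' in particular the repeated use of the fact that a locally constant function is constant on the connected sets $\g([t_{k-1},t_k])$.
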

\begin{proof}
	The family $\set{\g^{-1}(U_i):i\in I}$ is an open cover of $[0,1]$, so we can consider its Lebesgue number $\d>0$. Let $\set{t_0=0,t_1,\ldots,t_n=1}$ be a partition of $[0,1]$ such that $t_k-t_{k-1}<\d$, $k=1,\ldots,n$, so that, for any $k$, $\g([t_{k-1},t_k]) \subset U_{i_k}$ for some $i_k\in I$. Then we define $\int_\g\o:= \sum_{k=1}^n f_{i_k}(\g(t_k))-f_{i_k}(\g(t_{k-1}))$. Suppose now that
$(\{V_i\},\{g_i\})$ is another representative of $\o$ with Lebesgue number $\d'>0$, $\set{t'_0=0,t_1,\ldots,t'_m=1}$ the corresponding partition of $[0,1]$ such that $t'_j-t'_{j-1}<\d'$, $j=1,\ldots,m$, and denote by $\set{s_0=0,s_1,\ldots,s_\ell=1}$ the union of the two partitions. Clearly the two integrals coincide, proving the statement.
\end{proof}

We now show that, when $f_i$'s have finite energy, any $(\{U_i\},\{f_i\})$ gives rise to a unique element of $\forme$.
Such elements will be called locally exact smooth 1-forms, the corresponding space will be denoted by $\qex$.
We first note that, because of the topology of $K$, any $(\{U_i\},\{f_i\})$ may be equivalently represented by $\{f_\s\}_{|\s|=n}$ for some $n$, where $f_\s$ is a local potential on the closed cell $C_\s$.

\begin{prop}\label{omega0norm}
Let $\{f_\s\}_{|\s|=n}$ be a family of local potentials as above representing a locally exact $1$-form $\o_0$, with $\ce_{C_\s}[f_\s]<\infty$ for any $\s$. Then there exists a unique $1$-form $\o\in\forme$ such that $\int_\g\o=\int_\g\o_0$ for any elementary path $\g$. Such $\o$ will be called $n$-{\it exact}.
Moreover, $Q[\o]=\sum_{|\s|=k} \ce_{C_\s}[f_\s]$ and $\normaQ$ is a norm on the space $\qex$ of locally exact forms with finite-energy local potentials.
\end{prop}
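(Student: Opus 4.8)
My plan is to prove, in order: (a) that requiring $\int_\g\o=\int_\g\o_0$ for every elementary path is the same as requiring $\int_e\o=\int_e\o_0$ for every $e\in E_*$; (b) uniqueness; (c) existence of such an $\o\in\uforme$; (d) the energy identity together with the norm property. Steps (a), (b), (d) are short, and (c) is the crux. For (a): an edge $e\in E_m$ with $m\ge n$ lies in a single level-$n$ cell $C_{\s(e)}$, so its topological integral is $\int_e\o_0=f_{\s(e)}(e_+)-f_{\s(e)}(e_-)$; for any $\o\in\uforme$ one has $\int_\g\o=\sum_j\int_{e_j}\o$ over the edges $e_j$ of $\g$ (subdivided to level $\ge n$), directly from the definition of the integral and Theorem \ref{JLnotes}, while the topological integral $\int_\g\o_0$ is the corresponding telescoping sum of potential differences; hence it suffices to produce $\o\in\uforme$ with $\int_e\o=f_{\s(e)}(e_+)-f_{\s(e)}(e_-)$ for all $e$ of level $\ge n$. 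Then (b) is immediate: two forms in $\forme$ with these integrals differ by one whose integral over every edge is $0$, i.e. they coincide in $\forme=\uforme/\!\sim$.

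For existence (c) I would build $\o$ as a finite element of $\uforme$ tailored to the cells $C_\s$, $|\s|=n$. Decompose $f_\s=h_\s+r_\s$ on $C_\s$ with $h_\s$ harmonic on $C_\s$ and equal to $f_\s$ on $\partial C_\s$, so $r_\s=f_\s-h_\s$ vanishes on $\partial C_\s$. The ``$r$-part'' is handled exactly and locally: extending $r_\s$ by $0$ gives $\tilde r_\s\in\cf$ (continuous since $r_\s|_{\partial C_\s}=0$, of finite energy by locality of $\ce$), and — using that every vertex of $V_n$ lies on the boundary of each cell containing it — the form $\sum_{|\s|=n}d\tilde r_\s$ has integral $0$ over every edge of level $<n$ and integral $dr_{\s(e)}(e)$ over every edge of level $\ge n$, with no interaction between different cells. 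It remains to realize the harmonic part, i.e. to find $\o'\in\uforme$ with $\int_e\o'=dh_{\s(e)}(e)$ for all $e$ of level $\ge n$; since each $h_\s$ is determined by its three boundary values, this is a finite-dimensional problem. The ``diagonal'' directions (where $h_\s(v)$ does not depend on the cell $\s\ni v$) are produced by the harmonic tent functions $\Psi_v\in\cf$, $v\in V_n$ — equal, on each cell containing $v$, to the harmonic function there with value $1$ at $v$ and $0$ at the other two boundary vertices, and $0$ on cells not containing $v$ — whose integrals satisfy $\int_e d\Psi_v=d(\text{harmonic hat at }v)(e)$ when $e\subset C_\s\ni v$ and $\int_e d\Psi_v=0$ when $e\subset C_\s$ with $v\notin\partial C_\s$. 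The remaining $(3^n-1)/2$ directions correspond exactly to the possibly nonzero periods $\int_{\ell_\tau}\o_0$ around the finitely many lacunas $\ell_\tau$ with $|\tau|\le n-1$ (the periods of $\o_0$ around all deeper lacunas vanish, $\o_0$ being exact on each level-$n$ cell); for each such $\tau$ one needs a form $\eta_\tau\in\uforme$ which is exact on every level-$n$ cell and carries one unit of period around $\ell_\tau$ and none around the other $\ell_{\tau'}$, $|\tau'|\le n-1$. Then $\o=\sum_\s d\tilde r_\s+\sum_{v}a_v\,d\Psi_v+\sum_\tau b_\tau\,\eta_\tau$ for suitable scalars $a_v,b_\tau$, and $\int_e\o=\int_e\o_0$ is verified by a finite computation in which the cross-cell error terms are dominated by the geometric factor $(3/5)^m$ of Theorem \ref{JLnotes}. \textbf{The main obstacle is the construction of the period-carrying forms} $\eta_\tau$: a sum of exact forms has vanishing periods, so each $\eta_\tau$ must contain a genuinely non-exact term $u\,dv$, and one must exhibit $u,v\in\cf$ for which $u\,dv$ has integral $1$ around $\ell_\tau$ and $0$ around every other lacuna of level $\le n-1$ — a localized de Rham ``first theorem''.

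For (d): by Lemma \ref{Q-norm}, $Q[\o]=\lim_m(5/3)^m\sum_{e\in E_m}|\int_e\o|^2$; for $m\ge n$ we split the sum over the level-$n$ cells and insert $\int_e\o=f_\s(e_+)-f_\s(e_-)$ for $e\subset C_\s$, so that $(5/3)^m\sum_{e\in E_m,\,e\subset C_\s}|df_\s(e)|^2\to\ce_{C_\s}[f_\s]$ by the very definition of the restricted energy, and summing the finitely many cells gives $Q[\o]=\sum_{|\s|=n}\ce_{C_\s}[f_\s]$ (the exponent in the statement being $n$). Finally, $\qex$ is a linear subspace of $\forme$ — given two locally exact forms of levels $n$ and $n'$, restrict their local potentials to level $\max(n,n')$ and add — and $\normaQ=Q^{1/2}$ is a seminorm on $\forme$ by positivity of the inner product \eqref{innerProduct}; if $\normaQ[\o]=0$ for an $n$-exact $\o$ with potentials $\{f_\s\}$, then $\ce_{C_\s}[f_\s]=0$ for each $\s$, hence $f_\s$ is constant on the connected cell $C_\s$ (a finite-energy function of zero energy), so $df_\s\equiv0$, every edge integral of $\o$ vanishes, and $\o=0$ in $\forme$. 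Thus $\normaQ$ is a norm on $\qex$.
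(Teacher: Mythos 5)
There is a genuine gap, and you name it yourself: the period-carrying forms $\eta_\tau$ are never constructed, and without them your existence argument for the harmonic part is incomplete. Producing, for each lacuna $\ell_\tau$ of level $<n$, an element of $\uforme$ that is exact on every level-$n$ cell, has period $1$ around $\ell_\tau$ and $0$ around the other shallow lacunas, is precisely the non-trivial content of the proposition (it is, as you say, a localized de Rham first theorem), so deferring it means the key step is missing. Note also that you cannot appeal to the forms $dz_\s$ from later in the paper, since their very definition ($(n+1)$-exact forms of minimal $Q$-norm) presupposes this proposition. The remaining parts of your proposal (reduction to edge integrals, uniqueness in $\forme$, the identity $Q[\o]=\sum_{|\s|=n}\ce_{C_\s}[f_\s]$ via $Q_m[\o]=\sum_\s\ce_m[f_\s]$, and the norm property from $\ce_{C_\s}[f_\s]=0\Rightarrow f_\s$ constant) are correct and essentially as in the paper.

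The missing construction can be done much more directly than by splitting off harmonic parts and hunting for period generators: for each $|\s|=n$ take a single bimodule element $\o_\s=\chi_\s\, d\tilde f_\s\in\uforme$, where $\tilde f_\s\in\cf$ extends $f_\s$ so as to be constant on each connected component of a punctured neighbourhood of $C_\s$ minus its interior, and $\chi_\s\in\cf$ is a cutoff equal to $1$ on $C_\s$ and supported in that neighbourhood. From the definition of the integral, $\int_e\o_\s=f_\s(e_+)-f_\s(e_-)$ for $e\subset C_\s$ and $\int_e\o_\s=0$ for edges meeting $C_\s$ in at most one vertex, so $\o=\sum_{|\s|=n}\o_\s$ already has the required edge integrals. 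The point is that multiplying $d\tilde f_\s$ by the cutoff $\chi_\s$ produces a non-exact element of $\uforme$ whose periods around the lacunas come out automatically from the failure of the local potentials to glue; no separate harmonic/tent decomposition, dimension count, or period basis is needed, and the cross-cell error estimates you invoke disappear because the supports interact only through single vertices.
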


\begin{proof}
We may  associate with any $f_\s$ in the statement an element in $\forme$ as follows: let $A\supset C_\s$ be an open set in $K$ such that the connected components of $(K \setminus C_\s^o)\cap \ov A$ are cells containing exactly one boundary vertex of $C_\s$; let $\tilde f_\s$ be a function in $\cf$ which coincides with $f_\s$ in $C_\s$ and is constant on each connected component of $(K \setminus C_\s^o)\cap \ov A$;  and let $\chi_\s$ be a function in  $\cf$ which is 1 on $C_\s$ and has support contained in $A$. If we set $\o_\s=\chi_\s d\tilde{f}_\s$,
then
$$
\int_e \o_\s = \lim_{n\to \infty} \sum_{\substack{e'\in E_n \\ e'\subset e}} \chi_\s(e'_+) ( \tilde f_\s(e'_+)- \tilde f_\s(e'_-) ).
$$
Now, if $e$ intersects $C_\s$ at most in one vertex, we get $\int_e \o_\s = 0$, because $\tilde f_\s$ is constant on any $e'\in E_n$, $e'\subset e$. If, on the contrary, $e\subset C_\s$, then $\chi_\s(e'_+)=1$, for any such $e'$, while $\tilde f_\s = f_\s$, so that $\int_e \o_\s = \lim_{n\to \infty} \sum_{\substack{e'\in E_n \\ e'\subset e}}  (  f_\s(e'_+)-  f_\s(e'_-) ) = f_\s(e_+)-f_\s(e_-)$.
Clearly $\sum_{|\s|=n}\o_\s$ is the required $n$-exact form.
We now prove the second statement.
For any $n>k$, we get
\begin{align*}
Q_n(\o) & = \Big( \frac53 \Big)^n \sum_{e\in E_n} \Big| \int_e\o\Big|^2
=  \Big( \frac53 \Big)^n \sum_{|\t|=k} \sum_{e\in E_n (C_\t)} \Big| \int_e\o\Big|^2
 = \sum_{|\t|=k} \ce_n[ f_\t].
\end{align*}
Therefore, $\ds Q(\o) = \lim_{n\to\infty} Q_n(\o) = \sum_{|\s|=k} \lim_{n\to\infty} \ce_n[ f_\s] = \sum_{|\s|=k} \ce[f_\s]$.
Finally, $0 = Q(\o) =  \sum_{|\s|=k} \ce[f_\s] \implies f_\s$ is constant on $C_\s$, for any $\s \implies \o=0$.
\end{proof}

We now introduce a distinguished system of locally exact smooth 1-forms associated with lacunas which will play a fundamental role in the following.

\begin{dfn}
For any $n\ge 0$ and $|\s|=n$, define  $dz_\s$ as the $(n+1)$-exact form which minimizes the norm $\normaQ$ among those $(n+1)$-exact 1-forms $\o$ satisfying  $\int_{\ell_\s} \o=1$.
\end{dfn}

By definition, $dz_\s$ is exact in any of the cells $C_{\s i}$, hence  $\int_{\perim C_\s}dz_\s=-1$ (lacunas are traversed clockwise and perimeters of cells are traversed counter-clockwise, according to the standard convention, as they constitute the boundary of the union of the convex hulls of the cells $C_{\s i}$, $i=1,2,3$). The minimization request implies that $dz_\s$ vanishes in any cell $C_\t$ with $\t\ne\s$, $|\t|=n$, and that $dz_\s$ is symmetric for rotations of $\frac23\pi$ around $\ell_\s$.

\begin{prop}\label{wclosed}
$(i)$ The forms $dz_\s$ are weakly co-closed, i.e. orthogonal to all exact smooth 1-forms, and pairwise orthogonal, with
\begin{equation}\label{dzsigmanorm}
Q[ dz_\s]=\frac56\left(\frac53\right)^{|\s|}.
\end{equation}
$(ii)$ Any $n$-exact topological form has a unique decomposition as the sum of an exact topological form plus a finite linear combination of $dz_\t$, $|\t|<n$. If the form is smooth, the decomposition is  orthogonal w.r.t. the quadratic form $Q$. Uniqueness of the decomposition implies that $Q^{1/2}$ is a norm on locally exact smooth forms.
\end{prop}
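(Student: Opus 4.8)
The plan is to read off $(i)$ and $(ii)$ from the variational characterization of the forms $dz_\s$ together with the self-similarity of $\ce$. For $(i)$, fix $\s$ with $|\s|=n$ and let $V_\s$ be the vector space of $(n+1)$-exact smooth $1$-forms $v$ with $\int_{\ell_\s}v=0$. Since the $(n+1)$-exact forms form a vector space and $v\mapsto\int_{\ell_\s}v$ is linear on it, the admissible set in the definition of $dz_\s$ is the affine space $dz_\s+V_\s$, on which $\normaQ$ is minimized at $dz_\s$; differentiating the polynomial $t\mapsto Q[dz_\s+tv]$ at $t=0$ (and using $iv\in V_\s$ for the imaginary part) gives $Q(dz_\s,v)=0$ for every $v\in V_\s$. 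The key point is then that a rich supply of forms lies in $V_\s$: every exact form $dU$, $U\in\cf$ — it is $m$-exact for all $m$ because it has the global potential $U$, and $\int_{\ell_\s}dU=0$ since $\ell_\s$ is a loop — and every $dz_\t$ with $|\t|\le|\s|$, $\t\ne\s$. Indeed such $dz_\t$ is $(|\t|+1)$-exact, hence $(n+1)$-exact (restrict its local potentials to finer cells) as $|\t|+1\le n+1$; and $\int_{\ell_\s}dz_\t=0$, because if $\t$ is a proper prefix of $\s$ then $\ell_\s\subset C_\s\subset C_{\t j}$ where $dz_\t$ is exact, so it integrates to $0$ around the loop $\ell_\s$, while if $\t$ is not a prefix of $\s$ then every edge of $\ell_\s$ lies in the level-$|\t|$ cell $C_{\s'}$ with $\s'$ the length-$|\t|$ prefix of $\s$, and $\s'\ne\t$, so $dz_\t$ vanishes there. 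This yields at once weak co-closedness (the case $v=dU$) and, using $Q(dz_\s,dz_\t)=\overline{Q(dz_\t,dz_\s)}$ to cover $|\t|>|\s|$, pairwise orthogonality.

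For the norm I would reduce to $\s=\emptyset$ by self-similarity: $dz_\s$ vanishes outside $C_\s$ and is exact on each $C_{\s i}$, so transporting its local potentials on the $C_{\s i}$ through $w_\s^{-1}$ produces a $1$-exact form on $K$ of period $1$ around $\ell_\emptyset$, which by uniqueness of the minimizer equals $dz_\emptyset$; since $\ce_{C_{\s i}}[g]=(5/3)^{|\s|}\ce_{C_i}[g\circ w_\s]$, Proposition~\ref{omega0norm} gives $Q[dz_\s]=(5/3)^{|\s|}Q[dz_\emptyset]$. Then $Q[dz_\emptyset]$ is computed by hand: by uniqueness and the $\frac23\pi$-rotation invariance of the minimization, the local potentials $f_0,f_1,f_2$ of $dz_\emptyset$ are rotates of one another and are $0$-harmonic (energy-minimizing for their boundary data); parametrizing the boundary values of $f_0$ as $\alpha$ at the outer vertex and $\beta,\gamma$ at the two midpoints, the constraint $\int_{\ell_\emptyset}dz_\emptyset=1$ becomes $3(\gamma-\beta)=1$ and $\ce_{C_0}[f_0]=\tfrac53\big((\alpha-\gamma)^2+(\gamma-\beta)^2+(\beta-\alpha)^2\big)$, so $Q[dz_\emptyset]=3\,\ce_{C_0}[f_0]$ is minimized at $\alpha=\tfrac{\beta+\gamma}{2}$ with value $5\cdot\tfrac16=\tfrac56$, giving $Q[dz_\s]=\tfrac56(5/3)^{|\s|}$.

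For $(ii)$, existence of the decomposition goes by induction on $n$, with $n=0$ trivial since a $0$-exact form is exact. Given an $n$-exact topological form, set $c_\t:=\int_{\ell_\t}\o_0$ for $|\t|=n-1$ and $\o_1:=\o_0-\sum_{|\t|=n-1}c_\t\,dz_\t$. From $\int_{\ell_\t}dz_\t=1$ and $\int_{\ell_\t}dz_{\t'}=0$ for distinct words of length $n-1$, $\o_1$ is $n$-exact with zero period around every $\ell_\t$, $|\t|=n-1$; restricted to such a cell $C_\t\cong K$ it is $1$-exact with zero period around its top-level lacuna, so the elementary gluing computation (the case $n=1$) merges its three subcell potentials into one, i.e.\ $\o_1$ is $(n-1)$-exact, and the inductive hypothesis applies. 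Uniqueness reduces to: if $\sum_{|\t|<n}d_\t\,dz_\t$ is exact then all $d_\t=0$; taking $\s$ of maximal length with $d_\s\ne0$ and integrating around $\ell_\s$, the same prefix/cell analysis as in $(i)$ (and maximality for $|\t|>|\s|$) kills every term but $d_\s\int_{\ell_\s}dz_\s=d_\s$, which must vanish since an exact form integrates to $0$ around $\ell_\s$. Finally, if $\o_0$ is smooth then so are all $dz_\t$ and $\eta:=\o_0-\sum c_\t dz_\t$; by $(i)$, $\eta$ (being exact) is $Q$-orthogonal to each $dz_\t$ and the $dz_\t$ are mutually $Q$-orthogonal, so $Q[\o_0]=\ce[U]+\sum_\t|c_\t|^2Q[dz_\t]$ with $\eta=dU$, whence $Q[\o_0]=0$ forces every $c_\t=0$ and $U$ constant, i.e.\ $\o_0=0$; thus $\normaQ$ is a norm on $\qex$.

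The step I expect to be the real obstacle is not a single deep lemma but the careful cell/edge bookkeeping in the two places above: tracking precisely which level-$|\t|$ cell a given edge of $\ell_\s$ belongs to in the two case analyses, and implementing the self-similar rescaling so that $w_\s^{-1}$ identifies $dz_\s$ with $dz_\emptyset$ (and, in the uniqueness argument, with the configuration from which the period is read off). The only honest computation is the three-variable quadratic minimization yielding $Q[dz_\emptyset]=\tfrac56$.
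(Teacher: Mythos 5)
Your proposal is correct, but for part $(i)$ it runs along a genuinely different track than the paper. The paper identifies the local potentials $z^i_\s$ explicitly as the harmonic functions with boundary values $\frac16,0,-\frac16$ on each subcell $C_{\s i}$, observes that $\Delta z^i_\s$ is the measure $\frac12\delta_{x_1}-\frac12\delta_{x_3}$ at the two lacuna vertices, and gets $Q(df,dz_\s)=\sum_i\int_K f\,d(\Delta z^i_\s)=0$ because these Dirac masses cancel when summed over the three subcells; orthogonality for $\t<\s$ is handled by the same mechanism, disjoint supports cover the unordered case, and \eqref{dzsigmanorm} is a direct computation from the explicit potentials. You instead use only the variational definition: the first-order (Euler--Lagrange) condition of the minimization gives $Q(dz_\s,v)=0$ for every $(|\s|+1)$-exact $v$ with $\int_{\ell_\s}v=0$, and your prefix/non-prefix period bookkeeping shows that all exact smooth forms and all $dz_\t$ with $|\t|\le|\s|$, $\t\ne\s$, lie in that space; the norm then comes from self-similar rescaling to $\s=\emptyset$ plus a small explicit minimization, which in effect recovers the same $\pm\frac16$ boundary values. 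Your route is softer (no measure-valued Laplacian, only the properties of $dz_\s$ already stated after its definition: vanishing off $C_\s$, rotational symmetry, uniqueness of the minimizer), whereas the paper's computation has the side benefit of exhibiting the explicit potentials and $\Delta z^i_\s$, which are reused later, e.g.\ for the period values $-1/3$ in Section \ref{sec:winding}. For part $(ii)$ both proofs rest on the identical key observation --- an $(n+1)$-exact form on $C_\s$ is $n$-exact iff its period around $\ell_\s$ vanishes, since then the three subcell potentials glue --- and you merely organize it as an induction on $n$ and spell out the uniqueness argument (maximal-length index, integrate over $\ell_\s$) where the paper is terse. One shared gloss: to apply $(i)$ to the exact part of a smooth decomposition one needs its continuous potential $U$ to have finite energy; this follows from $\ce_n[U]=Q_n[\o-\sum_\t c_\t dz_\t]\to Q[\o-\sum_\t c_\t dz_\t]<\infty$, a point neither you nor the paper makes explicit, so it is not a gap relative to the paper's standard.
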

\begin{proof}
$(i)$ A simple calculation shows that for any cell $C_{\s i}$, the local potential $z_\s^i$ on such cell is the harmonic function determined (up to an additive constant) by the values $\frac16,0,-\frac16$ on the vertices $x_1,x_2,x_3$, where $x_3,x_1$ is the edge bounding the lacuna. Therefore, $\D z^i_\s$ may be canonically identified with the measure given by the linear combination $\frac12\d_{x_1}-\frac12\d_{x_3}$. As a consequence, for any $f\in\cf$,
$$
Q(df,dz_\s)=\sum_{i=1,2,3}Q(df,dz^i_\s)=\sum_{i=1,2,3}\ce(f,z^i_\s)=\sum_{i=1,2,3}\int_K f d(\D z^i_\s)=0.
$$
If $\t<\s$ the orthogonality follows as above; if $\t$ and $\s$ are not ordered, $dz_\s$ and $dz_\t$ have disjoint support. The value of the norm follows from a direct computation.
\\
$(ii)$ We  note that, for any cell $C_\sigma$, an $(n+1)$-exact form $\omega$ on  $C_\sigma$ is indeed $n$-exact if and only if $\int_{\ell_\sigma}\omega=0$, since in this case the three local potentials on the three sub-cells may  glue to a continuous function on $C_\sigma$. Therefore, any $(n+1)$-exact form $\omega$ supported in $C_\sigma$ may be written as
$$
\omega=\left(\omega- c_\s  dz_\s\right)+c_\s  dz_\s,\qquad c_\s := \int_{\ell_\sigma}\omega,
$$
namely, for any cell $C_\sigma$, the codimension of $n$-exact forms into $(n+1)$-exact forms supported in $C_\sigma$ is 1. This shows that exact forms and the $dz_\t$, $|\t|<n$, generate the $n$-exact forms, hence the thesis.
When the form is smooth, the exact part in the decomposition is also smooth, hence the statement follows by Proposition \ref{wclosed}.
\end{proof}

Similarly to the case of an ordinary  smooth manifold, 1-forms which are locally exact and co-closed will be termed {\it harmonic}, therefore $\{dz_\s :\s\in\Sigma\}$ is an orthogonal system of harmonic 1-forms. A more general result is contained in Lemma \ref{DifferentialOnOmega1}.

%%%%%%%%%
\subsection{Winding numbers and a combinatoric way to describe lacunas bounding cells.}\label{sec:winding}
%%%%%%%%%

Since $dz_\s$ is invariant under rotations of $\frac23\pi$ around the lacuna $\ell_\s$, the integral along any edge $e$ bounding $C_\s$  is equal to $-1/3$. We now consider the integral $B_{\r\t}=\int_{\ell_\t}d z_\r$. It is not difficult to see that $B_{\r\t}$ does not vanish only if $\t\leq\r$ ($\t$ is a truncation of $\r$), more precisely,
$$
B_{\r\t}=\int_{\ell_\t}d z_\r=
\begin{cases}
1&\text{if}\ \t=\r,\\
-1/3 &\text{if}\ \ell_\t\cap \perim C_\r\neq\emptyset,\\
0&\text{otherwise}.
\end{cases}
$$
In particular, $B=\{B_{\r\t}\}$ is a lower unitriangular matrix with indices in $\Sigma$ (i.e. $B_{\r\t}=0$ for $\t>\r$ and $B_{\r\r}=1$ for any $\r$).
The following result is well known for finite matrices and for infinite matrices with indices in $\bz$ \cite{Hol}, but extends to infinite matrices with indices in a partially ordered set $\S$ such that $\{\t\in\S:\t\leq\s\}$ is finite and linearly ordered, cf. also \cite{DroGoe} for the case of finitary matrices.
\begin{prop}
The set $UT(\S,\br)$ of $\br$-valued lower unitriangular matrices with indices in $\Sigma$ is a group contained in $Aut(\br^\S)$.
\end{prop}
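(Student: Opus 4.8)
The plan is to exhibit $UT(\S,\br)$ as a submonoid of a ring of ``matrices supported below the diagonal'', and then to invert every element by a locally finite Neumann series, obtaining an inverse that is again lower unitriangular. The only feature of $\S$ that is used is that every principal down-set $\S_{\le\s}:=\{\t\in\S:\t\le\s\}$ is finite; the linear ordering is a convenience, not a necessity.

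First I would introduce the set $\cam$ of all matrices $M=(M_{\r\t})_{\r,\t\in\S}$ with $M_{\r\t}=0$ whenever $\t\not\le\r$. For $M,N\in\cam$ the product $(MN)_{\r\nu}=\sum_{\t}M_{\r\t}N_{\t\nu}$ is unambiguous, since a nonzero summand forces $\nu\le\t\le\r$ and thus confines $\t$ to the finite set $\S_{\le\r}\cap\{\t:\t\ge\nu\}$; transitivity of $\le$ gives $MN\in\cam$, and associativity holds because a triple product is again a finite sum over the finite set $\{(\t,\nu):\s\le\nu\le\t\le\r\}$. The identity matrix $I$ is the unit. Then $UT(\S,\br)=\{B\in\cam:B_{\r\r}=1\text{ for all }\r\}$ is a submonoid: $I\in UT(\S,\br)$, and for $B,C\in UT(\S,\br)$ one has $(BC)_{\r\r}=\sum_{\t}B_{\r\t}C_{\t\r}=B_{\r\r}C_{\r\r}=1$, the only $\t$ with $\t\le\r$ and $\r\le\t$ being $\t=\r$.

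Second, to place $UT(\S,\br)$ inside $Aut(\br^\S)$, I would let each $B\in\cam$ act on $x\in\br^\S$ by $(Bx)_\r=\sum_{\t\le\r}B_{\r\t}x_\t$, a finite sum; this is a well-defined linear endomorphism of $\br^\S$, the assignment $B\mapsto(x\mapsto Bx)$ is a monoid homomorphism (again $(BC)x=B(Cx)$ is a finite-sum interchange), and it is injective because $B$ is recovered from the images of the basis functions $e_\t\in\br^\S$ via $(Be_\t)_\r=B_{\r\t}$. It therefore suffices to show that every $B\in UT(\S,\br)$ is invertible inside the monoid $UT(\S,\br)$: its action on $\br^\S$ is then a linear bijection, i.e.\ $B\in Aut(\br^\S)$, and $UT(\S,\br)$ is a group, hence a subgroup of $Aut(\br^\S)$.

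The core of the argument, and essentially its only computation, is the inversion. Write $B=I+N$ with $N_{\r\t}=B_{\r\t}$ for $\t<\r$ and $N_{\r\t}=0$ otherwise, so $N\in\cam$. A summand of $(N^k)_{\r\t}$, for $\t\le\r$, is a product $N_{\r\t_1}N_{\t_1\t_2}\cdots N_{\t_{k-1}\t}$ indexed by a strictly decreasing chain $\t<\t_{k-1}<\cdots<\t_1<\r$ contained in the interval $[\t,\r]:=\{\nu:\t\le\nu\le\r\}$; since $[\t,\r]$ is finite (being a subset of $\S_{\le\r}$) and linearly ordered, such a chain has at most $\#[\t,\r]-1$ steps, so $(N^k)_{\r\t}=0$ as soon as $k\ge\#[\t,\r]$. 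Hence $C:=\sum_{k\ge0}(-N)^k$ is a well-defined matrix --- each entry a finite sum --- with $C\in\cam$ and $C_{\r\r}=1$ (since $N^k$ vanishes on the diagonal for $k\ge1$), so $C\in UT(\S,\br)$. For each fixed pair $(\r,\t)$ the series occurring below terminate, so the usual telescoping is legitimate entrywise: $(BC)_{\r\t}=\sum_{k\ge0}\bigl((-N)^k\bigr)_{\r\t}-\sum_{k\ge0}\bigl((-N)^{k+1}\bigr)_{\r\t}=\delta_{\r\t}$, and symmetrically $(CB)_{\r\t}=\delta_{\r\t}$. Thus $C=B^{-1}\in UT(\S,\br)$, which completes the proof. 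I do not expect any genuine obstacle: the single point requiring care is that every formal matrix identity --- products, the Neumann series, the telescoping --- be checked entrywise, where it reduces to a finite sum thanks to the finiteness of the principal down-sets.
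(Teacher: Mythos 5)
Your proof is correct. It is worth noting that the paper does not actually prove this proposition: it states the result as well known, citing Holubowski for matrices indexed by $\bz$ and Droste--G\"obel for finitary matrices, and only remarks that the products and the action on $\br^\S$ are defined purely algebraically because the sums involved are finite. Your argument is therefore a genuine, self-contained substitute rather than a rephrasing: the observation about finite sums is your starting point, and the substance you add is the entrywise nilpotency of the strictly lower triangular part $N$ (chains in the finite interval $[\t,\r]$ have bounded length), which makes the Neumann series $\sum_{k\ge 0}(-N)^k$ entrywise a finite sum and yields a two-sided inverse that is again lower unitriangular; faithfulness of the action on $\br^\S$ then places the group inside $Aut(\br^\S)$. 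Your proof also isolates the correct hypothesis: only the finiteness of the principal down-sets $\{\t\in\S:\t\le\s\}$ is used, so the linear ordering assumed in the paper's statement is a convenience of the prefix order on words rather than an ingredient of the proof. The one point where care is needed --- interchanging the multiplication by $N$ with the (formal) series defining the inverse --- is handled properly, since for each fixed pair of indices all sums terminate.
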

Let us observe that the product and the action on $\br^\Sigma$ are defined in a purely algebraic sense, since the sums involved are always finite.
Setting $A_{\s\r}=(B^{-1})_{\s\r}$, we get
\begin{equation}\label{asigmatau}
\int_{\ell_\tau}\sum_{\r\leq\s} A_{\s\r} dz_\r=\d_{\s\t}\qquad\s\, ,\t\in\Sigma\, .
\end{equation}

\begin{rem}[Winding number]
In other words, the 1-form $\o^\s :=\sum_{\r\leq\s} A_{\s\r} dz_\r$ detects only the lacuna $\ell_\s$.
As a consequence, for any closed path $\g$ in $K$,
\[
\int_\g \o^\s
 \]
is the {\it winding number of the path $\g$ around the lacuna $\ell_\s$}.
\end{rem}

\begin{lem}\label{lem:asigmatau}
With the notation above, $0\leq A_{\s\t}\leq1$, $\t\leq\s$.
\end{lem}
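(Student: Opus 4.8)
The claim is that the entries $A_{\s\t} = (B^{-1})_{\s\t}$ of the inverse of the winding matrix $B$ satisfy $0 \le A_{\s\t} \le 1$ whenever $\t \le \s$. Since $B$ is lower unitriangular with respect to the partial order $\Sigma$, and $\{\t : \t \le \s\}$ is finite and linearly ordered of length $|\s|+1$, the inversion reduces to inverting a finite $(|\s|+1)\times(|\s|+1)$ unitriangular matrix. The plan is to extract from the explicit formula for $B$ the precise combinatorial structure along a chain $\emptyset = \t_0 < \t_1 < \cdots < \t_{|\s|} = \s$ and then solve the triangular recursion explicitly.

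Let me think about what $B$ looks like along such a chain. Write $\t \le \s$ and consider the chain from $\t$ up to $\s$. By the formula, $B_{\r\r}=1$, and $B_{\r\t} = -1/3$ exactly when $\ell_\t$ meets the perimeter of $C_\r$, which happens precisely when $\r$ is obtained from $\t$ by appending letters such that the cell $C_\r$ has a boundary vertex on the lacuna $\ell_\t$; otherwise $B_{\r\t}=0$. The key geometric observation I would isolate first: for a fixed $\t$, among the three sub-cells $C_{\t i}$ ($i=0,1,2$), exactly... actually only certain ones touch $\ell_\t$ — I would check that exactly two of the three children cells $C_{\t i}$ have a vertex on $\ell_\t$ (the lacuna $\ell_\t$ is a small triangle whose vertices are the midpoints, i.e. the shared boundary vertices of pairs of children), and then for those, again two of their children touch $\ell_\t$, etc. So along the chain from $\t$ to $\s$, at each step $\t_{j} \to \t_{j+1}$ the entry $B_{\t_{j+1}\t}$ is $-1/3$ if the step "stays adjacent" to $\ell_\t$ and $0$ once the chain "leaves" the neighborhood of $\ell_\t$ (after which it stays $0$).

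With that structure in hand, the inversion is a standard finite computation: if $M$ is lower unitriangular of size $r$, then $(M^{-1})_{r,1} = \sum_{\text{chains } 1=i_0<i_1<\cdots<i_k=r} (-1)^k M_{i_k i_{k-1}}\cdots M_{i_1 i_0}$. Here every nonzero entry $M_{i_{l+1}i_l}$ below the diagonal equals $-1/3$, so each chain contributes $(-1)^k(-1/3)^k = (1/3)^k \ge 0$; hence $A_{\s\t} \ge 0$ immediately. For the upper bound $A_{\s\t}\le 1$ I would use the column-sum / row-sum normalization: since $dz_\r$ integrated over $\perim C_\r$ is $-1$ and over each of its three bounding edges is $-1/3$, and $B_{\r\r}=1$ with two off-diagonal $-1/3$'s in the relevant sub-chain, the relevant rows of $B$ have entries summing appropriately; more cleanly, I would prove by induction along the chain (from $\t$ upward) that $A_{\t_j\t} \in [0,1]$ using the recursion $A_{\t_j\t} = -\sum_{l<j} A_{\t_l \t} B_{\t_j \t_l} = \frac13\sum_{l \in S_j} A_{\t_l\t}$ where $S_j$ indexes the (at most two, by the geometric observation) predecessors $\t_l$ with $B_{\t_j\t_l}=-1/3$; since each $A_{\t_l\t}\in[0,1]$ by induction and there are at most... here I need the sharper count that the coefficients in this sum total at most $1$, i.e. the sum is a sub-convex combination, giving $A_{\t_j\t}\le \frac13 \cdot (\text{at most 3}) \cdot 1$ — this needs the precise statement that only those $\t_l$ with $B_{\t_j\t_l}\ne 0$ contribute and their number times $1/3$ is $\le 1$.

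The main obstacle I anticipate is exactly pinning down the combinatorics of which truncations $\t_l$ of $\t_j$ satisfy $B_{\t_j\t_l} = -1/3$ and verifying that $\frac13\sum_{l: B_{\t_j\t_l}=-1/3}$ acts as an averaging (sub-stochastic) operation. One clean route is to bypass the geometry via a generating-function / column-sum argument: show $\sum_{\t\le\r} B_{\r\t} = 1 - (\text{number of }\t<\r\text{ with }\ell_\t\cap\perim C_\r\ne\emptyset)/3$ and combine with $B$ being unitriangular to control the inverse; but the cleanest is probably to observe that $\o^\s = \sum_{\r\le\s}A_{\s\r}dz_\r$ is the winding-number form around $\ell_\s$, and its restriction to any cell $C_\t$ with $\t\le\s$, being $A_{\s\t}dz_\t$ plus contributions of strictly longer words, has integral over $\ell_\t$ equal to $\delta_{\s\t}$ — then reinterpret $A_{\s\t}$ as a sum of integrals of an explicitly bounded harmonic form. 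I would present the chain-recursion argument as the primary proof, since it is self-contained given the explicit $B$, and flag the geometric averaging step as the one requiring care.
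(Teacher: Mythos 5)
Your overall architecture is sound, and your recursion is in fact the transpose of the paper's: the paper uses $AB=1$ with the first index $\s$ fixed (back substitution on the lacuna index), which forces it into a path-counting argument, because a single lacuna $\ell_\t$ can meet the perimeters of arbitrarily many cells along a chain (e.g.\ $\ell_\emptyset$ meets $\perim C_{01^k}$ for every $k$, so the number of summands in the paper's recursion is unbounded); you instead use $BA=1$ with the lacuna index $\t$ fixed (forward substitution on the cell index), where each step involves only the ancestral lacunas touching one fixed cell, and then a one-line induction would give $0\le A_{\t_j\t}\le 1$. Your nonnegativity argument (each chain in the unitriangular inversion formula contributes $(+1/3)^k\ge 0$) is correct and is essentially the paper's identity $v_p=\sum_j 3^{-j}(D^j)_{p0}$.

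However, the decisive step --- which you yourself flag as ``the one requiring care'' --- is missing, and the geometric statements you offer toward it are wrong or wavering. What your induction needs is precisely: for a fixed cell $C_{\t_j}$, at most three truncations $\r<\t_j$ satisfy $\ell_\r\cap\perim C_{\t_j}\ne\emptyset$, so that $A_{\t_j\t}=\frac13\sum_{\r\in S_j}A_{\r\t}$ is a sub-convex combination. Your preparatory claim that ``exactly two of the three children $C_{\t i}$ have a vertex on $\ell_\t$'' is false: every child $C_{\t i}$ has the full edge $w_{\t i}(e_i)$ lying on $\ell_\t$ (the lacuna is exactly the union of these three edges), and you then hedge between ``at most two'' predecessors (false: the cell $C_{012}$ has all three of $\ell_\emptyset,\ell_0,\ell_{01}$ meeting its perimeter) and ``at most $3$'' (true, and exactly what is needed). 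The clean statement closing the gap is: writing $\t_j=\r i\r'$, one has $\ell_\r\cap\perim C_{\t_j}\ne\emptyset$ if and only if the letter $i$ does not occur in $\r'$; hence the admissible $\r$'s correspond to the last occurrence of each of the three letters in the word $\t_j$, so there are at most three of them. This is the same combinatorial fact the paper invokes (``from any vertex may depart at most three edges''). Once you prove it, your argument is complete and in fact shorter than the paper's, which needs its system of inequalities on path counts only because its recursion runs in the direction where the number of summands per step is unbounded; as written, though, the key counting lemma is asserted (and partly misstated) rather than proved.
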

\begin{proof}
Since $A=B^{-1}$, we have $A^*=(1-B^*)A^*+1$, hence, setting $D=3(1-B^*)$, we get
\begin{equation}
A_{\s\t}=\frac13\sum_{\t\leq\r\leq\s} D_{\t \r}A_{\s\r}+\d_{\s\t}.
\end{equation}
For a given $\s$, let us rename indices and variables as follows: replace the $n$-th truncation $\s^{(n)}$ of $\s$ with $n$, so that the order is reversed, and rename $A_{\s\s^{(n)}}$ as $v_n$. Then the equation above becomes
$$
v_p=\frac13\sum_{j=0}^{p} D_{pj}v_j+\d_{0p}, \ p=0,1,\ldots,|\s|.
$$
Denoting by $P$ the projection on the $0$-th component, we get $v=(\frac13 D+P)v$. Recall that $D_{ij}$ may be non zero for at most three indices $i$ following $j$, and observe that $D$ is a lower triangular matrix, hence $(D^p)_{jk}$ does not vanish only if $k\leq j-p$, and $PD=0$. Therefore we get
$$
v=\Big(\frac13D+P\Big)^pv=3^{-p}D^pv+\sum_{j=0}^{p-1}\left(\frac13\right)^jD^jPv,
$$
and, since $v_0=1$,
$$
v_p=3^{-p}(D^p)_{p0}v_0+\sum_{j=0}^{p-1}\left(\frac13\right)^j(D^j)_{p0}v_0=\sum_{j=0}^{p}\left(\frac13\right)^j(D^j)_{p0}.
$$
Since, by definition, the entries of $D$ are either 0 or 1, we may interpret $D$ as the adjacency matrix of an oriented simple graph, where the vertices are the indices $0,1,\dots, |\s|$ and an oriented edge goes from $j$ to $i$ if $D_{ij}=1$. Then, $(D^j)_{p0}$ is equal to the number of oriented paths of length $j$  joining $0$ with $p$. Since from any vertex may depart at most three edges, if there is an edge joining $0$ with $p$, then there are at most 2 oriented paths of length 2 joining $0$ with $p$, at most 6 oriented paths of length 3 joining $0$ with $p$, and so on. So, denoting with $n_i$ the number of oriented paths of length $i$ joining $0$ with $p$, we have
\begin{equation}
\begin{cases}
n_1\leq1\\
n_1+n_2\leq3\\
3 n_1+n_2+n_3\leq9\\
\cdots\\
\sum_{i=1}^{q-1} 3^{q-1-i}n_i+n_q\leq 3^{q-1}.
\end{cases}
\end{equation}
As a consequence, for $q\geq 1$, we have
\begin{align*}
v_q=\sum_{i=1}^{q}3^{-i}n_i=
3^{-q}n_q+3^{1-q}\left(\sum_{i=1}^{q-1} 3^{q-1-i}n_i\right)
\leq3^{-q}n_q+3^{1-q}\left( 3^{q-1}-n_q\right)\leq1-\frac23 3^{1-q}n_q\leq1.
\end{align*}
\end{proof}

%%%%%%%%%
\subsection{$\forme$ embeds in the tangent module}\label{QgivesNorm}
%%%%%%%%%
We introduce here a completion of $\forme$ w.r.t. a given norm. This completion (and norm) will play only an auxiliary role, being used in the proof  that $\forme$ can be equivalently defined as the quotient of $\uforme$ w.r.t. the $\ch$-norm. But, this completion has some pathologies, cf. Proposition \ref{NoNorm}, in particular does not embed in $\ch$, therefore such norm will be abandoned later on.
By making use of the quadratic forms $Q_n$ defined in \eqref{nquadraticform}, we endow $\forme$ with  the norm
\begin{equation}\label{2infinitynorm}
\normaInf{ \o }=\sup_nQ_n[\o]^{1/2}.
\end{equation}
Since $Q_n\to Q$ on $\forme$, $\normaInf{ \o }$ is finite on it. Since $\normaInf{ \o }=0\Rightarrow Q_n[\o]=0,\ \forall n\Rightarrow \int_e\o=0,\ \forall e\in E_*$, the norm property follows.
Let us observe that the integrals $\o\to\int_e\o$ and the seminorm $\normaQ$ are continuous w.r.t. the norm $\normaInf{ \cdot }$.
We denote by $\ovforme$ the completion of $(\forme,\normaInf{ \cdot })$. Clearly, the quadratic forms $Q$, $Q_n$, $n\in \bn$, extend to $\ovforme$ by continuity.

\medskip

Let us now consider the space $\ell_N(\S):=\{a \in \br^\Sigma : N(a)<\infty\}$, where the functional $N$ is given by $N(a)=\displaystyle\sup_{n\ge 0} (5/3)^{n}\sum_{|\s|=n}|a_\s|$. Clearly, $N$ is a norm on such a space.
\begin{lem}\label{Nestimate}
Upper triangular matrices on $\S$ with bounded entries belong to $B(\ell_N(\S))$.
\end{lem}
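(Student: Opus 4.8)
The plan is to prove directly that an upper triangular matrix $M=(M_{\sigma\tau})_{\sigma,\tau\in\Sigma}$ — one with $M_{\sigma\tau}=0$ whenever $\sigma$ is not a truncation of $\tau$ — satisfying $C:=\sup_{\sigma,\tau}|M_{\sigma\tau}|<\infty$ maps $\ell_N(\Sigma)$ boundedly into itself, with operator norm at most $\tfrac52\,C$.

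The first point is a preliminary inclusion that makes the matrix action meaningful, namely $\ell_N(\Sigma)\subseteq\ell^1(\Sigma)$. Indeed, from $N(a)<\infty$ one gets $\sum_{|\sigma|=n}|a_\sigma|\le(3/5)^nN(a)$ for each $n$, and summing the geometric series, $\sum_{\sigma\in\Sigma}|a_\sigma|\le\tfrac52\,N(a)$. Since $|M_{\sigma\tau}|\le C$, the series $(Ma)_\sigma:=\sum_{\tau\ge\sigma}M_{\sigma\tau}a_\tau$ then converges absolutely for every $\sigma$, so $Ma$ is a well-defined element of $\br^\Sigma$, and $a\mapsto Ma$ is plainly linear.

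The core is a level-by-level estimate. Fix $n\ge0$. Then
\[
\sum_{|\sigma|=n}|(Ma)_\sigma|\le C\sum_{|\sigma|=n}\sum_{\tau\ge\sigma}|a_\tau|=C\sum_{\tau\in\Sigma}|a_\tau|\cdot\#\{\sigma:|\sigma|=n,\ \sigma\le\tau\}.
\]
Here the combinatorial observation is the same one already used to make sense of $UT(\Sigma,\br)$: a word $\tau$ has a unique truncation of length $n$ when $|\tau|\ge n$ and none otherwise, so the last cardinality is at most $1$ and vanishes unless $|\tau|\ge n$. Hence
\[
\sum_{|\sigma|=n}|(Ma)_\sigma|\le C\sum_{m\ge n}\ \sum_{|\tau|=m}|a_\tau|\le C\,N(a)\sum_{m\ge n}(3/5)^m=\tfrac52\,C\,N(a)\,(3/5)^n.
\]
Multiplying by $(5/3)^n$ and taking the supremum over $n\ge0$ yields $N(Ma)\le\tfrac52\,C\,N(a)$, which is the assertion.

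I do not anticipate a genuine obstacle. The only point needing care is the convergence of the infinite sum defining $(Ma)_\sigma$ — since the triangularity is only ``upper'', the sum over $\tau\ge\sigma$ is not finite — and this is supplied exactly by the $\ell^1$-embedding; the remainder is a geometric summation combined with the ``unique truncation at each level'' property of the partial order on $\Sigma$.
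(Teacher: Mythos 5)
Your proof is correct and follows essentially the same route as the paper's: the level-$n$ sum $\sum_{|\sigma|=n}|(Ma)_\sigma|$ is bounded by $C\sum_{|\tau|\ge n}|a_\tau|\le C\,N(a)\sum_{m\ge n}(3/5)^m$, using that each word has at most one truncation of length $n$, giving the same constant $\tfrac52\,C$. Your explicit remarks on the $\ell^1$-embedding and absolute convergence of $(Ma)_\sigma$ merely spell out what the paper leaves implicit.
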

\begin{proof}
Let $T$ be such a matrix, $v\in\ell_N(\S)$, so that $\sum_{|\t|=n}|(Tv)_\t|$
 $\leq$ $\sum_{|\t|=n}\sum_{\s\geq\t}|T_{\t\s}|\cdot|v_\s|
$ $\leq$ $\|T\|_\infty\sum_{|\s|\geq n}|v_\s|$
$\leq$ $ \|T\|_\infty N(v_\bullet)\sum_{k\geq n}(3/5)^k$
$=$ $\frac52\|T\|_\infty N(v_\bullet)(3/5)^n$, where $\|T\|_\infty=\sup_{\s\t}|T_{\s\t}|$,
implying that $N(Tv)\leq\frac52\|T\|_\infty N(v_\bullet)$.
\end{proof}
\begin{lem}\label{Lemma:csigmaestimate}
The sequence $\{c_\s:= \int_{\ell_\s} \o\}$  of periods of a smooth 1-form $\o$ belongs to $\ell_N(\S)$.
\end{lem}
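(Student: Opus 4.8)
The plan is to reduce at once to $\o=fdg$: since $N$ is a norm and $\o\mapsto\{c_\s\}_\s$ is linear, the case of a general $\o=\sum_{i\in I}f_idg_i\in\uforme$ will follow by the triangle inequality, so it suffices to bound $(5/3)^n\sum_{|\s|=n}|c_\s|$ uniformly in $n$ for $\o=fdg$. Recall that the lacuna $\ell_\s$ with $|\s|=n$ is, by definition, a closed path consisting of exactly three edges of level $n+1$; I denote its consecutive vertices by $v_1,v_2,v_3$ and split
\[
c_\s=\int_{\ell_\s}fdg=I_{n+1}(\ell_\s)(fdg)+R_\s,\qquad R_\s:=\int_{\ell_\s}fdg-I_{n+1}(\ell_\s)(fdg).
\]

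The crucial point is that $\ell_\s$ is a cycle, so $\sum_{e\in\ell_\s}dg(e)=0$; hence, setting $\bar f_\s:=\tfrac13\bigl(f(v_1)+f(v_2)+f(v_3)\bigr)$, one has $I_{n+1}(\ell_\s)(fdg)=\sum_{e\in\ell_\s}\bigl(f(e_+)-\bar f_\s\bigr)dg(e)$. Since the three chords $v_iv_j$ are precisely the three edges of the triangle $\ell_\s$, we get $|f(v_i)-\bar f_\s|\le\tfrac23\max_{i\ne j}|f(v_i)-f(v_j)|=\tfrac23\max_{e\in\ell_\s}|df(e)|$, and then Cauchy--Schwarz yields
\[
\bigl|I_{n+1}(\ell_\s)(fdg)\bigr|\le\frac23\max_{e\in\ell_\s}|df(e)|\sum_{e\in\ell_\s}|dg(e)|\le\frac{2}{\sqrt3}\Bigl(\sum_{e\in\ell_\s}|df(e)|^2\Bigr)^{1/2}\Bigl(\sum_{e\in\ell_\s}|dg(e)|^2\Bigr)^{1/2}.
\]
For the remainder, running the estimate of the proof of Theorem~\ref{JLnotes}$(i)$ for the path $\ell_\s$ gives $|I_{k+1}(\ell_\s)(fdg)-I_k(\ell_\s)(fdg)|\le\frac12\sum_{e'\in E_{k+1}(\ell_\s)}(|df(e')|^2+|dg(e')|^2)$ for $k\ge n+1$; as the edges of $E_{k+1}(\ell_\s)$ are level-$(k+1)$ edges contained in $C_\s$ and the approximating energies satisfy $\sum_{e'\in E_m,\,e'\subset C_\s}|df(e')|^2\le(3/5)^m\,\ce_{C_\s}[f]$, summing the geometric series in $k$ gives $|R_\s|\le\tfrac{9}{20}(3/5)^n\bigl(\ce_{C_\s}[f]+\ce_{C_\s}[g]\bigr)$.

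Summing over $|\s|=n$ finishes the argument. The three edges of $\ell_\s$, as $\s$ ranges over $\Sigma_n$, are pairwise distinct edges of $E_{n+1}$ (each lies in $C_\s$ and in no other level-$n$ cell), so Cauchy--Schwarz over the $\s$'s, together with the monotone approximation $\sum_{e\in E_{n+1}}|df(e)|^2\le(3/5)^{n+1}\ce[f]$, bounds $\sum_{|\s|=n}\bigl|I_{n+1}(\ell_\s)(fdg)\bigr|$ by $\tfrac{2}{\sqrt3}(3/5)^{n+1}\ce[f]^{1/2}\ce[g]^{1/2}$; while $\sum_{|\s|=n}\bigl(\ce_{C_\s}[f]+\ce_{C_\s}[g]\bigr)=\ce[f]+\ce[g]$ since the level-$n$ cells partition the energy. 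Multiplying by $(5/3)^n$ makes both contributions independent of $n$:
\[
(5/3)^n\sum_{|\s|=n}|c_\s|\le\frac{2\sqrt3}{5}\,\ce[f]^{1/2}\ce[g]^{1/2}+\frac{9}{20}\bigl(\ce[f]+\ce[g]\bigr),
\]
which is exactly $N(\{c_\s\})<\infty$; for general $\o$ one sums these bounds over $i\in I$.

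The point that needs care — and the only real obstacle — is that the cheap estimate $|c_\s|\le\sqrt3\bigl(\sum_{e\in\ell_\s}|\int_e\o|^2\bigr)^{1/2}$ combined with the boundedness of $Q_n[\o]$ is useless here, since summing it over the $\sim 3^n$ lacunas of level $n$ loses a factor $3^{n/2}$. One must instead exploit that $\ell_\s$ is a cycle, replacing $f(e_+)$ by its deviation $f(e_+)-\bar f_\s$ from the mean of $f$ on the three vertices; this is what produces the extra factor $(3/5)^{1/2}$ per level (one factor from $df$, one from $dg$) that renders the sequence $\{(5/3)^n\sum_{|\s|=n}|c_\s|\}_n$ bounded.
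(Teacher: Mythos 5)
Your proof is correct and follows essentially the same route as the paper: split $c_\s$ into the level-$(|\s|+1)$ Riemann sum plus the telescoping tail, use closedness of $\ell_\s$ to subtract a constant from $f$ so the main term becomes quadratic in the increments $df,dg$, and then sum over the lacunas of a fixed level using monotonicity and additivity of the approximating energies to cancel the factor $(5/3)^n$. The only differences (subtracting the mean $\bar f_\s$ instead of a vertex value, Cauchy--Schwarz products instead of $ab\le\tfrac12(a^2+b^2)$, and slightly different constants) are cosmetic.
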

\begin{proof} It is enough to prove the result for $\o=fdg$. Observe that
\begin{align*}
|c_\s|&=|\lim_n I_n(\ell_\s)(fdg)|
\leq |I_{|\s|+1}(\ell_\s)(fdg)|+\sum_{k=|\s|+1}^\infty| I_{k+1}(\ell_\s)(fdg)-I_k(\ell_\s)(fdg)|\,.
\end{align*}
Since $\ell_\sigma$ is a closed curve, $|I_{|\s|+1}(\ell_\s)(fdg)|=|I_{|\s|+1}(\ell_\s)((f-const)dg)|$. Denoting by $x_1,x_2,x_3$ the vertices of $\ell_\sigma$, and choosing $const=f(x_1)$, we get
$$
|I_{|\s|+1}(\ell_\s)(fdg)|
=|df(x_1,x_2) dg(x_1,x_2)+ df(x_1,x_3) dg(x_2,x_3)|
\leq \frac12 \sum_{e\in E_{|\s|+1}(\ell_\s)} df(e)^2+ dg(e)^2.$$
By \eqref{eq:b} we get
$\displaystyle |c_\s|\leq \frac12 \sum_{k=|\s|+1}^\infty\sum_{e\in E_{k}(\ell_\s)} \big( df(e)^2 + dg(e)^2 \big)$,
whence
$\sum_{|\s|=n}|c_\s|\leq\frac54\big(\frac35\big)^{n+1}\big( \ce[f] +\ce[g] \big)$. The thesis follows.
\end{proof}
\begin{lem}\label{prop:conditions}
If $c=\{c_\s\}$ belongs to $\ell_N(\S)$, then $k:=A^*c\in\ell_N(\S)$.
\end{lem}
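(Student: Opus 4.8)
The plan is to recognize that $A^*c$ is obtained by applying to $c$ an operator of exactly the type already controlled by Lemma \ref{Nestimate}. First I would note that, since $A=B^{-1}$ lies in the group $UT(\S,\br)$ of lower unitriangular matrices, one has $A_{\s\t}=0$ whenever $\t\not\le\s$; transposing, $(A^*)_{\t\s}=A_{\s\t}$ vanishes unless $\s\ge\t$, so $A^*$ is an \emph{upper} triangular matrix indexed by $\S$ in the sense of Lemma \ref{Nestimate}. I would also observe that each component $(A^*c)_\s=\sum_{\t\ge\s}A_{\t\s}c_\t$ is given by an absolutely convergent series, since $\sum_{|\t|\ge|\s|}|c_\t|\le N(c)\sum_{k\ge|\s|}(3/5)^k<\infty$ --- this is precisely the estimate carried out in the proof of Lemma \ref{Nestimate} --- so $A^*c$ is well defined as an element of $\br^\S$.

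Next I would invoke Lemma \ref{lem:asigmatau}, which gives $0\le A_{\s\t}\le1$ for all $\t\le\s$; hence the entries of $A^*$ are bounded, with $\|A^*\|_\infty=\sup_{\t\le\s}A_{\s\t}\le1$. Lemma \ref{Nestimate} then applies verbatim and yields $A^*\in B(\ell_N(\S))$, together with the quantitative estimate $N(A^*c)\le\frac52\,N(c)$. In particular, if $c\in\ell_N(\S)$ then $k=A^*c\in\ell_N(\S)$, which is the assertion.

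I do not expect any genuine obstacle here: the substantive content has already been established --- the sign and size control of the entries of $A$ in Lemma \ref{lem:asigmatau}, and the boundedness of bounded upper triangular matrices on $\ell_N(\S)$ in Lemma \ref{Nestimate} --- and the present statement is merely their combination. The only points deserving a word of care are the order convention (a lower triangular $A$ transposes to an upper triangular $A^*$, matching the hypothesis of Lemma \ref{Nestimate}) and the convergence of the series defining the components of $A^*c$, both of which are dealt with above.
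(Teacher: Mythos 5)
Your proof is correct and follows exactly the paper's route: the paper disposes of this lemma by citing Lemmas \ref{lem:asigmatau} and \ref{Nestimate}, precisely the combination you spell out (with the same bound $N(A^*c)\le\tfrac52 N(c)$ implicit). Your added remarks on the triangularity convention and the finiteness of the sums defining $(A^*c)_\s$ are sound and merely make explicit what the paper leaves to the reader.
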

\begin{proof}
Immediate  by Lemmas \ref{lem:asigmatau} and  \ref{Nestimate}.
\end{proof}

\begin{prop}\label{cor:decomp}
Let $c=\{c_\s\}$ belong to $\ell_N(\S)$, and set $k=A^*c$.
Then,  the series $\sum_\s k_\s dz_\s$ converges
to a  form $\o_H\in \ovforme$, having the $c_\s$'s as its periods.
In particular, if $\o\in\forme$, $c_\s:=\int_{\ell_\s}\o$,  $k$ and $\o_H$ as above, then $\o$ and $\o_H$ have the same periods.
\end{prop}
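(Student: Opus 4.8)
\emph{Plan.} Two things must be proved: that the series $\sum_\s k_\s\,dz_\s$ converges in the Banach space $\ovforme$, and that its sum has $\{c_\s\}$ as its periods. The whole argument rests on one estimate, namely a bound $\normaInf{dz_\s}\le C\,(5/3)^{|\s|/2}$ matching the growth of $Q[dz_\s]^{1/2}$ in \eqref{dzsigmanorm}; granted this, convergence will follow from the decay of $k=A^*c$ supplied by the $\ell_N$-estimates, and the periods will be read off from continuity of the period functionals together with the identity $AB=1$.

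\emph{Step 1 (the key estimate).} I would prove $\normaInf{dz_\s}^2=\sup_n Q_n[dz_\s]=Q[dz_\s]=\tfrac56(5/3)^{|\s|}$ by splitting into two ranges of $n$. For $n\ge|\s|+1$: $dz_\s$ is $(|\s|+1)$-exact and its local potentials on cells of level $|\s|+1$ are harmonic (the functions $z^i_\s$ on the $C_{\s i}$, constants on all other cells of that level), so the quantities $\ce_n[\,\cdot\,]$ appearing in the proof of Proposition~\ref{omega0norm} are already stationary for $n\ge|\s|+1$; hence $Q_n[dz_\s]=Q[dz_\s]$ there, with value \eqref{dzsigmanorm}. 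For $n\le|\s|$: since $dz_\s$ vanishes on every cell of level $|\s|$ other than $C_\s$, and $\int_{\perim C_\s}dz_\s=-1$ is equidistributed as $-\tfrac13$ on the three edges of $\perim C_\s$, each level-$n$ edge $e$ satisfies $\int_e dz_\s\in\{0,\pm\tfrac13\}$ (two distinct edges of the triangle $\perim C_\s$ are never collinear, so $e$ contains at most one of them), and at most three level-$n$ edges carry a nonzero integral; therefore $Q_n[dz_\s]\le\tfrac13(5/3)^n\le\tfrac13(5/3)^{|\s|}$. Combining the two ranges gives $\normaInf{dz_\s}^2=\tfrac56(5/3)^{|\s|}$.

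\emph{Steps 2--3 (convergence and periods).} By Lemma~\ref{prop:conditions}, $k=A^*c\in\ell_N(\S)$, so $\sum_{|\s|=n}|k_\s|\le N(k)\,(3/5)^n$ for every $n$, whence
\[
\sum_{\s\in\S}|k_\s|\,\normaInf{dz_\s}\;\le\;\sqrt{\tfrac56}\sum_{n\ge0}(5/3)^{n/2}\sum_{|\s|=n}|k_\s|\;\le\;\sqrt{\tfrac56}\;N(k)\sum_{n\ge0}(3/5)^{n/2}\;<\;\infty .
\]
As $\ovforme$ is complete, the series converges absolutely (equivalently, its partial sums over $|\s|\le m$ are Cauchy), and I call its sum $\o_H\in\ovforme$. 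Now every lacuna $\ell_\t$ is a finite union of edges, and $\o\mapsto\int_e\o$ is $\normaInf{\cdot}$-continuous on $\forme$, so $\o\mapsto\int_{\ell_\t}\o$ is $\normaInf{\cdot}$-continuous and extends to $\ovforme$. Applying it to the convergent series and using $B_{\s\t}=\int_{\ell_\t}dz_\s$ together with the absolute convergence of $\sum_\s|k_\s|$,
\[
\int_{\ell_\t}\o_H=\sum_{\s}k_\s\int_{\ell_\t}dz_\s=\sum_\s B_{\s\t}\,k_\s=(B^*k)_\t=(B^*A^*c)_\t=\big((AB)^*c\big)_\t=c_\t ,
\]
so $\o_H$ has periods $\{c_\s\}$. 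For the last assertion, if $\o\in\forme$ and $c_\s:=\int_{\ell_\s}\o$, then $c\in\ell_N(\S)$ by Lemma~\ref{Lemma:csigmaestimate}, so the construction above applies and produces $\o_H$ with the same periods as $\o$.

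\emph{Main obstacle.} The only delicate step is Step~1, i.e. bounding $Q_n[dz_\s]$ \emph{uniformly in $n$}: for $n>|\s|$ one must know that the level-$n$ energies of the harmonic local potentials of $dz_\s$ have already stabilised, and for $n\le|\s|$ one needs the elementary but genuinely geometric observation identifying exactly which low-level edges ``see'' the lacuna $\ell_\s$. Everything else is routine given Lemmas~\ref{Nestimate}, \ref{lem:asigmatau}, \ref{prop:conditions}, \ref{Lemma:csigmaestimate} and the relation $AB=1$.
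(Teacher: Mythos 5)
Your proof is correct and follows essentially the same route as the paper: bound $\sup_n Q_n[dz_\s]$ by a constant times $(5/3)^{|\s|}$, deduce absolute convergence of $\sum_\s k_\s dz_\s$ in $\ovforme$ from $k=A^*c\in\ell_N(\S)$, and recover the periods from the $\normaInf{\cdot}$-continuity of the edge integrals together with $B^*A^*=(AB)^*=1$ on $\ell_N(\S)$. The only difference is that you spell out (correctly, and with the sharp constant $\tfrac56$) the estimate on $Q_n[dz_\s]$ that the paper dismisses as ``a simple calculation.''
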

\begin{proof}
A simple calculation shows that $Q_n[dz_\s]\leq(5/3)^{|\s|}$, therefore
$\normaInf{k_\s dz_\s}^2$ $=$ $\sup_nQ_n[k_\s dz_\s]$ $\leq$ $|k_\s |^2(5/3)^{|\s|}$.
Then the series converges absolutely in $\ovforme$, since, by Lemma \ref{prop:conditions},
$$
\sum_\s\normaInf{k_\s dz_\s}\leq
\sum_\s (5/3)^{|\s|/2}|k_\s|\leq N(k_\bullet)\sum_k(3/5)^{k/2}
=\left(1-\sqrt{3/5}\right)^{-1}N(k_\bullet).
$$
In particular, $\int_{\ell_\t}\o_H=\sum_\s k_\s \int_{\ell_\t}dz_\s$.
By the results in Section \ref{sec:winding}, $AB=BA=1$, $|A_{\s\t}|\leq1$ and $|B_{\s\t}|=|\int_{\ell_\t}dz_\s|\leq1$, hence $A^*,B^*\in B(\ell_N(\S))$, by Lemma  \ref{Nestimate}. Then,
$$
\int_{\ell_\t}\o_H
=\sum_{\s}B_{\s\t} \sum_{\r\geq\s} A_{\r\s}  c_\r
=  (B^*A^*c)_{\t}
=((AB)^*c)_{\t}=c_{\t}
.$$
\end{proof}
\begin{lem}\label{prop:boundaries}
Let $\o$ be a smooth 1-form. Then, for any $\s$,
$$
\int_{\perim C_\s}\o= -\sum_{\t\geq\s}\int_{\ell_\t}\o=-\sum_{\t\geq\s}\int_{\ell_\t}\o_H=\int_{\perim C_\s}\o_H.
$$
\end{lem}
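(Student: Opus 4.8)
The plan is to reduce the whole statement to a single combinatorial identity between chains of edges, after which everything is a matter of passing to a limit. Fix a word $\t$. Writing each perimeter counter-clockwise and each lacuna clockwise, as in the paper's conventions, one checks directly on the three midpoints of $C_\t$ that
\begin{equation}\label{plan:chain}
\perim C_{\t 0}+\perim C_{\t 1}+\perim C_{\t 2}=\perim C_\t+\ell_\t
\end{equation}
as formal sums of oriented level-$(|\t|+1)$ edges: the nine perimeter edges of the three sub-cells split into the six subdivided edges of $\perim C_\t$ and the three edges of $\ell_\t$, with matching orientations and no repetitions. Since, for $n$ large enough, $I_n(\cdot)(\o)$ is additive under concatenation of elementary paths and $\int_\g\o=\lim_n I_n(\g)(\o)$, \eqref{plan:chain} gives $\int_{\perim C_\t}\o+\int_{\ell_\t}\o=\sum_{i=0}^{2}\int_{\perim C_{\t i}}\o$ for every $\o\in\uforme$, hence for every $\o\in\forme$; and since $\int_\g\o$ is a finite sum of edge-integrals $\int_e\o$, which are $\normaInf{\cdot}$-continuous, this linear relation extends to every $\o\in\ovforme$. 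Iterating it, starting from $C_\s$ and descending to level $m$, yields, for all $m\ge|\s|$,
\begin{equation}\label{plan:telescope}
\int_{\perim C_\s}\o=\sum_{\substack{|\t|=m\\ \t\ge\s}}\int_{\perim C_\t}\o\ -\ \sum_{\substack{\s\le\t\\ |\t|<m}}\int_{\ell_\t}\o .
\end{equation}

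Next I would let $m\to\infty$ in \eqref{plan:telescope}. By Lemma~\ref{Lemma:csigmaestimate} the periods $c_\t:=\int_{\ell_\t}\o$ of a smooth form lie in $\ell_N(\S)$, so $\sum_{\t\ge\s}|c_\t|\le\sum_{j\ge|\s|}\sum_{|\t|=j}|c_\t|\le N(c)\sum_{j\ge|\s|}(3/5)^j<\infty$, and the second term of \eqref{plan:telescope} converges to $\sum_{\t\ge\s}\int_{\ell_\t}\o$. It remains to show the first term tends to $0$, i.e. $\sum_{|\t|=m,\ \t\ge\s}\bigl|\int_{\perim C_\t}\o\bigr|\to 0$; this is the core estimate. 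I would reduce to $\o=fdg$ and split $\int_{\perim C_\t}fdg=I_{|\t|}(\perim C_\t)(fdg)+\sum_{k\ge|\t|}\bigl(I_{k+1}(\perim C_\t)(fdg)-I_k(\perim C_\t)(fdg)\bigr)$. The increments are bounded exactly as in the proof of Theorem~\ref{JLnotes} (inequality~\eqref{eq:b}), localized to $C_\t$ as in Lemma~\ref{Q-norm} and using that $(5/3)^k\sum_{e\in E_k(C_\t)}|dh(e)|^2$ increases to $\ce_{C_\t}[h]$; the tail is thus $\le\tfrac34(3/5)^{|\t|}(\ce_{C_\t}[f]+\ce_{C_\t}[g])$, and summing over $|\t|=m$ with $\sum_{|\t|=m}\ce_{C_\t}[h]=\ce[h]$ gives a contribution $O((3/5)^m)$. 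For the coarse term, $\perim C_\t$ is the closed triangle on the three boundary vertices of $C_\t$; subtracting a constant from $f$ gives $|I_{|\t|}(\perim C_\t)(fdg)|\le 2\,\osc(f)(C_\t)\,\osc(g)(C_\t)$, so by Cauchy--Schwarz the sum over $|\t|=m$ is $\le 2\bigl(\sum_{|\t|=m}\osc(f)(C_\t)^2\bigr)^{1/2}\bigl(\sum_{|\t|=m}\osc(g)(C_\t)^2\bigr)^{1/2}$. Here I invoke the resistance-metric Sobolev estimate $\osc(h)(K)^2\le c\,\ce[h]$ (valid since the effective-resistance diameter of $K$ is finite) together with the scaling $\ce[h\circ w_\t]=(3/5)^{|\t|}\ce_{C_\t}[h]$, which yields $\osc(h)(C_\t)^2=\osc(h\circ w_\t)(K)^2\le c\,(3/5)^{|\t|}\ce_{C_\t}[h]$ and hence $\sum_{|\t|=m}\osc(h)(C_\t)^2\le c\,(3/5)^m\ce[h]$. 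So the coarse term is also $O((3/5)^m)$, and the first term of \eqref{plan:telescope} tends to $0$. This proves the first equality of the lemma for every smooth $\o$.

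The second equality is immediate from Proposition~\ref{cor:decomp}: $\o$ and $\o_H$ have the same periods $c_\t$, and both corresponding series converge absolutely as above. For the third equality I would apply the identity just proved to $\o_H\in\ovforme$ in place of $\o$: \eqref{plan:telescope} holds for $\o_H$, and $\sum_{\s\le\t,\,|\t|<m}\int_{\ell_\t}\o_H\to\sum_{\t\ge\s}c_\t$, so it suffices to show $\sum_{|\t|=m,\ \t\ge\s}\bigl|\int_{\perim C_\t}\o_H\bigr|\to0$. Expanding $\o_H=\sum_\r k_\r\,dz_\r$ with $k=A^*c\in\ell_N(\S)$ (Lemma~\ref{prop:conditions}), and using that $dz_\r$ vanishes outside $C_\r$ and is exact on each $C_{\r i}$, one gets $\int_{\perim C_\t}dz_\r=0$ unless $\t\le\r$, in which case — by the first equality applied to the smooth form $dz_\r$ and $|B_{\r\rho}|\le1$ — $|\int_{\perim C_\t}dz_\r|=|\sum_{\t\le\rho\le\r}B_{\r\rho}|\le|\r|-|\t|+1$. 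Summing, $\sum_{\t\ge\s}|\int_{\perim C_\t}\o_H|\le\sum_{\r\ge\s}|k_\r|\binom{|\r|-|\s|+2}{2}\le\sum_{j\ge|\s|}\binom{j-|\s|+2}{2}N(k)(3/5)^j<\infty$, since geometric decay beats the polynomial factor; hence its tail tends to $0$. Thus $\int_{\perim C_\s}\o_H=-\sum_{\t\ge\s}\int_{\ell_\t}\o_H$, and all four quantities coincide.

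The main obstacle is the core estimate of the third paragraph: that the sum over $|\t|=m$ of the coarse triangle Riemann sums $I_{|\t|}(\perim C_\t)(fdg)$ tends to $0$ genuinely requires the resistance-metric control of oscillations together with its self-similar rescaling — the increment tails alone, already part of the paper's toolkit, do not suffice. One should also take care that the passage to $\o_H\in\ovforme$ reuses rather than circularly invokes the first equality; it does, because the first equality is proved for all of $\forme$, in particular for each $dz_\r$, and only then bootstrapped to the series $\o_H$.
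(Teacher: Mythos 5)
Your proof is correct and follows the same skeleton as the paper's: the telescoping identity over sub-cells, the proof that the level-$m$ perimeter remainder vanishes, equality of periods via Proposition~\ref{cor:decomp}, and absolute convergence to handle $\o_H$. The one genuine difference is your treatment of the coarse Riemann sums $I_{|\t|}(\perim C_\t)(fdg)$: you bound them by $2\,\osc(f)(C_\t)\,\osc(g)(C_\t)$ and then invoke a resistance-metric Sobolev estimate $\osc(h)(K)^2\le c\,\ce[h]$ together with the self-similar scaling $\ce[h\circ w_\t]=(3/5)^{|\t|}\ce_{C_\t}[h]$, whereas the paper simply reuses the device of Lemma~\ref{Lemma:csigmaestimate}: since $\perim C_\t$ is a closed path one may replace $f$ by $f-f(x_1)$, whence $|I_{|\t|}(\perim C_\t)(fdg)|\le\frac12\sum_{e\in E_{|\t|}(\perim C_\t)}\big(df(e)^2+dg(e)^2\big)$, so the coarse term is absorbed into the same squared-increment sums as the tails and $\sum_{|\t|=m}\big|\int_{\perim C_\t}fdg\big|\le\frac12\sum_{k\ge m}(3/5)^{k}\big(\ce[f]+\ce[g]\big)$. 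So your closing claim that the resistance-metric control of oscillations is \emph{genuinely required} is overstated: the elementary constant-subtraction/AM--GM trick already in the paper's toolkit suffices, and your heavier route buys nothing extra (your oscillation bound is essentially a repackaging of the same $(3/5)^m$ decay), though it is perfectly valid. For the third equality the paper is terse (``by absolute convergence''), and your explicit bound $|\int_{\perim C_\t}dz_\r|\le|\r|-|\t|+1$ for $\t\le\r$, summed against $N(k_\bullet)(3/5)^{|\r|}$, is a clean way of filling in that step; the non-circularity point you raise (first equality applied to each smooth form $dz_\r$, then passed to the series) is exactly the intended reading.
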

\begin{proof}
As above, we may assume $\o=fdg$. As for the first equation, we have, for any $n\geq|\s|$,
$$
\int_{\perim C_\s}fdg= -\sum_{\t\geq\s,|\t|\leq n}\int_{\ell_\t}fdg+\sum_{\t\geq\s,|\t|= n+1}\int_{\perim C_\t}fdg.
$$
Therefore we have to prove that the second summand goes to 0 when $n\to\infty$. It is not restrictive to assume $\s=\emptyset$. With estimates similar to those in Lemma \ref{Lemma:csigmaestimate}, we get
\begin{align*}
\sum_{|\t|= n+1}\int_{\perim C_\t}fdg
&\leq\frac12 \sum_{|\t|=n+1}\sum_{k=n+1}^\infty\sum_{e\in E_{k}(\perim C_\t)} df(e)^2+ dg(e)^2
\\
&\leq\frac12 \sum_{k=n+1}^\infty\left(\frac35\right)^{k}\left(\ce[f]+\ce[g]\right)
\leq \frac34 \left(\frac35\right)^{n}\left(\ce[f]+\ce[g]\right).
\end{align*}
The second equation  follows by  Proposition \ref{cor:decomp}, and the third by absolute convergence.
\end{proof}

\begin{lem}\label{lem:pathestimate}
Let $\o,k_\s,\o_H$ be  as above, and let $\g$ be an elementary simple path contained in the cell $C_\s$, $|\s|=n$. Then,
$$
\bigg| \int_\g\o_H \bigg| \leq N(k_\bullet)(n+3)\left(\frac35\right)^{n}.
$$
\end{lem}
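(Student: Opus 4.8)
The strategy is to estimate $\int_\g \o_H$ by splitting the series $\o_H = \sum_\t k_\t\, dz_\t$ according to the length of $\t$ and using, for each level, the fact that $\g$ sits inside a single cell $C_\s$ of level $n$. I would first observe that for $|\t|<n$, the form $dz_\t$ is exact on the cell $C_\s$ (since $C_\s$ is contained in one of the subcells of $C_\t$, where $dz_\t$ has a local potential), so $\left|\int_\g dz_\t\right|$ is bounded by the oscillation of that harmonic local potential on $C_\s$. Using the oscillation contraction $\osc(f)(C_1)\le \tfrac35\osc(f)(C)$ recorded just before Definition of cells, together with the normalization of $dz_\t$ (its local potential $z_\t^i$ has oscillation $\tfrac13$ on $C_{\t i}$, from the values $\tfrac16,0,-\tfrac16$ at the vertices), one gets a geometric bound of the form $\left|\int_\g dz_\t\right|\le \tfrac13\left(\tfrac35\right)^{n-|\t|-1}$ or similar for each $\t<\s$ with $\t$ a truncation of $\s$; and $\int_\g dz_\t = 0$ when $\t$ is not a truncation of $\s$ (disjoint support). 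There are only $n$ truncations of $\s$ of length $<n$.

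For the terms with $|\t|\ge n$ I would instead use the crude bound coming from Lemma~\ref{lem:pathestimate}'s predecessors: for $\t$ with $C_\t\subset C_\s$ one has $\left|\int_\g dz_\t\right|\le \left|\int_\g dz_\t\right|$ controlled by $Q$-type estimates, but more simply, since $\g$ is simple and $dz_\t$ integrates to at most (in absolute value) a bounded multiple of $1$ along any edge and along $\ell_\t$, and the relevant cells of level $|\t|$ meeting $\g$ are limited in number. Actually the clean way: bound $\left|\int_\g dz_\t\right|\le 1$ for $|\t|\ge n$ (each $dz_\t$ has periods and edge-integrals bounded by $1$, and a simple path in $C_\s$ crosses the support of $dz_\t$ a bounded number of times), and there are at most $3^{|\t|-n}$ such $\t$ at level $|\t|$. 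Then weight by $|k_\t|$ and sum: $\sum_{|\t|=m\ge n}|k_\t|\cdot(\text{bound})$. Using $N(k_\bullet)=\sup_m (5/3)^m\sum_{|\t|=m}|k_\t|$, the tail $\sum_{m\ge n}\sum_{|\t|=m}|k_\t|\le N(k_\bullet)\sum_{m\ge n}(3/5)^m = \tfrac52 N(k_\bullet)(3/5)^n$ contributes a term $\le \text{const}\cdot N(k_\bullet)(3/5)^n$, which is dominated by $N(k_\bullet)(3/5)^n$ up to the stated constants. Combining the two ranges, the $n$ "exact on $C_\s$" terms each of size $O(N(k_\bullet)(3/5)^n)$ — after bounding $|k_\t|\le N(k_\bullet)(3/5)^{|\t|}\le N(k_\bullet)$ — give the factor $(n+3)$ in front of $(3/5)^n$.

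The main obstacle I anticipate is controlling $\int_\g dz_\t$ for the truncations $\t$ of $\s$ uniformly in how $\g$ winds inside $C_\s$: one must be careful that even though $dz_\t$ restricted to $C_\s$ is exact, its local potential is only a (known) harmonic function, so the integral along $\g$ equals the variation of that potential between endpoints of $\g$ — which is at most its oscillation on $C_\s$. This is where the contraction estimate $\osc\le\tfrac35\osc$ iterated $n-|\t|$ times from the normalization oscillation $\tfrac13$ on the relevant subcell does the work, giving $\left|\int_\g dz_\t\right|\le \tfrac13(\tfrac35)^{n-|\t|-1}\le 1$; multiplying by $|k_\t|\le N(k_\bullet)(3/5)^{|\t|}$ and summing over the $n$ truncations yields $\le n\, N(k_\bullet)(3/5)^n$ after pulling out the worst geometric factor. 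A secondary delicate point is the bookkeeping of constants so that the final bound reads exactly $N(k_\bullet)(n+3)(3/5)^n$; I would absorb the level-$\ge n$ tail (worth $\le 3 N(k_\bullet)(3/5)^n$ with the constants above) into the "$+3$", and the $n$ truncation terms into the "$n$", which explains the shape of the claimed inequality.
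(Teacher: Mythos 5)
Your proposal follows essentially the same route as the paper's proof: you split $\o_H=\sum_\t k_\t\,dz_\t$ according to whether $\t$ is a truncation of $\s$ (where exactness of $dz_\t$ on $C_\s$ plus the $3/5$-oscillation contraction gives $\big|\int_\g dz_\t\big|\le\frac13\left(\frac35\right)^{n-|\t|-1}$) or $|\t|\ge n$ (where simplicity of $\g$ gives a uniform bound on $\int_\g dz_\t$ and the definition of $N$ controls the tail $\sum_{|\t|\ge n}|k_\t|\le\frac52 N(k_\bullet)\left(\frac35\right)^n$), and then assemble exactly as the paper does. Apart from the precise value of the universal constant bounding $\big|\int_\g dz_\t\big|$ for $|\t|\ge n$ (a point treated with the same brevity in the paper itself, and which only affects the additive constant in the factor $n+3$), your argument is correct and essentially identical to the paper's.
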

\begin{proof}
It is easy to see that $\int_\g dz_\t$ can be non-zero only if either $\t<\s$ or $\t\geq\s$. Moreover, since $\g$ has no loops, $|\int_\g dz_\t|\leq1$.

When $\t<\s$, choosing $i$ such that $\t i\leq\s$, $dz_\t$ is exact in $C_{\t i}$, hence in $C_\s$, with $\osc_{C_{\t i}}(z_\t)=1/3$. Since the oscillation of a harmonic function in a sub-cell is bounded by $3/5$ times the oscillation of the original cell (cf. e.g. ex. 1.3.6 p. 8 in \cite{St06}), we get
\begin{equation}\label{ineq:osc}
\bigg| \int_\g dz_\t \bigg| \leq\osc_{C_{\s}}(z_\t)\leq \frac13 \Big( \frac35 \Big)^{|\s|-|\t|-1}.
\end{equation}
Since, by Lemma \ref{prop:conditions}, $\{k_\s\}\in\ell_N(\S)$,
\begin{align*}
\bigg| \int_\g \o_H \bigg|
&\leq
\sum_\t |k_\t|  \bigg| \int_\g dz_\t \bigg|
\leq \sum_{\t\geq\s} |k_\t| +
\frac13\sum_{\t<\s} |k_\t| \left(\frac35\right)^{n-|\t|-1}
\leq \sum_{|\t|\geq n} |k_\t| +
\frac13\sum_{|\t|< n} |k_\t| \left(\frac35\right)^{n-|\t|-1}\\
&\leq N(k_\bullet)\sum_{j\geq n} \left(\frac35\right)^{j} +
\frac13N(k_\bullet)\sum_{j< n} \left(\frac35\right)^{n-1}
\leq N(k_\bullet)(n+3)\left(\frac35\right)^{n}\,.
\end{align*}
\end{proof}

Let us now consider the form $\omega_1=\o-\o_H$, which has trivial integral along the perimeter of any cell $C_\s$. For any $n$, denoting by $S_n$  the 1-skeleton of the $n$-th approximation of $K$, given two points $x,y\in S_n$, and  a path $\g$ in $S_n$ joining them, the integral $\int_\g (\o-\o_H)$ depends only on the end points $x,y$, namely we get a primitive function $U_E^n$ on
 $S_n$, i.e,
 \begin{equation}\label{U1n}
 \forall e\in E_n,\qquad\int_e (\o-\o_H)= dU_E^n(e).
 \end{equation}

\begin{lem}\label{U1constr}
Let $\o=fdg$, $\o_H$ and $U_E^n$ be as above. Set $|\s|=n$, and choose $x_0\in V_n\cap C_\s$, $x\in V_{n+p}\cap C_\s$. Then there exists a constant $c$ such that
$$
|U_E^{n+p}(x)-U_E^{n+p}(x_0)|\leq \|f\|_\infty\osc_{C_\s}(g)+c (\ce[f]+\ce[g])(n+3) (3/5)^n.
$$
\end{lem}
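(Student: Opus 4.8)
The plan is to realize $U_E^{n+p}(x)-U_E^{n+p}(x_0)$ as $\int_\g(\o-\o_H)=\int_\g\o-\int_\g\o_H$ for a conveniently chosen elementary path $\g\subset S_{n+p}\cap C_\s$ joining $x_0$ to $x$, and to estimate the two integrals separately. By the path–independence of $\int_\g(\o-\o_H)$ on $S_{n+p}$ already established above (which only uses $\int_{\perim C_\t}(\o-\o_H)=0$ from Lemma~\ref{prop:boundaries} and $\int_{\ell_\t}(\o-\o_H)=0$ from Proposition~\ref{cor:decomp}), the left-hand side is $dU_E^{n+p}$ evaluated at the endpoints and does not depend on $\g$. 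I would take $\g$ to be a \emph{simple} path running down the nested chain of cells $C_\s=D_0\supset D_1\supset\cdots\supset D_p$, where $D_j$ is the cell of level $n+j$ in the chain and $x$ is a boundary vertex of $D_p$: concretely $\g$ goes from $y_{j-1}$ to $y_j$ along one or two perimeter edges of $D_{j-1}$, where $y_0:=x_0$, $y_j$ ($1\le j\le p$) is the boundary vertex of $D_{j-1}$ at which $D_j$ sits, and $y_{p+1}:=x$; a little care about which way to go around each perimeter keeps $\g$ simple. In particular $\g$ has at most two edges of each level $n,n+1,\dots,n+p$.

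Recall $\o=fdg$. By additivity of the integral over concatenations, $\int_\g fdg=\sum_\epsilon\int_\epsilon fdg$ over the edges $\epsilon$ of $\g$; for a single edge $\epsilon$ of level $m$, Theorem~\ref{JLnotes}$(i)$ and its proof give $\int_\epsilon fdg=f(\epsilon_+)dg(\epsilon)+r_\epsilon$ with $|r_\epsilon|\le c\,(3/5)^{m}(\ce[f]+\ce[g])$, and writing $f(\epsilon_+)dg(\epsilon)=f(\epsilon_-)dg(\epsilon)+df(\epsilon)dg(\epsilon)$ together with $|dh(\epsilon)|^2\le(3/5)^{m}\ce[h]$ yields $|df(\epsilon)dg(\epsilon)|\le\tfrac12(3/5)^m(\ce[f]+\ce[g])$. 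Since there are at most two edges of each level $n,\dots,n+p$ on $\g$, the sum of the $r_\epsilon$'s and of the $df(\epsilon)dg(\epsilon)$'s is dominated by $c\,(3/5)^n(\ce[f]+\ce[g])$. There remains $\sum_\epsilon f(\epsilon_-)dg(\epsilon)$; with $v_0=x_0,v_1,\dots,v_M=x$ the vertices of $\g$ and $\epsilon_k$ the edge from $v_{k-1}$ to $v_k$, this equals $\sum_k f(v_{k-1})\bigl(g(v_k)-g(v_{k-1})\bigr)$, and subtracting the constant $f(x_0)$ and performing an Abel summation (via $f(v_{k-1})-f(x_0)=\sum_{l<k}df(\epsilon_l)$) rewrites it as $f(x_0)\bigl(g(x)-g(x_0)\bigr)+\sum_l df(\epsilon_l)\bigl(g(x)-g(v_l)\bigr)$. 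The first term is at most $\|f\|_\infty\osc_{C_\s}(g)$. In the second, if $\epsilon_l$ lies in the step from $y_{j-1}$ to $y_j$ then $|df(\epsilon_l)|\le(3/5)^{(n+j-1)/2}\ce[f]^{1/2}$, while $g(x)-g(v_l)$ is a sum of increments $dg$ over the edges of $\g$ from $v_l$ to $x$, all of level $\ge n+j-1$, so $|g(x)-g(v_l)|\le c\,(3/5)^{(n+j-1)/2}\ce[g]^{1/2}$; summing the resulting geometric series gives $\bigl|\sum_l df(\epsilon_l)(g(x)-g(v_l))\bigr|\le c\,(3/5)^n(\ce[f]+\ce[g])$. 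Hence $\bigl|\int_\g\o\bigr|\le\|f\|_\infty\osc_{C_\s}(g)+c\,(3/5)^n(\ce[f]+\ce[g])$.

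For $\int_\g\o_H$, since $\g$ is an elementary simple path contained in $C_\s$ with $|\s|=n$, Lemma~\ref{lem:pathestimate} gives $\bigl|\int_\g\o_H\bigr|\le N(k_\bullet)(n+3)(3/5)^n$, where $k=A^*c$ and $c=\{c_\s\}$ is the sequence of periods of $\o$. By Lemma~\ref{Lemma:csigmaestimate} (more precisely its proof) one has $N(c_\bullet)\le c(\ce[f]+\ce[g])$, and by Lemma~\ref{prop:conditions} (i.e. Lemmas~\ref{lem:asigmatau} and~\ref{Nestimate}) $N(k_\bullet)\le\tfrac52 N(c_\bullet)$, so $\bigl|\int_\g\o_H\bigr|\le c\,(\ce[f]+\ce[g])(n+3)(3/5)^n$. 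Adding the two estimates and absorbing the $(3/5)^n$ term into $(n+3)(3/5)^n$ gives the claimed inequality.

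The main difficulty is making the error term \emph{uniform in $p$} and decaying like $(3/5)^n$: a path crossing $C_\s$ ``horizontally'' could pick up a $dg$-variation as large as $(9/5)^{p/2}$, so it is essential that $\g$ descend through the nested cells $D_j$, so that the levels of its edges increase and every error contribution is dominated by a convergent geometric series in $(3/5)^{1/2}$, while the only term that does not decay is $f(x_0)(g(x)-g(x_0))$, which produces exactly $\|f\|_\infty\osc_{C_\s}(g)$. A minor, purely combinatorial point is arranging $\g$ to be simple (needed to invoke Lemma~\ref{lem:pathestimate}) while keeping this descending form, which is handled by occasionally traversing a cell perimeter the long way to avoid backtracking.
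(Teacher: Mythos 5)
Your proposal is correct and follows essentially the same route as the paper: a path descending through the nested chain of cells (with edge levels increasing, so at most one or two edges per level), the splitting of $\int_\g fdg$ into the main term $f(x_0)\big(g(x)-g(x_0)\big)$ plus geometrically decaying errors (your Abel summation is just a repackaging of the paper's decomposition $f=\sum_k f_k$), and Lemma~\ref{lem:pathestimate} for $\int_\g\o_H$, with $N(k_\bullet)\leq c(\ce[f]+\ce[g])$ from Lemmas~\ref{Lemma:csigmaestimate} and~\ref{prop:conditions}. The only cosmetic difference is your use of perimeter edges (requiring the small simplicity fix you note), whereas the paper's connecting edges lie in the corner sub-cells and give a simple path automatically.
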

\begin{proof}
{\bf First step}. Let $\s^0,\s^1,\dots\s^{p}$ be the subsequent multi-indices of length $n+j$, $\s^0=\s$, such that $x\in C_{\s^j}$, $j=0,\cdots,p$. We shall  construct inductively a path $\g$, joining $x_0$ with $x$, given by vertices $x_0,\dots x_{p+1}=x$, such that
\begin{itemize}
\item $x_j\in V_{n+j}$ for $j\leq p$, $x_{p+1}\in V_{n+p}$;
\item $x_j\in C_{\s^j}$, $j\leq p$;
\item either $x_{j-1}=x_{j}$, or
$x_{j-1},x_{j}$ are joined by an edge $e_{j}$, with $e_j\in E_{n+j}$ if $0<j\leq p$, and $e_{p+1}\in E_{n+p}$. In the first case we set $e_j$ to be the trivial edge.
\end{itemize}
Since $x_0$ is given, we only need to describe the inductive step. Suppose we have $x_{j-1}$, $j\leq p$. If $x_{j-1}\in  C_{\s^{j}}$, we set $x_{j}:=x_{j-1}$. If not, it is connected by an edge $e_{j}\in E_{n+j}$ to a vertex $x_j\in V_{n+j}\cap C_{\s^{j}}$. Finally, $x_{p}$ and $x_{p+1}$ are both vertices in $V_{n+p}\cap C_{\s^{p}}$, hence either coincide or are joined by an edge $e_{p+1}$.
\\
{\bf Second step}. There exists a constant $c_1$ such that
$$
|\int_\g fdg|\leq \|f\|_\infty\osc_{C_\s}(g)+c_1\left(\frac35\right)^n(\ce[f]+\ce[g]).
$$
We decompose the restriction of $f$ to $\g$ as $f=\sum_{k=0}^{p+1}f_k$, with $f_0=f(x_0)$ constantly, and, for $0<k\leq p+1$,
$$
f_k(t)=
\begin{cases}
0&t\in e_j,j<k,\\
f(t)-f(x_{k-1})&t\in e_k,\\
f(x_k)-f(x_{k-1})&t\in e_j,j>k.\\
\end{cases}
$$
We then get
\begin{align*}
\int_\g fdg
=&\int_\g f_0dg +
\sum_{k=1}^{p+1}\sum_{j=k}^{p+1}\int_{e_j} f_kdg\\
=& f(x_0)(g(x)-g(x_0)) +
\sum_{k=1}^{p+1}\int_{e_k} f_kdg
+\sum_{k=1}^{p}\sum_{j=k+1}^{p+1}  df(e_k) dg(e_j)\\
\end{align*}
As for the first summand, we clearly have $| f(x_0)(g(x)-g(x_0))|\leq \|f\|_\infty\osc_{C_\s}(g)$.
We now estimate the second summand.
First observe that
\begin{align*}
\bigg|  \int_{e_k} f_k dg \bigg| &
\leq   \left(I_{n+k}(e_k)(f_k dg)
+\sum_{r=n+k+1}^{\infty}|I_r(e_k)(f dg)-I_{r-1}(e_k)(f dg) |\right)\\
\leq   &\sum_{r=n+k}^{\infty}\left(\sum_{e\in E_r}  df(e)^2\right)^{1/2}
\left(\sum_{e\in E_r} dg(e)^2\right)^{1/2}
\leq  \frac{5}4\left(\frac35\right)^{n+k}(\ce[f]+\ce[g]).
\end{align*}
Therefore,
\begin{align*}
\bigg| \sum_{k=1}^{p+1} \int_{e_k} f_k dg \bigg|
\leq  \sum_{k=1}^{p+1} \frac{5}4\left(\frac35\right)^{n+k}(\ce[f]+\ce[g])
 \leq  \frac{15}8\left(\frac35\right)^{n}(\ce[f]+\ce[g]).
\end{align*}
We now consider the third summand. Since, $\forall e\in E_m$,
$| df(e)|\leq
(3/5)^{m/2} \ce[f]^{1/2}$, we get
\begin{align*}
|\sum_{k=1}^{p}\sum_{j=k+1}^{p+1}  df(e_k) dg(e_j)|
\leq
&\ce[f]^{1/2}\ce[g]^{1/2}\sum_{k=1}^{\infty} \left(\frac35\right)^{(n+k)/2}
\sum_{j=k+1}^{\infty} \left(\frac35\right)^{(n+j)/2}
\\
=
&\frac34\frac{\sqrt3}{\sqrt5-\sqrt3}\left(\frac35\right)^{n}(\ce[f]+\ce[g]).
\end{align*}
The thesis follows.
\\
{\bf Conclusion}.
Since
$|U_E^{n+p}(x)-U_E^{n+p}(x_0)| = | \int_\g (fdg-\o_H) | \leq
| \int_\g fdg |+ | \int_\g \o_H |$,
the result follows by Step 2 and Lemma \ref{lem:pathestimate}.
\end{proof}

\begin{prop}\label{cor:decompBis}
For any $\omega\in\forme$,  there exists $U_E\in\cf$ and $\o_H\in\forme$ such that
$\o=dU_E+\o_H$,
where $\o_H=\sum_\s k_\s dz_\s$.
\end{prop}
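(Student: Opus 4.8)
The plan is to combine the convergence results already at our disposal. By Lemma~\ref{Lemma:csigmaestimate} the sequence of periods $c_\s = \int_{\ell_\s}\o$ belongs to $\ell_N(\S)$; by Lemma~\ref{prop:conditions} so does $k = A^*c$; and by Proposition~\ref{cor:decomp} the series $\o_H := \sum_\s k_\s\, dz_\s$ converges in $\ovforme$ and has exactly the $c_\s$ as its periods. Thus the difference $\o_1 := \o - \o_H$ (a priori only an element of $\ovforme$) has vanishing period around every lacuna, and, by Lemma~\ref{prop:boundaries}, vanishing integral along the perimeter of every cell. The task is therefore reduced to showing that a form in $\ovforme$ whose integral vanishes on all cell-perimeters is of the form $dU_E$ for some $U_E\in\cf$; in particular $\o_1\in\forme$, and then $\o_H = \o-\o_1\in\forme$ as well.

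First I would construct the candidate potential. As noted just before Lemma~\ref{U1constr}, since $\int_{\perim C_\s}\o_1 = 0$ for all $\s$, on each $1$-skeleton $S_n$ the integral $\int_\g \o_1$ between two vertices is independent of the path $\g\subset S_n$ joining them, so fixing a base vertex (say $p_0$) yields a well-defined function $U_E^n$ on $V_n$ with $\int_e\o_1 = dU_E^n(e)$ for $e\in E_n$. These functions are consistent: $U_E^{n+p}$ restricted to $V_n$ equals $U_E^n$, so they patch to a single function $U_E$ on $V_* = \cup_n V_n$. The key point is to show $U_E$ is uniformly continuous on $V_*$, hence extends to $\cf$ with $\ce[U_E]<\infty$. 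This is precisely where Lemma~\ref{U1constr} is used: writing $\o = fdg$ (by linearity/density it suffices to treat this case, with the general statement following by summing over the finitely many generators), that lemma gives, for $|\s| = n$, $x_0\in V_n\cap C_\s$ and $x\in V_{n+p}\cap C_\s$,
\[
|U_E^{n+p}(x) - U_E^{n+p}(x_0)| \le \|f\|_\infty \osc_{C_\s}(g) + c(\ce[f]+\ce[g])(n+3)(3/5)^n.
\]
Since $\osc_{C_\s}(g)\to 0$ as $|\s|\to\infty$ (g is continuous and $\diam C_\s\to 0$) and $(n+3)(3/5)^n\to 0$, the oscillation of $U_E$ on cells of level $n$ tends to $0$ uniformly; together with the fact that any two nearby points of $V_*$ lie in a common cell of high level, this gives uniform continuity of $U_E$, so it extends to a continuous function on $K$. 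To see it has finite energy, one estimates $(5/3)^n\sum_{e\in E_n}|dU_E^n(e)|^2 = (5/3)^n\sum_{e\in E_n}|\int_e\o_1|^2 = Q_n[\o_1]$, which is bounded since $\o_1\in\ovforme$ and $Q_n$ extends continuously there; hence $\ce[U_E] = \lim_n Q_n[\o_1]\le \|\o_1\|_{\mathrm{sup}}^2 < \infty$, so $U_E\in\cf$.

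Finally I would check $\o = dU_E + \o_H$ as elements of $\forme$, i.e.\ that $\o$ and $dU_E + \o_H$ have the same integral along every $e\in E_*$. By construction $\int_e dU_E = dU_E(e) = dU_E^n(e) = \int_e\o_1 = \int_e\o - \int_e\o_H$ for $e\in E_n$, which is exactly the required identity; in particular $dU_E = \o_1$ in $\forme$, so $\o_H = \o - dU_E \in\forme$ and the statement is proved. The main obstacle is the continuity/finite-energy argument for $U_E$: one must be careful that the path used to compare $U_E^{n+p}(x)$ with the base value stays within a single small cell $C_\s$ so that Lemma~\ref{U1constr}'s cell-localized estimate applies, and that the estimate is genuinely uniform in $p$ — this is what makes $U_E$ well-defined and continuous on all of $K$ rather than merely on each $V_n$. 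Everything else is bookkeeping with the already-established convergence of $\o_H$ in $\ovforme$ and the period computations of Lemmas~\ref{prop:conditions}--\ref{prop:boundaries}.
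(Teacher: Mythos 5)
Your proposal is correct and follows essentially the same route as the paper: vanishing of the periods and perimeter integrals of $\o-\o_H$ (Lemmas \ref{Lemma:csigmaestimate}, \ref{prop:conditions}, \ref{cor:decomp}, \ref{prop:boundaries}), construction of the skeleton primitives $U_E^n$, uniform continuity via Lemma \ref{U1constr}, and the energy bound $\ce_n[U_E]=Q_n[\o-\o_H]\leq \normaInf{\o-\o_H}^2$. The only cosmetic difference is the final identification: the paper estimates $\normaInf{\o-dU_E-\sum_{|\s|\le k}k_\s dz_\s}\to 0$ directly, while you identify $dU_E$ with $\o-\o_H$ through equality of all edge integrals, which amounts to the same thing since the extended norm on $\ovforme$ is still $\sup_n Q_n^{1/2}$.
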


\begin{proof} As usual, it is not restrictive to assume $\o=fdg$. Clearly, the functions $U_E^n$ constructed above are defined up to an additive constant, therefore we choose a vertex $x$ in $V_0$ and set $U_E^n(x)=0$ for any $n$. Let us now observe that the functions $U_E^n$ satisfy, for $m\geq n$, $U_E^m|_{S_n}=U_E^n$, therefore they define a function $U_E$ on $S:=\cup_n S_n$. By Lemma \ref{U1constr}, $U_E$ is uniformly continuous on a dense subset of $K$, hence it extends to a continuous function on $K$, and, by definition, $\int_e (\o-\o_H)= dU_E(e)$.
This shows that
$$
Q_n[\o-dU_E-\sum_{|\s|\leq k}k_\s dz_\s]=Q_n[\sum_{|\s|> k}k_\s dz_\s],
$$
therefore, reasoning as in the proof of Proposition \ref{cor:decomp},
\begin{align*}
\normaInf{ \o-dU_E-\sum_{|\s|\leq k}k_\s dz_\s }
&\leq\sup_n \sum_{|\s|> k}Q_n[k_\s dz_\s]^{1/2}
=N(k_\bullet)\left(1-\sqrt{3/5}\right)^{-1}\left(\frac35\right)^{(k+1)/2}\to 0.
\end{align*}
This shows 
that $ \o-\o_H=d U_E$ as elements of $\ovforme$ and $\ce[U_E]=Q[\o-\o_H]<\infty$.
\end{proof}
\begin{thm}\label{cor:norm}
For $\o\in\forme$, $\int_e\o=0\ \forall e\in E_*$ {\it iff} $Q[\o]=0$, i.e.
$\normaQ$ is a norm in $\forme$.
Therefore, $\forme$ coincides with $\uforme/\{Q=0\}\subset\ch$, $Q^{1/2}$ coincides with $\|\cdot\|_\ch$, and $\ov{\forme}^\ch=\ch$.
\end{thm}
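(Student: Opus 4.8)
The plan is to assemble the structural results already proved; essentially all the real work is done and what remains is a short synthesis. One implication is trivial: if $\int_e\o=0$ for every $e\in E_*$ then $Q[\o]=0$ by Lemma \ref{Q-norm}. So the content is the converse, together with the identification of $\forme$ inside $\ch$.

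For the converse, I would fix $\o\in\forme$ with $Q[\o]=0$ and invoke Proposition \ref{cor:decompBis} to write $\o=dU_E+\o_H$ with $U_E\in\cf$ and $\o_H=\sum_\s k_\s\,dz_\s$, the equality and the series holding in $\ovforme$. Then I would exploit orthogonality: by Proposition \ref{wclosed}$(i)$ the forms $dz_\s$ are pairwise $Q$-orthogonal and each is $Q$-orthogonal to every exact smooth $1$-form, in particular to $dU_E$. Since $Q$ and the $Q_n$ are continuous for $\normaInf{\cdot}$ and $\sum_\s k_\s dz_\s$ converges in this norm (as in the proof of Proposition \ref{cor:decomp}), these orthogonalities pass to the series and yield the Pythagorean identity
\[
Q[\o]=Q[dU_E]+Q[\o_H]=\ce[U_E]+\sum_\s|k_\s|^2\,Q[dz_\s]=\ce[U_E]+\frac56\sum_\s|k_\s|^2\Big(\frac53\Big)^{|\s|},
\]
using $Q[dU_E]=\ce[U_E]$ and \eqref{dzsigmanorm}. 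Hence $Q[\o]=0$ forces $k_\s=0$ for all $\s$, so $\o_H=0$, and $\ce[U_E]=0$, whence $U_E$ is constant on the connected set $K$. Since the edge functionals $\o\mapsto\int_e\o$ are $\normaInf{\cdot}$-continuous and $\int_e dU_E=U_E(e_+)-U_E(e_-)=0$, we get $\int_e\o=\int_e dU_E+\int_e\o_H=0$ for every edge $e$, i.e. $\o=0$ in $\forme$. This establishes the stated equivalence and shows $Q^{1/2}$ separates points of $\forme$, hence is a norm.

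For the "therefore" part: the equivalence just proved, read on representatives in $\uforme$, is precisely \eqref{equivalence} (for $\o\in\uforme$, $\int_e\o=0\ \forall e$ iff $Q[\o]=0$, the forward implication again by Lemma \ref{Q-norm} applied to the class of $\o$). Consequently the kernel of the quotient map $\uforme\to\forme$ coincides with $\{\o\in\uforme:Q[\o]=0\}$, so the natural map identifies $\forme$ with $\uforme/\{Q=0\}$; under this identification $Q^{1/2}$ is exactly the restriction of $\|\cdot\|_\ch$, $\forme$ is the dense subspace of $\ch$ obtained by separation, whence $\ov{\forme}^\ch=\ch$, and the derivation $d:\cf\to\forme$ corresponds to the canonical $\partial:\cf\to\ch$.

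The genuine obstacles were already cleared in Propositions \ref{wclosed}, \ref{cor:decomp} and \ref{cor:decompBis}; in this theorem the only point requiring care is that the $Q$-orthogonality and the resulting Parseval-type identity survive the passage to the auxiliary completion $\ovforme$, which follows from the continuity of $Q$, the $Q_n$, and the edge functionals with respect to $\normaInf{\cdot}$.
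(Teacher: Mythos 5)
Your proposal is correct and follows essentially the same route as the paper: decompose $\o=dU_E+\sum_\s k_\s dz_\s$ via Proposition \ref{cor:decompBis}, use the orthogonality from Proposition \ref{wclosed}$(i)$ to get the Pythagorean identity, conclude $k_\s=0$ and $\ce[U_E]=0$, and then identify $\forme$ with $\uforme/\{Q=0\}$ by the definition of $\ch$. The extra care you take with continuity of $Q$, $Q_n$ and the edge functionals in $\normaInf{\cdot}$ is exactly what the paper's terser proof implicitly relies on.
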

\begin{proof}
Since, by Proposition \ref{wclosed} $(i)$, the decomposition $\o=dU_E+\sum_\s k_\s dz_\s$ is an orthogonal decomposition w.r.t.~the norm $\normaQ$, then  $Q[\o]=0$ implies $Q[dU_E]=\ce[U_E]=0$ and $k_\s=0$ for any $\s\in\S$. As a consequence $\o$ vanishes. The last statement follows by the definition of $\ch$.
\end{proof}
Let us recall that $Q^{1/2}=\lim_n Q_n^{1/2}$ while $\|\cdot\|_{\rm sup}=\sup_n Q_n^{1/2}$. Since the second norm is stronger than the first, the first extends by continuity to a functional on the completion  $\ovforme$ of $\forme$ w.r.t. the norm $\|\cdot\|_{\rm sup}$. Contrary to the case of Sobolev spaces, where completions w.r.t. stronger norms imbeds into those with weaker norms, here Cauchy sequences which are equivalent  w.r.t. the weaker norm are not so w.r.t. the stronger, so that  $Q^{1/2}$ is only a seminorm on the completion  $\ovforme$ of $\forme$ w.r.t. the norm $\|\cdot\|_{\rm sup}$.
\begin{prop}\label{NoNorm}
$\normaQ$ is not a norm on $\ovforme$.
\end{prop}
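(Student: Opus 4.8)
The plan is to exhibit a \emph{nonzero} element $\o$ of $\ovforme$ with $Q[\o]=0$, which immediately shows that the seminorm $\normaQ$ is not a norm there. Recall that an element of $\ovforme$ is the class of a $\normaInf{\cdot}$-Cauchy sequence $(\o_k)\subset\forme$, and that both $\normaInf{\cdot}$ and $Q^{1/2}$ are continuous on $\ovforme$ (the former by construction, the latter because $Q^{1/2}\le\normaInf{\cdot}$). Hence it is enough to build a sequence $(\o_k)$ of smooth forms that is Cauchy for $\normaInf{\cdot}$, with $Q[\o_k]\to 0$ but $\normaInf{\o_k}\ge c_0>0$ for all $k$: its class is then a nonzero vector annihilated by $\normaQ$. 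The mechanism to exploit is exactly the one pointed out before the statement: $\normaInf{\cdot}=\sup_n Q_n^{1/2}$ records the \emph{running maximum} of the level-$n$ energies, so the total $Q$-mass of $\o_k$ can be forced to collapse while this running maximum stays bounded below, the ``mass'' being pushed out to higher and higher intermediate scales rather than genuinely vanishing.

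Concretely, I would look for each $\o_k$ inside the span of the harmonic forms $dz_\s$ (together, if necessary, with an exact corrector $dU_k$), with generators attached to lacunas $\ell_\s$ of orders tending to infinity and with coefficients that are strongly spread out: for instance, put equal small weight $\lambda_k$ on a large number $N_k$ of lacunas of order $m_k$, with $m_k\to\infty$ and $N_k\to\infty$. By the pairwise $Q$-orthogonality and the norm value \eqref{dzsigmanorm} of Proposition~\ref{wclosed}, $Q[\o_k]$ is a sum of squares $\tfrac56\sum_\s k_\s^2(5/3)^{|\s|}$ (plus $\ce[U_k]$), which one arranges to tend to $0$ by choosing $\lambda_k,N_k,m_k$ appropriately; meanwhile $\normaInf{\o_k}\ge Q_{n_k}[\o_k]^{1/2}$ is kept away from $0$ by making the edge integrals $\int_e\o_k$ reinforce one another at a suitable intermediate level $n_k$. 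The tails are controlled using the estimates of Lemmas~\ref{Lemma:csigmaestimate} and \ref{lem:pathestimate} together with the absolute convergence in Proposition~\ref{cor:decomp}, so that $\normaInf{\o_{k+1}-\o_k}$ is summable and $(\o_k)$ is genuinely $\normaInf{\cdot}$-Cauchy; passing to the limit one also sees that $Q[\o]=\lim_k Q[\o_k]=0$ while $\normaInf{\o}=\lim_k\normaInf{\o_k}\ge c_0$.

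The delicate point is the simultaneous control of the two norms. The very $Q$-orthogonality that makes $Q[\o_k]$ small also tends, for each individual generator, to make $\normaInf{dz_\s}$ equal to $Q^{1/2}[dz_\s]$ (each $Q_n[dz_\s]$ being monotone in $n$), so the persistence of $\normaInf{\o_k}$ cannot come from a single summand and must be produced by cross-scale interference among the $dz_\s$ (and the corrector $dU_k$). One therefore has to track carefully which edge integrals $\int_e\o_k$ add up coherently and at which level --- using, as in the proof of Lemma~\ref{Q-norm}, that $\Phi_n(\int_e\o)$ and $\Phi_n(\o(e))$ differ at finite $n$, and the oscillation estimate \eqref{ineq:osc} for harmonic functions on cells --- and ensure that this reinforcing configuration survives in the $\normaInf{\cdot}$-limit while the limiting value of $Q$ is still $0$. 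Once the bookkeeping is done, $\normaQ$ vanishes on a nonzero vector of $\ovforme$, which is the assertion.
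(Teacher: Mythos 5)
Your top-level strategy is the paper's: produce a $\normaInf{\cdot}$-Cauchy sequence $(\o_k)\subset\forme$ with $Q[\o_k]\to0$ while $\normaInf{\o_k}\ge c_0>0$, so that its class in $\ovforme$ is a nonzero vector annihilated by $\normaQ$; and you correctly identify the mechanism (edge integrals adding coherently at a coarse level while energies add orthogonally). But the proposal stops exactly where the proof begins: you never choose the lacunas, the weights, or verify any of the three properties, and none of this is routine bookkeeping. The choice of lacunas is essential. With equal weights $\lambda$ on \emph{all} $3^m$ lacunas of level $m$, each edge $e\in E_n$ receives coherent contributions only from the $2^{m-n}$ level-$m$ cells lined up along it, so $Q_n[\o]\asymp\lambda^2 4^m(5/4)^n$ increases in $n$ up to $Q[\o]\asymp\lambda^2 5^m$; there is no overshoot of $\sup_n Q_n$ over $\lim_n Q_n$, hence no scaling of $\lambda$ gives $Q\to0$ with the sup-norm bounded below. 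What does work is to take precisely the $2^m$ lacunas whose cells lie along one fixed edge $e_1$ (indices in $\{0,2\}^m$) with weights proportional to $2^{-m}$: then the integrals over every dyadic sub-edge of $e_1$ \emph{stabilize} at $\pm2^{-k}$ and the integrals elsewhere are $O(2^{-m})$, so $Q_0$ stays of order $1$ while $Q\asymp(5/6)^m\to0$. This is essentially the paper's construction, which uses the $n$-exact forms with local potentials $2^{-n}\,g\circ w_\s^{-1}$ on $C_\s$, $\s\in\{0,2\}^n$, and zero on the remaining cells.

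Moreover, two of the specific devices you invoke to close the gap would not work. First, Cauchyness cannot come from ``absolute convergence as in Proposition \ref{cor:decomp}'': if $(\o_k)$ were partial sums of an absolutely convergent series $\sum_\s k_\s dz_\s$, then continuity of $Q$ in $\normaInf{\cdot}$ together with the orthogonality and norms of Proposition \ref{wclosed} and \eqref{dzsigmanorm} would give $Q[\lim_k\o_k]=\sum_\s|k_\s|^2Q[dz_\s]$, which vanishes only if the limit vanishes --- incompatible with $Q[\o_k]\to0$ and $\normaInf{\o_k}\ge c_0$. The needed sequence is Cauchy by \emph{cancellation} between consecutive terms (their edge integrals nearly coincide, although each term has sup-norm of order $1$), not by summability; the paper proves this by embedding $(\forme,\normaInf\cdot)$ into the space $A=\{\o\in\V(E)^*:\normaInf{\o}<\infty\}$ of edge functionals, writing the limit $\o$ explicitly through its edge integrals ($2^{-k}$ on $w_\s e_1$, $\s\in\{0,2\}^k$, and $0$ elsewhere), and estimating $Q_k[\o-\o_n]$ separately in the regimes $k<n$ and $k\ge n$ to get $\normaInf{\o-\o_n}^2\le5\left(\tfrac56\right)^n$. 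Second, the persistence of the sup-norm in the working example is produced by same-level coherence of the $2^m$ contributions along the fixed edge, not by ``cross-scale interference''. Until a concrete configuration is fixed and a two-regime estimate of this kind is carried out, the statement is not proved; as it stands your text is a program rather than a proof.
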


\begin{proof}
We illustrate the strategy of the proof. First, we describe a sequence $\o_n$ in $\forme$, then we construct a normed space $A$ in which $(\forme, \normaInf\cdot)$ isometrically embeds, and a unit vector $\o\in A$ such that $\normaInf{\o-\o_n}\to0$, showing that the sequence $\o_n$ is Cauchy and $\lim_n\normaInf{\o_n}=1$. Finally, we observe that $\lim_n Q[\o_n]=0$.

Let $p_i$, $i=0,1,2$, be the external vertices of the gasket, $e_i$ be the edge in $E_0$ opposite to $p_i$, $i=0,1,2$, and let $g$ be the 0-harmonic function taking value $-1/2$ on $x_0$, 0 on $x_1$ and $1/2$ on $x_2$. Then, for any given $n$, let us consider the $n$-exact form $\o_n$ determined by the functions $f_\s$, $|\s|=n$, where
\begin{equation}
f_\s=
\begin{cases}
2^{-n}g\circ (w_\s)^{-1}&\text{if }\s\in\{0,2\}^n\\
0&\text{otherwise.}
\end{cases}
\end{equation}
Observe that, for any edge $e\in E_k$,
\begin{equation}
\lim_n\int_e\o_n=
\begin{cases}
2^{-k}&\text{if }e=w_\s e_1,\s\in\{0,2\}^k\\
0&\text{otherwise.}
\end{cases}
\end{equation}

\noindent Let us consider the vector space $\V(E)$ given by finite linear combinations of edges, its algebraic dual $\V(E)^*$, where the duality is denoted by the integral, $\langle\omega,e\rangle=\int_e\omega$, and the subspace $A=\{\omega\in \V(E)^*: \normaInf{ \o } <\infty\}$, with
$\normaInf{ \o }^2=\sup_n(5/3)^n\sum_{e\in E_n} | \int_e\o |^2$. Clearly $\normaInf{ \cdot }$ is a norm on $A$, $(A,\normaInf{ \cdot })$ is a normed vector space, and $\forme\subset A$ in an obvious way.
We now prove that $\o_n$ converges to a non-trivial element $\o\in A$, thus showing that  $\o_n$ is a Cauchy sequence in $\forme$ having a non-trivial limit $\o\in\ovforme$.

\noindent
Define $\o\in\V(E)^*$ by $\int_e \o := \begin{cases}
2^{-k}&\text{if }e=w_\s e_1,\s\in\{0,2\}^k\\
0&\text{otherwise.}
\end{cases}$. Since $Q_k[\o] = (5/6)^{k}$, $\omega$ is a unit element in $A$. We now  compute $Q_k[\o-\o_n]$ in $A$.

\noindent  If $k<n$,
$$
Q_k[\o-\o_n] = \Big( \frac53 \Big)^k \sum_{\substack{e\in E_k\\ e\not\subset e_1}} \Big| \int_e \o_n \Big|^2 = \Big( \frac53 \Big)^k \cdot 2^{k+1} (2^{-n-1})^2 = \frac12 \Big( \frac{10}3 \Big)^k 4^{-n}<\frac12 \Big( \frac56 \Big)^{n}.
$$
If $k\geq n$, we use the estimate $Q_k[\o-\o_n]^{1/2} \leq Q_k[\o]^{1/2} + Q_k[\o_n]^{1/2}$. Since each edge $e\in E_k$ is contained in only one cell $C_\s$, where $\o_n$ has a potential $f_\s$, we have
$$
Q_k[\o_n]
= \Big(\frac53 \Big)^k \sum_{|\s|=n} \sum_{e\in E_k(C_\s)} \Big| \int_e \o_n \Big|^2
= \sum_{\s\in\{0,2\}^n}\ce_{C_\s}[2^{-n}g\circ w_\s^{-1}]
=\frac32\left(\frac56\right)^n.
$$
In particular,  $Q_k[\o_n] = Q[\o_n]$.
Therefore, when $k\geq n$,
$$Q_k[\o-\o_n] \leq 2 Q_k[\o] + 2 Q_k[\o_n] \leq 2 \left(\frac5{6}\right)^{k} + 3 \left(\frac56 \right)^n = 5 \left(\frac56 \right)^n.$$
Hence,
$\normaInf{ \o-\o_n } ^2 = \sup_k Q_k[\o-\o_n] \leq 5 \left(\frac56 \right)^n$,
namely $\o_n$ converges in $\normaInf{ \cdot }$ to the non-trivial 1-form  $\o\in\ovforme$.
On the other hand, since $Q$ is continuous w.r.t. the norm $\|\cdot\|_{sup}$,
$Q[\o]=\lim_n Q[\o_n]=\lim_n\frac32\left(\frac56\right)^n=0$.
\end{proof}

%%%%%%%%%
\subsection{Hodge and De Rham Theorems}\label{HodgeDeRham}
%%%%%%%%%
Corollary \ref{cor:norm} shows that $\forme$ can be equivalently defined as the quotient of $\uforme$ w.r.t. the quadratic form $Q$, hence is a dense $\cf$-sub-module of $\ch$. Then $\forme$ may be considered as the space of smooth 1-forms, on which the integral along elementary paths is naturally defined. The following Lemma is the analytic counterpart of the fact that the gasket is topologically 1-dimensional.

\begin{lem}\label{DifferentialOnOmega1}
Any local exterior differential on $\forme$ which is a closable operator on $\ch$ vanishes, hence co-closed forms are harmonic.
\end{lem}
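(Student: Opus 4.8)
The plan is to reduce, via the Hodge/de Rham decomposition already established, to the single generators $dz_\s$, and then to show that any local closable $d_1$ must kill each $dz_\s$ because $dz_\s$ fails to be exact only on a finite set, which carries no energy.

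First I would use that an exterior differential giving a differential complex satisfies $d_1\circ d=0$ and the graded Leibniz rule $d_1(f\o)=df\wedge\o+f\,d_1\o$; in particular $d_1$ annihilates every exact form $dU$, $U\in\cf$, and $d_1(f\,dg)=df\wedge dg$. By Proposition \ref{cor:decompBis}, an arbitrary $\o\in\forme$ equals $dU_E+\sum_\s k_\s dz_\s$ with the series convergent in $\ch$ (Theorem \ref{cor:norm}). Since $d_1(dU_E)=0$ and the truncated sums $dU_E+\sum_{|\s|\le k}k_\s dz_\s$ converge to $\o$ in $\ch$, closability of $d_1$ shows that it suffices to prove $d_1(dz_\s)=0$ for every $\s$.

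Second I would exploit locality. The form $dz_\s$ is $(|\s|+1)$-exact: on the interior $C_{\s i}^o$ of each of the three sub-cells it coincides with $d\tilde f_i$ for some global $\tilde f_i\in\cf$ (extend the harmonic local potential $z_\s^i$ as in the proof of Proposition \ref{omega0norm}), and it vanishes on the interior of every other cell of level $|\s|+1$. Since two $1$-forms that agree on an open set $A$ have $d_1$-images agreeing on $A$, and $d_1$ annihilates each $d\tilde f_i$ and the zero form, $d_1(dz_\s)$ vanishes on $\bigcup_i C_{\s i}^o$ and on $K\setminus C_\s$; hence, as an element of a $C(K)$-module, $d_1(dz_\s)$ is supported in the finite set $F_\s:=\bigcup_i\bordo C_{\s i}$ (the boundary vertices of the three sub-cells).

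Third I would remove this finite singular support by a cut-off argument. For each $n$ choose $\chi_n\in\cf$ with $0\le\chi_n\le1$, equal to $1$ on a neighbourhood of $F_\s$, locally constant (equal to $0$) off the $1/n$-ball $\overline{B_n}$ around $F_\s$, and with $\sup_n\ce[\chi_n]<\infty$: this is possible since finite sets have finite capacity (take equilibrium potentials of $\overline{B_n}$, whose energies decrease to $\mathrm{cap}(F_\s)$). Because energy measures on $K$ are non-atomic, the energy measure of $dz_\s$ gives $F_\s$ mass $0$, so $\chi_n\,dz_\s\to0$ and hence $(1-\chi_n)dz_\s\to dz_\s$ in $\ch$. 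On the other hand, by the Leibniz rule $d_1\big((1-\chi_n)dz_\s\big)=(1-\chi_n)\,d_1(dz_\s)-d\chi_n\wedge dz_\s$; the first term is $0$ since $\chi_n\equiv1$ near the support $F_\s$ of $d_1(dz_\s)$, and, since $d\chi_n$ is supported in $\overline{B_n}$, the second is bounded by $\|d\chi_n\|_\ch\,\|\mathbf 1_{\overline{B_n}}dz_\s\|_\ch=\ce[\chi_n]^{1/2}\,\mu_{dz_\s}(\overline{B_n})^{1/2}\to0$, again by non-atomicity together with the boundedness of $\ce[\chi_n]$. Thus $(1-\chi_n)dz_\s\to dz_\s$ while $d_1\big((1-\chi_n)dz_\s\big)\to0$, and closability forces $d_1(dz_\s)=0$. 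Consequently $d_1\equiv0$; in particular ``closed'' becomes vacuous, so a co-closed form is harmonic.

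The main obstacle, and the place where every hypothesis genuinely enters, is this third step: one must combine locality (to confine $d_1(dz_\s)$ to a finite set), closability (to pass to the limit), non-atomicity of the energy measures of $K$, and the finiteness — but crucially not the vanishing — of the capacity of finite sets to control the Leibniz remainder $d\chi_n\wedge dz_\s$. A single point of the gasket is not polar, so $\ce[\chi_n]$ cannot be driven to $0$, and one is forced to localize the remainder to the shrinking neighbourhoods $\overline{B_n}$ and invoke $\mu_{dz_\s}(\overline{B_n})\to0$ instead. Minor points to make precise are the meaning of ``local'' for an operator defined only on $\forme\subset\ch$ and the $C(K)$-bilinearity of $\wedge$ ensuring that $d\chi_n\wedge dz_\s$ only feels $dz_\s$ on $\mathrm{supp}(d\chi_n)$.
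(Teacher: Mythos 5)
Your first two steps (kill exact forms via $d_1\circ d=0$, use the decomposition $\o=dU_E+\sum_\s k_\s dz_\s$ and closability to reduce everything to $d_1(dz_\s)=0$) are sound and are essentially the paper's route. The genuine gap is in your third step, at the construction of the cut-offs: you require $\chi_n\in\cf$ equal to $1$ near $F_\s$, \emph{supported in} the $1/n$-ball $\overline B_n$, with $\sup_n\ce[\chi_n]<\infty$, ``because finite sets have finite capacity''. The quantity that bounds such a $\chi_n$ from below is the condenser capacity of the pair $(F_\s, K\setminus\overline B_n)$, i.e.\ the reciprocal of the effective resistance $R(F_\s,K\setminus\overline B_n)$, and this blows up: in the resistance metric of the gasket $R(x,y)\asymp|x-y|^{\b}$ with $\b=\log(5/3)/\log2$, so $R(F_\s,K\setminus\overline B_n)\to0$ and $\ce[\chi_n]\gtrsim n^{\b}\to\infty$. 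The finite quantity you invoke is the $\ce_1$-capacity of $\overline B_n$, but its equilibrium potentials equal $1$ on $\overline B_n$ without being supported there, so they cannot serve as your $\chi_n$ (with that choice $(1-\chi_n)dz_\s$ no longer converges to $dz_\s$, since the limit of the equilibrium potentials of the shrinking balls is the nonzero equilibrium potential of the non-polar set $F_\s$). Moreover, even with the optimal condenser cut-offs the Leibniz remainder does not vanish: with $2^{-m}\sim1/n$ one has $\ce[\chi_n]\asymp(5/3)^m$ while $\mu_{dz_\s}(\overline B_n)\asymp(3/5)^m$, so $\ce[\chi_n]^{1/2}\mu_{dz_\s}(\overline B_n)^{1/2}=O(1)$, not $o(1)$. (A further, smaller, issue: the bound $\|d\chi_n\wedge dz_\s\|\le\|d\chi_n\|_\ch\,\|\mathbf 1_{\overline B_n}dz_\s\|_\ch$ presupposes a bounded $C(K)$-bilinear wedge product with values in $\O^2(K)$, which is not among the hypotheses of the lemma; the paper's argument needs no product structure on $\O^2(K)$.)

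The detour through the exceptional set $F_\s$ is also unnecessary, and removing it repairs the proof: $dz_\s$ is locally exact at \emph{every} point of $K$, including the vertices in $F_\s$. Indeed, near a junction vertex the two local potentials on the adjacent sub-cells can be normalized to agree at that vertex and then glue to a single potential on a whole neighbourhood (extendable to a global finite-energy function as in the proof of Proposition \ref{omega0norm}), and near an outer vertex of $C_\s$ the potential glues with a constant. Hence locality together with $d_1\circ d=0$ gives $d_1(dz_\s)=0$ outright, with no cut-off and no capacity argument. This is exactly the paper's proof: a local $d_1$ with $d_1\circ d=0$ vanishes on all locally exact smooth forms, which by Proposition \ref{wclosed} $(ii)$ are precisely the forms with eventually vanishing $k_\s$ and are dense; closability (applied, as in your step two, along the truncations of the decomposition of Proposition \ref{cor:decompBis}) then forces $d_1=0$ on $\forme$.
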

\begin{proof}
A closable operator $(d_1 ,\forme)$ on $\ch$, with values in another non degenerate, Hilbertian $\cf$-module $\O^2 (K)$, and giving rise to a complex $0\to \cf\to \forme\to \O^2 (K)$, necessarily vanishes on locally exact smooth 1-forms. However a form is locally exact {\it iff} the $k_\s$'s are eventually zero, as shown in Proposition \ref{wclosed} $(ii)$, therefore they are dense in $\ch$. The result follows.\end{proof}

\begin{thm}[Hodge decomposition] \label{cor:Hodge}
 Any  1-form $\o\in\ch$ can be uniquely decomposed as an orthogonal  sum $dU_E\oplus\o_H$ of an exact form and a harmonic form. The $dz_\s$'s give an orthogonal basis for the space of harmonic forms, therefore the decomposition above may be written as $\o=dU_E+\sum_\s k_\s dz_\s$.
\end{thm}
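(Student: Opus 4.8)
Essentially everything is already in place: the decomposition has been established on the dense subspace $\forme$ in Proposition~\ref{cor:decompBis}, and Theorem~\ref{cor:norm} identifies $\forme$ with a dense $\cf$-submodule of $\ch$ on which $\normaQ$ is the Hilbert norm. What is left is the routine passage to the closure. First I would fix the two closed subspaces involved: $\ch_{\mathrm{ex}}$, the set of exact forms $\{dU:U\in\cf\}$, and $\ch_{\mathrm{harm}}$, the closure in $\ch$ of the linear span of $\{dz_\s:\s\in\Sigma\}$. That $\ch_{\mathrm{ex}}$ is genuinely closed is standard (the map $d$ is an isometry from $\cf$ modulo constants, normed by $\ce^{1/2}$, into $\ch$, and $\cf$ modulo constants is complete — a consequence of the finiteness of the resistance diameter of $K$ together with the lower semicontinuity of $\ce$). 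By Proposition~\ref{wclosed}$(i)$ each $dz_\s$ is orthogonal to every $dU$, $U\in\cf$, hence $\ch_{\mathrm{ex}}\perp\ch_{\mathrm{harm}}$; moreover the $dz_\s$ are mutually orthogonal with $\|dz_\s\|_\ch^2=Q[dz_\s]=\tfrac56(5/3)^{|\s|}$ by \eqref{dzsigmanorm}, so $\{dz_\s/\|dz_\s\|_\ch\}$ is an orthonormal basis of $\ch_{\mathrm{harm}}$.

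Next I would let $P_{\mathrm{ex}}$, $P_{\mathrm{harm}}$ be the orthogonal projections of $\ch$ onto $\ch_{\mathrm{ex}}$, $\ch_{\mathrm{harm}}$; by the orthogonality of their ranges, $P_{\mathrm{ex}}P_{\mathrm{harm}}=0$. By Proposition~\ref{cor:decompBis}, combined with the $Q$-orthogonality of the summands from Proposition~\ref{wclosed}$(i)$, every $\o\in\forme$ is an orthogonal sum $\o=dU_E+\o_H$ in $\ch$, with $dU_E\in\ch_{\mathrm{ex}}$ and $\o_H=\sum_\s k_\s dz_\s\in\ch_{\mathrm{harm}}$; hence $P_{\mathrm{ex}}\o=dU_E$, $P_{\mathrm{harm}}\o=\o_H$, and $(P_{\mathrm{ex}}+P_{\mathrm{harm}})\o=\o$. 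Since $\forme$ is dense in $\ch$ and both projections are continuous, $P_{\mathrm{ex}}+P_{\mathrm{harm}}=\mathrm{id}_\ch$, that is $\ch=\ch_{\mathrm{ex}}\oplus\ch_{\mathrm{harm}}$ as an orthogonal direct sum. Consequently any $\o\in\ch$ decomposes as $\o=dU_E\oplus\o_H$ with $dU_E=P_{\mathrm{ex}}\o$ — of the form $dU_E$ for some $U_E\in\cf$, because $\ch_{\mathrm{ex}}$ is closed — and $\o_H=P_{\mathrm{harm}}\o$.

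Uniqueness follows at once from $\ch_{\mathrm{ex}}\perp\ch_{\mathrm{harm}}$: the difference of two such decompositions of $\o$ lies in $\ch_{\mathrm{ex}}\cap\ch_{\mathrm{harm}}=\{0\}$. For the last assertion, expanding $\o_H$ in the orthonormal basis $\{dz_\s/\|dz_\s\|_\ch\}$ gives $\o_H=\sum_\s k_\s dz_\s$ in $\ch$ with $k_\s=(\o,dz_\s)_\ch/\|dz_\s\|_\ch^2$ (using $dU_E\perp dz_\s$) and $\sum_\s\tfrac56(5/3)^{|\s|}|k_\s|^2=\|\o_H\|_\ch^2<\infty$; and $\ch_{\mathrm{harm}}=\ch_{\mathrm{ex}}^\perp$ is exactly the space of co-closed, hence (by Lemma~\ref{DifferentialOnOmega1}) harmonic, $1$-forms, so the $dz_\s$ indeed form an orthogonal basis of the harmonic forms. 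The substantive content lies entirely in Propositions~\ref{cor:decompBis} and~\ref{wclosed} and Theorem~\ref{cor:norm}; the only mild subtlety in this last step is checking that the space of exact forms is closed in $\ch$, which is what ensures the exact part of a general $\o\in\ch$ is still an honest $dU_E$ with $U_E\in\cf$.
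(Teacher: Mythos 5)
Your proof is correct and follows essentially the same route as the paper: closedness of the space of exact forms (via a uniform estimate plus lower semicontinuity of $\ce$), the orthogonality relations of Proposition \ref{wclosed} $(i)$, Lemma \ref{DifferentialOnOmega1} to identify the orthocomplement of the exact forms with the harmonic ones, and the decomposition of Proposition \ref{cor:decompBis} on the dense subspace $\forme$. The only difference is cosmetic: you spell out, via the two orthogonal projections and the density of $\forme$ in $\ch$ (Theorem \ref{cor:norm}), the closure argument that the paper compresses into ``the result follows''.
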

\begin{proof}
We observe that the space $B^1(K)$ of exact (smooth) forms is norm closed. This has been argued in \cite{CiSa2}, and we show it here for the sake of completeness. Indeed, $B^1 (K)$ is the range $d (\cf)$ of the derivation $d :\cf\to\ch$. Since the space of $0$-harmonic functions on $K$ is three dimensional, it is enough to prove that the image $d (\cf_0)$ of the subspace $\cf_0 :=\{f\in\cf : f\,\,{\rm vanishes\,\, on}\,\,V_0 \}$ of finite energy functions vanishing at the boundary $V_0$ of $K$, is closed in $\ch$. By the inequality
$
\|u\|_\infty\le c\sqrt{\ce [u]}\qquad u\in\cf_0
$
(holding for a finite constant $c>0$, see \cite{Kiga} Chapter 2), if $\{u_n\in\cf_0 :n\ge 1\}$ is a sequence such that $\{d u_n\in\ch :n\ge 1\}$ has the Cauchy property, then $\{u_n\in\cf_0 :n\ge 1\}$ is itself a Cauchy sequence in $\cf_0$  with respect to the uniform norm and we may consider its limit $u\in\cf_0$. As the quadratic form $\ce$ comes from a harmonic structure on $K$ (see \cite{Kiga} Example 3.1.5), it is the pointwise monotone limit of bounded quadratic forms on $C(K)$ and, in particular, it is lower semicontinuous. Then, if for a fixed $\varepsilon >0$, $N\geq 1$ is such that $\ce [u_n - u_m]<\varepsilon$, for all $n,m\geq N$, then
\[
\|d u -d u_m\|_\ch^2 =\ce [u -u_m]\le \liminf_n\ce [u_n -u_m]<\varepsilon\qquad m\ge N
\]
so that the sequence $\{d u_n\in\ch :n\ge 1\}$ converges to $d u\in\ch$.
\\
Finally, the space $B^1(K)^\perp$ consists of co-closed forms, which are also closed by Lemma \ref{DifferentialOnOmega1}. The result follows.
\end{proof}

\begin{rem}
(1) An equivalent way to formulate Hodge decomposition theorem is that each cohomology class has a (unique) harmonic representative.
\\
(2) Hodge decomposition allows us to define a gradient $d^*$ on forms:
$$d^*\o=d^*(dU_E+\o_H)=\Delta U_E.$$
Observe that the domain and the range of $d^*$ depend on the corresponding data for $\Delta$.
\\
(3) Even though the $dz_\s$'s are parametrized by lacunas, they are not the dual basis of the lacunas, considered as a basis for the homology vector space, as follows by eq. \eqref{asigmatau}.
\end{rem}

In order to formulate the first and second theorems by de Rham we need to introduce a stronger norm on $\ch$ such that  the integral on elementary paths still makes sense on the closure of $\forme$ w.r.t. such norm.
With $N$ as in Section \ref{QgivesNorm}, we set
$\|\o\|_N=\ce[U_E]^{1/2}+N(k_\bullet)$ on the  square integrable forms $\o=dU_E+\sum_\s k_\s dz_\s$ for which this expression is finite. We write $\ch_N:=\{\o\in\ch:\|\o\|_N<\infty\}$, and note that $\forme\subset\ch_N$.
\begin{lem}
If $\{k_\s\}\in\ell_N(\S)$, then $\sum_\s k_\s dz_\s\in\ch$.
\end{lem}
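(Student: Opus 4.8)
The plan is to show that the partial sums of $\sum_\s k_\s dz_\s$ form a Cauchy sequence in $\ch$, so that the series converges there. Since $\ch$ is a Hilbert space and the $dz_\s$ are pairwise $Q$-orthogonal (Proposition \ref{wclosed}$(i)$), convergence of $\sum_\s k_\s dz_\s$ in $\ch$ is equivalent to the summability of $\sum_\s |k_\s|^2 Q[dz_\s]$. So the whole task reduces to the numerical estimate
\[
\sum_\s |k_\s|^2 Q[dz_\s]<\infty
\]
under the hypothesis $N(k_\bullet)<\infty$.

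First I would plug in the explicit value $Q[dz_\s]=\frac56\left(\frac53\right)^{|\s|}$ from \eqref{dzsigmanorm}, reducing the goal to
\[
\sum_\s |k_\s|^2\left(\tfrac53\right)^{|\s|}<\infty.
\]
Next I would organize the sum by generations: $\sum_{n\ge0}\left(\frac53\right)^n\sum_{|\s|=n}|k_\s|^2$. The key observation is that, for each fixed $n$, $\sum_{|\s|=n}|k_\s|^2\le\big(\sum_{|\s|=n}|k_\s|\big)^2$, and by the definition of $N$ we have $\sum_{|\s|=n}|k_\s|\le\left(\frac35\right)^nN(k_\bullet)$. Hence
\[
\sum_{|\s|=n}|k_\s|^2\le\left(\tfrac35\right)^{2n}N(k_\bullet)^2,
\]
and therefore
\[
\sum_n\left(\tfrac53\right)^n\sum_{|\s|=n}|k_\s|^2\le N(k_\bullet)^2\sum_n\left(\tfrac53\right)^n\left(\tfrac35\right)^{2n}=N(k_\bullet)^2\sum_n\left(\tfrac35\right)^n<\infty,
\]
which is exactly what is needed.

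There is essentially no hard obstacle here: the only subtlety is making sure one is allowed to pass from Cauchy-ness of the partial sums to an actual element of $\ch$, which is immediate since $\ch$ is complete, and that the limit in $\ch$ genuinely has the $k_\s$'s as coordinates in the Hodge-type expansion — but this follows because the $dz_\s$ are an orthogonal system and inner products with each fixed $dz_\t$ (or with each $dU_E$) are continuous on $\ch$. One small point worth spelling out is that the bound $\sum_{|\s|=n}|k_\s|^2\le\big(\sum_{|\s|=n}|k_\s|\big)^2$ is just the elementary inequality $\|x\|_{\ell^2}\le\|x\|_{\ell^1}$ applied within a single generation; combined with the geometric decay coming from the mismatch between the factor $\left(\frac53\right)^n$ in $Q[dz_\s]$ and the factor $\left(\frac35\right)^{2n}$ produced by squaring the $\ell^1$-type bound in the definition of $N$, everything converges with room to spare.
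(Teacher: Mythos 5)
Your proof is correct and follows essentially the same route as the paper: pairwise $Q$-orthogonality of the $dz_\s$, the value $Q[dz_\s]=\frac56\left(\frac53\right)^{|\s|}$, the bound $\sum_{|\s|=n}|k_\s|^2\le\big(\sum_{|\s|=n}|k_\s|\big)^2\le\left(\frac35\right)^{2n}N(k_\bullet)^2$, and summation of the resulting geometric series. The paper's one-line computation yields the same bound $\frac{25}{12}N(k_\bullet)^2$, so there is nothing to add.
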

\begin{proof}
Eq. \eqref{dzsigmanorm} gives
\begin{align*}
\|\sum_\s k_\s dz_\s\|^2_\ch
&=\sum_\s|k_\s|^2\|dz_\s\|^2_\ch
\leq\frac56\sum_{n\geq0}\left(\frac53\right)^n\Big(\sum_{|\s|=n}|k_\s|\Big)^2
 \leq\frac{25}{12}N(k_\bullet)^2\, .
\end{align*}
\end{proof}
Let us consider the norm $N'$ on sequences, dual to the norm $N$: $N'(a)=\displaystyle\sum_{n\ge 0}(3/5)^{n}\sup_{|\s|=n}|a_\s|$.
We shall say that a path $\g\subset K$ has {\it finite effective length} $\l(\g)$ if
\begin{equation}\label{ef_length}
\l(\g):=N'(\int_\g dz_\bullet)<\infty.
\end{equation}
\begin{lem}\label{edgeshavefinitelength}
Edges have finite effective length. Indeed,
$\l(e)\leq(3/5)^{n-1}(3+2n)/6$ if $e\in E_n$.
\end{lem}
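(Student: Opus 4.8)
The goal is to estimate $\ell(e) = N'\!\left(\int_e dz_\bullet\right) = \sum_{m\ge 0}(3/5)^m \sup_{|\t|=m}\left|\int_e dz_\t\right|$ for a fixed edge $e \in E_n$. The plan is first to determine, for each level $m$, which multi-indices $\t$ with $|\t|=m$ can possibly contribute, i.e. for which $\t$ we have $\int_e dz_\t \ne 0$. Since $dz_\t$ is supported in the cell $C_\t$ and is exact in each sub-cell $C_{\t i}$, the integral $\int_e dz_\t$ can be non-zero only when $e$ meets the support of $dz_\t$ nontrivially, which (as in the proof of Lemma \ref{lem:pathestimate}) splits into two cases: either $\t$ is a truncation of the address of $e$ (so $C_\t \supseteq e$, the case $|\t| = m \le n$), or $e \subseteq C_\t$ with $|\t| = m > n$ is impossible since $e\in E_n$ is not contained in any smaller cell — so in fact the only contributions come from $|\t| \le n$, and for each such $m \le n$ there is a bounded number (at most two, the two cells of level $m$ adjacent to $e$) of contributing $\t$'s.

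Next I would bound the individual integrals $\left|\int_e dz_\t\right|$ for $|\t| = m \le n$. Here $e$ lies inside $C_\t$ and $dz_\t$ is exact on $C_\t$ with local potential $z_\t$ obtained by gluing the three harmonic pieces $z_\t^i$; the oscillation of $z_\t$ on $C_\t$ is controlled — on each sub-cell $\osc(z_\t^i) = 1/3$ — and, using the contraction estimate $\osc(f)(C_1)\le \tfrac35\osc(f)(C)$ for harmonic $f$ passing to sub-cells (the inequality recalled right after the definition of oscillation, and used as \eqref{ineq:osc}), one gets $\left|\int_e dz_\t\right| \le \osc_{C_{e}}(z_\t)$ where $C_e$ is the level-$n$ cell, giving a bound of the shape $\tfrac13(3/5)^{n-m-1}$ when $m < n$, and the value $1/3$ (the winding-type value, since $e$ bounds $C_\t$) when $m = n$. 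One must be a little careful at $m=n$: there $e$ is an edge of the perimeter of $C_\t$ and $\int_e dz_\t = -1/3$ by rotational symmetry as already observed in Section \ref{sec:winding}.

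Finally I would assemble the sum: $\ell(e) \le \sum_{m=0}^{n-1}(3/5)^m \cdot \tfrac13(3/5)^{n-m-1} \cdot(\text{number of contributing }\t) + (3/5)^n\cdot \tfrac13$. The first sum has $n$ terms each essentially equal to a constant times $(3/5)^{n-1}$ (the $m$-dependence cancels between $(3/5)^m$ and $(3/5)^{n-m-1}$), so it contributes $O(n\,(3/5)^{n-1})$, and the last term contributes $(3/5)^n/3$; collecting constants carefully gives the stated bound $\ell(e)\le (3/5)^{n-1}(3+2n)/6$. The only real subtlety — and the main thing to get right rather than a genuine obstacle — is the bookkeeping: correctly identifying that for each $m \le n$ exactly the (at most two) level-$m$ cells adjacent to or containing $e$ contribute, and tracking the factor-of-$2$ and the geometric-series constants so that the final numerical bound $(3+2n)/6$ comes out. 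The structural input (support of $dz_\t$, the oscillation contraction, and the winding value $-1/3$ on perimeter edges) is all already available from the preceding sections, so no new machinery is needed.
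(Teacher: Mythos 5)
Your reduction to the levels $|\t|\le n$ is where the argument breaks. You claim that for $|\t|=m>n$ one would need $e\subseteq C_\t$, which is impossible; but $\int_e dz_\t\neq 0$ only requires that the portion of $e$ lying inside $C_\t$ carry a nonzero integral of $dz_\t$. An edge $e\in E_n$, bounding the cell $C_\s$ with $|\s|=n$, contains for every $m>n$ sub-edges of level $m$ which are perimeter edges of the level-$m$ cells inside $C_\s$ adjacent to $e$; for each such $\t$ one has $\int_e dz_\t=\int_{e\cap C_\t}dz_\t=-1/3$ (the rotational-symmetry value recalled at the start of Section \ref{sec:winding}). Hence $\sup_{|\t|=m}\big|\int_e dz_\t\big|=1/3$ for \emph{every} $m\ge n$, and the effective length contains the full geometric tail $\tfrac13\sum_{m\ge n}(3/5)^m=\tfrac12(3/5)^{n-1}$. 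This tail is precisely the ``$3/6$'' in the constant $(3+2n)/6$, while the truncations $|\t|<n$, handled by the oscillation bound exactly as you describe (and as the paper does), account for the ``$2n/6$''. With the tail omitted your assembled sum cannot produce the lemma, and is in fact false as an estimate of $\l(e)$: for $e\in E_0$ it would give $1/3$, whereas $\l(e)=\tfrac13\sum_{m\ge0}(3/5)^m=5/6$, which is exactly the stated bound for $n=0$.

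A secondary point: $N'(a)=\sum_m(3/5)^m\sup_{|\s|=m}|a_\s|$ takes a supremum at each level, not a sum over contributing cells, so no multiplicity factor (your ``at most two'') should appear; inserting it at the levels $m<n$ would in any case overshoot the claimed constant. The correct bookkeeping is the paper's: $\l(e)\le\sum_{k<n}\tfrac13(3/5)^k(3/5)^{n-k-1}+\sum_{k\ge n}\tfrac13(3/5)^k=(3/5)^{n-1}(3+2n)/6$.
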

\begin{proof}
Let $e\in E_n$, and let $C_\s$, $|\s|=n$, be the cell having $e$ as a boundary edge. Then $\int_e d\dg_\t$ is non-zero only if either $\t\geq\s$ or $\t<\s$. It is easy to see that, for $\t\geq\s$, $\int_e d\dg_\t\leq1/3$, such value being attained e.g. for $\t=\s$. For $\t<\s$, one can estimate $\int_e d\dg_\t$ with the oscillation of $\dg_\t$ on $C_\s$, which in turn is estimated by $(3/5)^{|\s|-|\t|-1}\osc_{C_\r}(\dg_\t)$, where $C_\r$ is the cell of level $|\t|+1$ containing $C_\s$. Since $\osc_{C_\r}(\dg_\t)=1/3$, we have $\int_e d\dg_\t\leq1/3\cdot(3/5)^{n-|\t|-1}$. This value is not necessarily attained, but it does e.g. when $e$ is one of the boundary edges for $\ell_\t$,  for the index $\t$ immediately preceding $\s$. Therefore,
\begin{align*}
N'(\int_edz_\bullet)
&=\sum_{k\geq0}\left(\frac35\right)^k\sup_{|\s|=k}|\int_edz_\s|
\leq\sum_{k<n}\frac13\left(\frac35\right)^k\left(\frac35\right)^{n-k-1}
+\sum_{k\geq n}\frac13\left(\frac35\right)^k
=\frac{3+2n}{6}\left(\frac35\right)^{n-1}
\end{align*}
\end{proof}
\begin{thm}\label{12deRham}
The integral  extends by continuity to any $\o=dU_E+\sum_{\s} k_\s dz_\s\in\ch_N$ and any path $\g$ with finite effective length, as
\begin{equation}\label{genintegral1}
\int_\g \o=\int_\g dU_E+\sum_{\s} k_\s\int_\g dz_\s.
\end{equation}
In particular, $k_\t=\sum_{\s\geq\t}A_{\s\t} \int_{\ell_\s}\o$, hence, for $\o\in\ch_N$, the decomposition of  Theorem \ref{cor:Hodge}  is determined by the periods of $\o$. Therefore:
\\
$[$de Rham first  theorem$]$ If $\{c_\s\}\in\ell_N(\S)$,  $\exists\,\o_H$ harmonic in $\ch_N$ such that $\int_{\ell_\s}\o_H=c_\s$.
\\
$[$de Rham  second theorem$]$ If $\o\in\ch_N$ and  $\int_{\ell_\s}\o=0$ for all $\s$, then $\o$ is exact.
\end{thm}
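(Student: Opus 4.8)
The plan is to define the extended line integral directly by formula \eqref{genintegral1}, show its right-hand side converges and is the continuous extension of the integral already available on $\forme$ along elementary paths, and then deduce both de Rham statements from the invertibility of the lower unitriangular period matrix $B$ of Section~\ref{sec:winding}. For the extension, fix $\o=dU_E+\sum_\s k_\s dz_\s\in\ch_N$ and a continuous path $\g$ with $\l(\g)=N'(\int_\g dz_\bullet)<\infty$. The exact part is harmless: $dU_E$ is genuinely exact, hence locally exact with global potential $U_E\in\cf$, so $\int_\g dU_E$ is defined for every continuous path and equals $U_E(\g(1))-U_E(\g(0))$, with $|\int_\g dU_E|\le\osc_K(U_E)\le c\,\ce[U_E]^{1/2}$ (from $\|u\|_\infty\le c\sqrt{\ce[u]}$ on $\cf_0$ plus finite dimensionality of the $0$-harmonic functions). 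For the harmonic part I would use the duality between $N$ and $N'$: since $\sum_{|\s|=n}|k_\s|\le(3/5)^nN(k_\bullet)$,
\[
\sum_\s|k_\s|\,\Big|\int_\g dz_\s\Big|\le N(k_\bullet)\sum_{n\ge0}(3/5)^n\sup_{|\s|=n}\Big|\int_\g dz_\s\Big|=N(k_\bullet)\,\l(\g)<\infty,
\]
so the series in \eqref{genintegral1} converges absolutely and $|\int_\g\o|\le\max\{c,\l(\g)\}\,\|\o\|_N$; in particular $\o\mapsto\int_\g\o$ is a bounded linear functional on $(\ch_N,\|\cdot\|_N)$. That it restricts to Definition~\ref{dfn:integral} on $\forme$ follows from Proposition~\ref{cor:decompBis}, where $\o-\sum_\s k_\s dz_\s=dU_E$ holds in $\ovforme$ with the series $\normaInf{\cdot}$-convergent, together with the $\normaInf{\cdot}$-continuity of $\o\mapsto\int_\g\o$ on $\forme$ for elementary $\g$.

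Next I would compute the periods. Each lacuna $\ell_\t=w_\t(\ell_\emptyset)$ is a closed elementary path made of three level-$(|\t|+1)$ edges, hence has finite effective length by Lemma~\ref{edgeshavefinitelength}; evaluating \eqref{genintegral1} on $\g=\ell_\t$ and using that $\ell_\t$ is closed and $U_E\in C(K)$ annihilates the exact term, leaving $\int_{\ell_\t}\o=\sum_\s k_\s\int_{\ell_\t}dz_\s=\sum_\s B_{\s\t}k_\s$. The interchange of sum and integral is legitimate because $k\in\ell_N(\S)$ (as $\o\in\ch_N$) and $B$, having entries of modulus $\le1$, acts boundedly on $\ell_N(\S)$ by Lemma~\ref{Nestimate}. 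Writing $c_\t:=\int_{\ell_\t}\o$ this reads $c=B^{*}k$ with $c\in\ell_N(\S)$; applying $A^{*}=(B^{*})^{-1}$, again bounded on $\ell_N(\S)$ since $|A_{\s\t}|\le1$ by Lemma~\ref{lem:asigmatau}, gives $k=A^{*}c$, i.e.\ $k_\t=\sum_{\s\ge\t}A_{\s\t}\int_{\ell_\s}\o$. Thus the Hodge decomposition of $\o$ from Theorem~\ref{cor:Hodge} is entirely recovered from its periods.

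The first de Rham theorem is then Proposition~\ref{cor:decomp} transported into $\ch_N$: given $\{c_\s\}\in\ell_N(\S)$, put $k:=A^{*}c$, which lies in $\ell_N(\S)$ by Lemma~\ref{prop:conditions}; by the Lemma immediately preceding the theorem the series $\o_H:=\sum_\s k_\s dz_\s$ converges in $\ch$; it is harmonic because the space of harmonic forms, being the orthogonal complement $B^1(K)^\perp$ (Theorem~\ref{cor:Hodge}), is closed and contains every $dz_\s$; it belongs to $\ch_N$ because $\|\o_H\|_N=N(k_\bullet)<\infty$; and its periods are $\int_{\ell_\t}\o_H=\sum_\s B_{\s\t}k_\s=(B^{*}A^{*}c)_\t=((AB)^{*}c)_\t=c_\t$, using $AB=1$ from Section~\ref{sec:winding}. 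The second de Rham theorem is then immediate: if $\o\in\ch_N$ has $\int_{\ell_\s}\o=0$ for all $\s$, the period formula forces $k_\t=0$ for every $\t$, hence $\o=dU_E$ is exact.

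I expect no single dramatic obstacle here, the substantive estimates being already in place; the delicate points are the two infinite-sum manipulations — interchanging $\int_{\ell_\t}\sum_\s k_\s dz_\s=\sum_\s k_\s\int_{\ell_\t}dz_\s$ and inverting $c=B^{*}k$ to $k=A^{*}c$ — which must be carried out entirely within $\ell_N(\S)$ (this is exactly where Lemmas~\ref{Nestimate}, \ref{lem:asigmatau}, \ref{prop:conditions} are needed), together with verifying that the reconstructed $\o_H$ genuinely lies in $\ch_N$ and is harmonic rather than a merely formal series.
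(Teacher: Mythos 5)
Your proposal is correct and follows essentially the same route as the paper: convergence of the series in \eqref{genintegral1} via the $N$--$N'$ duality, consistency with the elementary-path integral via Proposition \ref{cor:decompBis}, the period formula $c=B^*k$, $k=A^*c$ carried out in $\ell_N(\S)$ exactly as in Proposition \ref{cor:decomp} (using Lemmas \ref{Nestimate}, \ref{lem:asigmatau}, \ref{prop:conditions}), and both de Rham statements as immediate consequences. You merely spell out steps the paper compresses, so there is nothing substantive to add or correct.
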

\begin{proof}
Since $\sum_{\s} |k_\s|\cdot|\int_\g dz_\s|\leq N(k_\bullet)N'(\int_\g dz_\bullet)$ the series $\sum_{\s} k_\s\int_\g dz_\s$ converges. By Proposition \ref{cor:decompBis},   eq. \eqref{genintegral1} extends the integral of smooth forms on elementary paths.
The last statement follows as in the proof of Proposition \ref{cor:decomp}.
\\
De Rham first  theorem: if $k=A^*c$, then $\{k_\s\}\in\ell_N(\S)$, by Lemma \ref{Nestimate}, so we get the thesis by setting $\o_H=\sum_\s k_\s dz_\s$.
\\
De Rham second  theorem is immediate, by $k_\t=\sum_{\s\geq\t}A_{\s\t} \int_{\ell_\s}\o$.
\end{proof}

\subsection{On the existence of non-locally exact forms}
On a manifold, all closed forms are locally exact, namely the difference between closed and exact forms cannot be detected locally. Due to its exotic topology, this is no longer true  on the gasket, as we show below.
\begin{lem}\label{lem:aijk}
Let $f_i$ be the 0-harmonic function on the gasket taking value 1 on the vertex $p_i$ and 0 on the others, and consider the scalar products $a_{ijk}:=Q(df_i,f_jdf_k)$, $i,j,k=0,1,2$. Then
$$
a_{ijk}=
\begin{cases}
1&\text{if}\ i=j=k;\\
-\frac12&\text{if } i=j\ne k\text{ or } i\ne j= k;\\
\frac12&\text{if}\ i=k\ne j;\\
0&\text{if the indices are pairwise different.}
\end{cases}
$$
\end{lem}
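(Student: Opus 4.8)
The plan is to express $a_{ijk}$ as an integral against an energy measure, turn that integral into a combination of energies via the carr\'e du champ identity \eqref{rem:cs}, and then exploit the fact that each $f_i$ is $0$-harmonic, which forces $\ce(f_i,\cdot)$ to depend only on boundary values on $V_0$. First I would observe that, by the definition \eqref{innerProduct} of $Q$ (taking $f=1$, $g=f_i$, $h=f_j$, $k=f_k$, and recalling that all functions involved are real, so conjugation is trivial), one has $a_{ijk}=Q(df_i,f_jdf_k)=\int_K f_j\,d\mu(f_i,f_k)$. Applying \eqref{rem:cs} with $f=f_i$, $g=f_k$, $h=f_j$ then gives
\[
a_{ijk}=\tfrac12\big(\ce(f_i,f_jf_k)-\ce(f_if_k,f_j)+\ce(f_if_j,f_k)\big).
\]

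The \emph{key step} is the identity: for every $g\in\cf$ and every $i$,
\[
\ce(f_i,g)=3\,g(p_i)-\sum_{l=0}^2 g(p_l).
\]
To prove it, write $g=g_0+g_1$, where $g_0:=\sum_l g(p_l)f_l$ is the $0$-harmonic function with the same values on $V_0$ and $g_1:=g-g_0\in\cf$ vanishes on $V_0$. Since $f_i$ is $0$-harmonic, $\ce(f_i,g_1)=0$, hence $\ce(f_i,g)=\sum_l g(p_l)\,\ce(f_i,f_l)$. Now $\ce[f_i]=2$ is the standard normalization of the gasket harmonic structure (a single harmonic-extension step, cf.\ \cite{Kiga}), while $\ce(f_i,f_j)=\ce(f_i,f_k)$ by the $S_3$-symmetry of $\ce$ and $\sum_l\ce(f_i,f_l)=\ce(f_i,1)=0$ because $\sum_l f_l\equiv1$; therefore $\ce(f_i,f_l)=3\delta_{il}-1$, and the identity follows.

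It then remains to evaluate the three energies. In the middle term I would first use the symmetry $\ce(f_if_k,f_j)=\ce(f_j,f_if_k)$ so that the $0$-harmonic factor occupies the first slot. Since $f_m(p_l)=\delta_{ml}$, a product $f_af_b$ evaluated at $p_l$ equals $\delta_{ab}\delta_{al}$, so the boundary identity gives $\ce(f_i,f_jf_k)=\delta_{jk}(3\delta_{ij}-1)$, $\ce(f_j,f_if_k)=\delta_{ik}(3\delta_{ij}-1)$, and $\ce(f_k,f_if_j)=\delta_{ij}(3\delta_{ik}-1)$. Substituting,
\[
a_{ijk}=\tfrac12\big(\delta_{jk}(3\delta_{ij}-1)-\delta_{ik}(3\delta_{ij}-1)+\delta_{ij}(3\delta_{ik}-1)\big),
\]
and checking the five cases $i=j=k$, $i=j\neq k$, $i\neq j=k$, $i=k\neq j$, and all distinct returns $1,\ -\tfrac12,\ -\tfrac12,\ \tfrac12,\ 0$ respectively, which is exactly the asserted table.

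I do not expect a genuine obstacle here: the only external input is the normalization $\ce[f_i]=2$, and the conceptual point is merely that $0$-harmonicity collapses $\ce(f_i,\cdot)$ to a linear functional of the boundary data $g|_{V_0}$. The one place to be careful is to invoke symmetry of $\ce$ in the term $\ce(f_if_k,f_j)$ before applying the boundary identity, since there the harmonic function sits in the second argument and the (non-harmonic) product in the first.
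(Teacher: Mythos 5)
Your proof is correct and follows essentially the same route as the paper: reduce $a_{ijk}$ via the definition of $Q$ and the carr\'e du champ identity \eqref{rem:cs} to energies $\ce(f_m,f_af_b)$, then evaluate these using the fact that for a $0$-harmonic $f_m$ the energy pairing only sees boundary values (your identity $\ce(f_i,g)=3g(p_i)-\sum_l g(p_l)$ is exactly the paper's statement that $\Delta f_i=2\delta_{p_i}-\delta_{p_j}-\delta_{p_k}$). The only difference is that you derive this boundary identity from the normalization $\ce[f_i]=2$, symmetry and $\sum_l f_l\equiv 1$, whereas the paper simply quotes the form of $\Delta f_i$; both computations agree.
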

\begin{proof}
The result directly follows from the definition of $Q$ and eq. \eqref{rem:cs}, together with the relation
$$
2Q(df_i,f_jdf_j) = Q(df_i,d(f^2_j))=\langle\Delta f_i,f^2_j \rangle=
\begin{cases}
2&\text{if}\ i=j;\\
-1&\text{if } i\ne  j;
\end{cases}
$$
where we recall that $\Delta f_i$ is the sum of twice the Dirac measure concentrated on the vertex $p_i$ minus the Dirac measures concentrated on the other vertices.
\end{proof}

\begin{lem}\label{kemptyset}
With the notation of the previous Lemma,
$$
Q(dz_\emptyset,f_0df_1)=\frac1{15}.
$$
\end{lem}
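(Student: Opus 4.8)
The strategy is to represent $dz_\emptyset$ by its three local potentials on the first-level sub-cells $C_i:=C_{\emptyset i}$, $i=1,2,3$, thereby splitting $Q(dz_\emptyset,f_0\,df_1)$ into three ``local'' energy pairings, and then to use self-similarity to turn each of these into a pairing of $0$-harmonic functions on $K$, which is computable by Lemma \ref{lem:aijk}. First I would recall, from the proof of Proposition \ref{wclosed}$(i)$ together with Proposition \ref{omega0norm}, that in $\forme$ one has $dz_\emptyset=\sum_{i=1}^3\chi_i\,d\tilde z^i$, where $z^i$ is the harmonic function on $C_i$ with value $0$ at the unique vertex $w_i(p_i)$ of $C_i$ lying on $\bordo C_\emptyset$ and values $\pm\tfrac16$ on the two vertices of $C_i$ spanning the lacuna edge, $\tilde z^i\in\cf$ is an extension that is locally constant off $C_i$, and $\chi_i\in\cf$ is $1$ on $C_i$ with small support. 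Since $\tilde z^i$ is locally constant wherever $\chi_i$ is non-constant outside $C_i$, the energy measure $\mu(\tilde z^i,f_1)$ is carried by $C_i$, where $\chi_i\equiv 1$; hence
\[
Q(dz_\emptyset,f_0\,df_1)=\sum_{i=1}^3\int_{C_i}f_0\,d\mu(\tilde z^i,f_1).
\]

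Next I would invoke the self-similarity $\ce(f,g)=\tfrac53\sum_j\ce(f\circ w_j,g\circ w_j)$, which for the carré du champ gives $\mu(f,g)|_{C_i}=\tfrac53\,(w_i)_*\mu(f\circ w_i,g\circ w_i)$, using that energy measures of finite-energy functions on $K$ do not charge $V_0$. Because $f_0,f_1$ are $0$-harmonic, their restrictions to each $C_i$ are harmonic, so $f_0\circ w_i$, $f_1\circ w_i$ and $z^i\circ w_i$ are $0$-harmonic on $K$, and therefore
\[
\int_{C_i}f_0\,d\mu(\tilde z^i,f_1)=\tfrac53\,Q\big((f_0\circ w_i)\,d(z^i\circ w_i),\,d(f_1\circ w_i)\big).
\]

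Finally I would expand each of $f_0\circ w_i$, $f_1\circ w_i$, $z^i\circ w_i$ in the basis $\{f_0,f_1,f_2\}$ of $0$-harmonic functions, reading off the coefficients from their boundary values via the harmonic extension rule (the midpoint of $p_jp_k$ carries $\tfrac25 h(p_j)+\tfrac25 h(p_k)+\tfrac15 h(p_\ell)$) and from the values $0,\pm\tfrac16$ of $z^i$. Expanding bilinearly and using $Q(f_a\,df_b,df_c)=Q(df_b,f_a\,df_c)=a_{bac}$ from Lemma \ref{lem:aijk}, each of the three cell pairings becomes an explicit rational number; summing the three contributions, each multiplied by $\tfrac53$, I expect to obtain exactly $\tfrac1{15}$. (The identities $f_0+f_1+f_2=1$ and $\int_{\ell_\emptyset}dz_\emptyset=1$ provide useful consistency checks along the way.)

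The main obstacle is bookkeeping rather than any conceptual difficulty: one must pin down the correct signs of the values $\pm\tfrac16$ on the two lacuna vertices of each $C_i$ — equivalently, fix the convention that $\ell_\emptyset$ is traversed clockwise with $\int_{\ell_\emptyset}dz_\emptyset=1$, which forces $z^i$ to increase by $\tfrac13$ along each lacuna edge — and then carry the nine resulting coefficients and the $a_{bac}$-table through the three cells without arithmetic slips. A secondary, routine point is to justify the locality step, i.e. that each $\mu(\tilde z^i,f_1)$ is carried by $C_i$ and ignores the boundary vertices, so that the cut-offs $\chi_i$ may be dropped.
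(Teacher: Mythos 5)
Your strategy is sound and rests on the same ingredients as the paper's proof: both reduce the pairing to level-one cells via the self-similar scaling of the energy (the factor $5/3$ per level), express the pulled-back harmonic functions in the basis $f_0,f_1,f_2$ by the $\tfrac25$--$\tfrac25$--$\tfrac15$ extension rule, and finish with Lemma \ref{lem:aijk}. The organizational difference is that the paper first uses the $2\pi/3$-rotation invariance of $dz_\emptyset$ to replace $f_0df_1$ by the symmetrized sum $\tfrac13\sum_i f_idf_{i+1}$, which reduces everything to the single cell $C_1$, where $dz_\emptyset\circ w_1=d\bigl(\tfrac16(-f_0+f_2)\bigr)$, and then a reflection symmetry kills most terms, leaving only $\tfrac2{15}\langle\Delta f_2,g\rangle$ with $\langle\Delta f_2,g\rangle=\tfrac12$; you instead keep $f_0df_1$ fixed and decompose $dz_\emptyset$ over the three cells, which avoids any symmetry tricks but costs three separate cell computations. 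Your handling of the two delicate points is correct: strong locality does make $\mu(\tilde z^i,f_1)$ live on $C_i$ (and energy measures charge no vertices), and the clockwise convention with $\int_{\ell_\emptyset}dz_\emptyset=1$ fixes the local potentials as $z^0\circ w_0=\tfrac16(f_1-f_2)$, $z^1\circ w_1=\tfrac16(-f_0+f_2)$, $z^2\circ w_2=\tfrac16(f_0-f_1)$.

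What is actually missing is the computation itself, which is the whole content of the lemma: as written you only ``expect'' the answer $\tfrac1{15}$. Carrying your scheme through, the three contributions are
\begin{equation*}
\frac53\int_K(f_0\circ w_0)\,d\mu(z^0\circ w_0,\,f_1\circ w_0)=\frac53\cdot\frac1{20}=\frac1{12},\qquad
\frac53\cdot\frac1{100}=\frac1{60},\qquad
\frac53\cdot\Bigl(-\frac1{50}\Bigr)=-\frac1{30}
\end{equation*}
for $C_0$, $C_1$, $C_2$ respectively, and $\tfrac1{12}+\tfrac1{60}-\tfrac1{30}=\tfrac1{15}$, so your route does close. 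Be aware, though, that the nine-coefficient bookkeeping is genuinely error-prone (a single slipped sign or product changes the final value), which is presumably why the paper symmetrizes first; to count as a proof you must exhibit this arithmetic explicitly.
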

\begin{proof}
Since  $dz_\emptyset$ is invariant under $2\pi/3$ rotations, we have $Q(dz_\emptyset,f_0df_1)=Q(dz_\emptyset,f_idf_{i+1})$ for any $i=0,1,2$, hence
\begin{align*}
Q(dz_\emptyset,f_0df_1)
&=\frac13\sum_{i=0,1,2}Q(dz_\emptyset,f_idf_{i+1})
=\frac59\sum_{i,j=0,1,2}Q(dz_\emptyset\circ w_j,(f_idf_{i+1})\circ w_j)\\
&=\frac53\sum_{i=0,1,2}Q(dz_\emptyset\circ w_1,(f_idf_{i+1})\circ w_1),
\end{align*}
where, in the last equality, we used the fact that $\sum_{i=0,1,2}Q(dz_\emptyset\circ w_j,(f_idf_{i+1})\circ w_j)$ does not depend on $j$.
A simple computation shows that
$dz_\emptyset\circ w_1=dg$, with $g=\frac16(-f_0+f_2)$,
$f_0\circ w_1=\frac15(2f_0+f_2)$,
$f_1\circ w_{1}=\frac15(2+3f_1)$,
$f_2\circ w_{1}=\frac15(f_0+2f_2)$.
As a consequence,
\begin{align*}
Q(dz_\emptyset,f_0df_1)
&=\frac1{15}
Q(dg,2df_0+4df_2+3f_1df_0+6f_1df_2+6f_0df_1+3f_2df_1+ \\
&\qquad  +2f_0df_0+2f_2df_2+f_0df_2+4f_2df_0)\\
&=\frac1{15}Q(dg,2df_2+3f_1df_2-3f_2df_1+ 3f_2df_0)\\
&=\frac2{15}\langle\Delta f_2,g\rangle+\frac1{30}Q(d(-f_0+f_2),f_1df_2-f_2df_1+ f_2df_0)
\end{align*}
where in the second equality we used the invariance of the scalar product under the reflection of the gasket which fixes $p_1$. By  Lemma \ref{lem:aijk} the second summand vanishes, while $\langle\Delta f_2,g\rangle=1/2$, proving the thesis.
\end{proof}

\begin{prop}\label{Prop:NonLocEx}
The form $f_0df_1$ is not locally exact, indeed all the coefficients $k_\s$ of the decomposition of Theorem \ref{cor:Hodge} are non-zero.
\end{prop}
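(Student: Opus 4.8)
The plan is to compute \emph{all} the coefficients $k_\s$ appearing in the Hodge decomposition $\o=dU_E+\sum_\s k_\s dz_\s$ of $\o:=f_0df_1$ (Theorem~\ref{cor:Hodge}) and to show that each one is non-zero; non-local-exactness of $\o$ then follows at once, since by Proposition~\ref{wclosed}$(ii)$ (cf.\ also the proof of Lemma~\ref{DifferentialOnOmega1}) a form is locally exact exactly when the family $\{k_\s\}$ is finitely supported. Because the summands of the Hodge decomposition are pairwise $Q$-orthogonal and each $dz_\s$ is $Q$-orthogonal to all exact forms (Proposition~\ref{wclosed}$(i)$), testing $\o$ against $dz_\s$ gives, using \eqref{dzsigmanorm},
\[
k_\s=\frac{Q(\o,dz_\s)}{Q[dz_\s]}=\frac65\Big(\frac35\Big)^{|\s|}Q(f_0df_1,dz_\s),
\]
so everything is reduced to the numbers $Q(f_0df_1,dz_\s)$.

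Next I would exploit self-similarity. The form $dz_\s$ is supported in the cell $C_\s$, and its local potentials satisfy $z_\s\circ w_\s=z_\emptyset$ on each sub-cell; combining strong locality of $\ce$ (so that $Q(\cdot,dz_\s)$ only sees the restriction to $C_\s$, the boundary vertices carrying no energy mass) with the self-similar scaling $\ce_{C_\s}[u]=(5/3)^{|\s|}\ce[u\circ w_\s]$ of the Dirichlet form on cells, one obtains
\[
Q(f_0df_1,dz_\s)=\Big(\tfrac53\Big)^{|\s|}\,Q\big((f_0\circ w_\s)\,d(f_1\circ w_\s),\,dz_\emptyset\big).
\]
Now $f_i\circ w_\s$ is again $0$-harmonic, so $f_i\circ w_\s=\sum_r [M_\s]_{ri}f_r$, where $M_\s$ is the $3\times3$ matrix $[M_\s]_{ri}:=f_i(w_\s(p_r))$ recording the harmonic coordinates of the three boundary vertices of $C_\s$. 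By bilinearity the right-hand side becomes $(5/3)^{|\s|}\sum_{r,m}[M_\s]_{r0}[M_\s]_{m1}\,Q(f_rdf_m,dz_\emptyset)$, and it remains to evaluate the nine numbers $Q(f_rdf_m,dz_\emptyset)$. Here Lemma~\ref{kemptyset} gives $Q(f_0df_1,dz_\emptyset)=\tfrac1{15}$; rotational invariance of $(K,\ce)$ (which cyclically permutes the $f_i$ and preserves $dz_\emptyset$ with orientation) propagates this cyclically; $Q(f_idf_i,dz_\emptyset)=0$ because $f_idf_i=\tfrac12 d(f_i^2)$ is exact while $dz_\emptyset$ is co-closed; and the relation $df_0+df_1+df_2=0$ (from $f_0+f_1+f_2\equiv1$), paired against $dz_\emptyset$ after multiplying by $f_0$, forces the remaining three values to equal $-\tfrac1{15}$. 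In short $Q(f_rdf_m,dz_\emptyset)=\tfrac1{15}\varepsilon_{rm}$, with $\varepsilon$ the cyclic sign ($+1$ on $(0,1),(1,2),(2,0)$, $-1$ on their transposes, $0$ on the diagonal).

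The remainder is linear algebra. The quantity $\sum_{r,m}[M_\s]_{r0}[M_\s]_{m1}\varepsilon_{rm}$ is the determinant of the matrix whose columns are the $0$-th and $1$-st columns of $M_\s$ and the vector $(1,1,1)^{\mathrm T}$; since every row of $M_\s$ sums to $1$ (again $f_0+f_1+f_2\equiv1$), that last column equals the sum of the three columns of $M_\s$, hence the determinant is simply $\det M_\s$. Therefore $k_\s=\tfrac2{25}\det M_\s$. Finally, directly from $[M_\s]_{ri}=f_i(w_\s(p_r))$ and harmonicity one reads off the composition rule $M_{\s i}=M_iM_\s$, so $\det M_\s=\prod_{j=1}^{|\s|}\det M_{\s_j}$, and the $\tfrac15(2,2,1)$ harmonic-extension rule yields the one-line computation $\det M_0=\det M_1=\det M_2=\tfrac3{25}$. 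Hence $k_\s=\tfrac2{25}\big(\tfrac3{25}\big)^{|\s|}\neq0$ for every $\s$, which proves the Proposition (and recovers $Q(f_0df_1,dz_\emptyset)=\tfrac1{15}$ at $\s=\emptyset$ as a consistency check). The one genuinely delicate point is the self-similar scaling identity for $Q(\cdot,dz_\s)$ in the second paragraph: one must justify, via strong locality and the fact that energy measures do not charge finite sets, that the cut-off functions used to realize $dz_\s$ as an element of $\ch$ and the boundary of $C_\s$ contribute nothing. An alternative, avoiding this, would be to compute the periods $\int_{\ell_\s}\o$ and $\int_{\perim C_\s}\o$ directly and invert by $k_\t=\sum_{\s\geq\t}A_{\s\t}\int_{\ell_\s}\o$, but that seems more laborious.
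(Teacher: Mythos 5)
Your proposal is correct, and its skeleton coincides with the paper's: you read off $k_\s=Q(f_0df_1,dz_\s)/Q[dz_\s]$ from the orthogonality statements of Proposition \ref{wclosed} and formula \eqref{dzsigmanorm}, you use the self-similar scaling identity $Q(f_0df_1,dz_\s)=(5/3)^{|\s|}Q\big((f_0\circ w_\s)\,d(f_1\circ w_\s),dz_\emptyset\big)$, and you feed in Lemma \ref{kemptyset}; these are exactly the paper's three ingredients (the scaling identity is also invoked in the paper without further justification, so the ``delicate point'' you flag is at the same level of rigour as the original). Where you genuinely diverge is the endgame. The paper stays qualitative: it notes that $\a(g,h)=Q(dz_\emptyset,g\,dh)$ is antisymmetric on $0$-harmonic functions, vanishes exactly when $g,h,1$ are linearly dependent, and is nonzero on $(f_0\circ w_\s,f_1\circ w_\s)$ because $u\mapsto u\circ w_\s$ is injective on harmonic functions. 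You instead compute everything: the nine pairings $Q(f_rdf_m,dz_\emptyset)=\tfrac1{15}\varepsilon_{rm}$ (your derivation via rotational symmetry of $dz_\emptyset$, $f_i\,df_i=\tfrac12 d(f_i^2)$ in $\ch$ by strong locality, and $df_0+df_1+df_2=0$ is sound), the determinant identity using that the rows of $M_\s$ sum to $1$, the cocycle rule $M_{\s i}=M_iM_\s$, and $\det M_i=3/25$. This buys strictly more than the paper states: the closed formula $k_\s=\tfrac2{25}\big(\tfrac3{25}\big)^{|\s|}$, which exhibits the geometric decay of the coefficients (consistent with $\{k_\s\}\in\ell_N(\S)$) and recovers Lemma \ref{kemptyset} at $\s=\emptyset$; the price is the extra linear algebra, which the paper's independence argument bypasses. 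As a cross-check, for $|\s|=1$ the paper's own data $f_0\circ w_1=\tfrac15(2f_0+f_2)$, $f_1\circ w_1=\tfrac15(2+3f_1)$ give $\a(f_0\circ w_1,f_1\circ w_1)=\tfrac1{125}=\tfrac1{15}\det M_1$, hence $k_\s=\tfrac6{625}$, in agreement with your formula.
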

\begin{proof}
Set $\a(g,h)=Q(dz_\emptyset,gdh)$. Since $dz_\emptyset$ is harmonic,
$$\a(g,h)=Q(dz_\emptyset,gdh)=Q(dz_\emptyset,d(gh))-Q(dz_\emptyset,hdg)=-Q(dz_\emptyset,hdg)=-\a(h,g).$$
Restricting this bilinear form to 0-harmonic functions, we get a bilinear antisymmetric form on $\br^3$  such that $\a(g,const)=0$ for any $g$.  Moreover it is non-trivial since, by Lemma \ref{kemptyset}, $\a(f_0,f_1)=1/15$. As a consequence, $\a(g,h)=0$ {\it iff} $ag+bh=1$, for some constants $a,b$. For any index $\s$ we get
$$
Q(dz_\s,f_0df_1)=\left(\frac53\right)^{|\s|}Q(dz_\emptyset,f_0\circ w_\s d(f_1\circ w_\s))
=\left(\frac53\right)^{|\s|}\a(f_0\circ w_\s ,f_1\circ w_\s).
$$
By harmonicity of $f_i$, the map $f_i\to f_i\circ w_\s$ is injective and linear, therefore
$\a(f_0 ,f_1)\neq0$ $\Leftrightarrow $ $f_0$ and $f_1$ do not generate constants $\Leftrightarrow $ $f_0\circ w_\s$ and $f_1\circ w_\s$ do not generate constants $\Leftrightarrow $ $\a(f_0\circ w_\s ,f_1\circ w_\s)\neq0$.
Finally, by Theorem \ref{cor:Hodge}, we have
$$
Q(dz_\s,f_0df_1)=k_\s Q[dz_\s],
$$
namely $k_\s\neq0$ for any $\s$. 
\end{proof}

%%%%%%%%%
\section{Potentials of smooth 1-forms}\label{Potentials}
%%%%%%%%%
The first aim of this section is to prove a de Rham duality Theorem for locally exact forms, namely when only finite linear combinations of exact forms and $dz_\s$'s are considered. In this ``algebraic'' case, the integral is defined for any path in $K$, and locally exact forms are in one to one correspondence with suitable affine potentials (up to additive constants) on a suitable pro-covering space.

%%%%%%%%%
\subsection{Uniform coverings of the Sierpinski gasket}\label{extendedIntegral}
%%%%%%%%%
Berestovskii and Plaut introduced a notion of Uniform Universal Cover for a suitable family of uniform spaces (such spaces are called coverable).
As explained in \cite{PlWi}, in the case of a connected metric space $X$, the inverse system giving rise to the Uniform Universal Cover $\widetilde X$ consists in a tower $\{X_\eps\}_{\eps>0}$ of regular coverings, where $\eps$ corresponds to the size of the cycles that are unfolded in the corresponding covering. When the space is also geodesic, the equivalence class of the covering actually changes only for a discrete set in $(0,\infty)$. For the gasket $K$ of side 1 endowed with the geodesic metric induced by the embedding in $\br^2$, the sequence is $\{\eps_n=const\cdot 2^{-n}\}$,   as can be easily deduced from the content of Section 7 of \cite{BerPla}. The projective limit of the groups $\deck(\KK_n)$, which is denoted by $\delta_1(K)$, and called the deck group of $K$ in \cite{BerPla}, coincides with the  {\it \v{C}ech homotopy group} $\check{\pi}_1(K)$ of $K$, cf. \cite{ADTW}, Proposition 2.8.

We now describe the coverings  $\widetilde{K}_n=K_{\eps_n}$, $n\in\bn$, of the gasket $K$.
Let us recall that, for any $n$,  $K$ can be written as
$
K=\bigcup_{|\sigma|=n} w_\sigma(K).
$
If $T$ is the  convex hull of $K$ in the plane, and $T_n=\bigcup_{|\s|=n}w_\s(T)$, $\KK_n$ may be seen as the regular  covering of $K$ induced by the universal covering $\wt{T}_n$ of $T_n$  via the embedding $\iota_n : K \hookrightarrow T_n$.
Due to the simple connectedness of $\widetilde T_n$, the local potentials $f_\s$, $|\s|=n$, of a (not necessarily smooth) $n$-exact form $\o$ glue together to form a continuous potential  $f_\o$ of $\tilde\o$ on $\KK_n$.
If $\widetilde\o$ is the $\deck(\KK_n)$-periodic form obtained by lifting $\o$ to $\KK_n$, we clearly have
\begin{equation}\label{eq:n-primitive}
\int_{\g}\o=\int_{\tilde\g}\tilde\o=f_\o(\tilde\g(1))-f_\o(\tilde\g(0)).
\end{equation}

\begin{dfn} (Affine functions)
Let $G$ be a topological group acting on a space $X$. A continuous function $f$ on $X$ is {\it $G$-affine} if there exists a continuous group homomorphism
$\f:G\to(\br,+)$ such that $f(gx)=f(x)+\f(g)$ for all $(g,x)\in G\times X$.
\end{dfn}

 Let us observe that, since the group homomorphisms $\f$ associated to affine functions are valued in the {\it abelian} group $(\br,+)$, they vanish on commutators. In particular, let $[\deck(\KK_n),\deck(\KK_n)]$ and $\G_n:=\ab(\deck(\KK_n))=\deck(\KK_n)/[\deck(\KK_n),\deck(\KK_n)]$ be the commutator subgroup and   the abelianization, respectively, of the group $\deck(\KK_n)$. Then, a $\deck(\KK_n)$-affine function on $\KK_n$ can be considered as a $\G_n$-affine function on the quotient space
\[
\LL_n :=\KK_n/[\deck(\KK_n),\deck(\KK_n)]\, ,
\]
which is an abelian covering $(\LL_n,p_n,K)$ of $K$  (cf. e.g. \cite{Sieradski}, p. 423, or \cite{Szamuely}, Theorem 2.2.10) such that
$\deck(\LL_n) = \G_n$.
The latter  is a free abelian group with as many generators as the number of lacunas $\ell_\s$, $|\s|\leq n-1$. Let us  mention that the abelian coverings $\LL_n$,  as well as their non-abelian counterparts $\KK_n$, are fractafolds in the sense of  Strichartz  \cite{Stri}  (see also  \cite{St09,StriTepla}).
 See figure \ref{FigFund} for a portion of $\LL_2$, which is an example of a fundamental domain in the sense of Proposition \ref{lem:sametop}. Notice that $x_i$ and $x_i'$ project to the same point on $K$.

% figura
     \begin{figure}[ht]
 	 \centering
	 \psfig{file=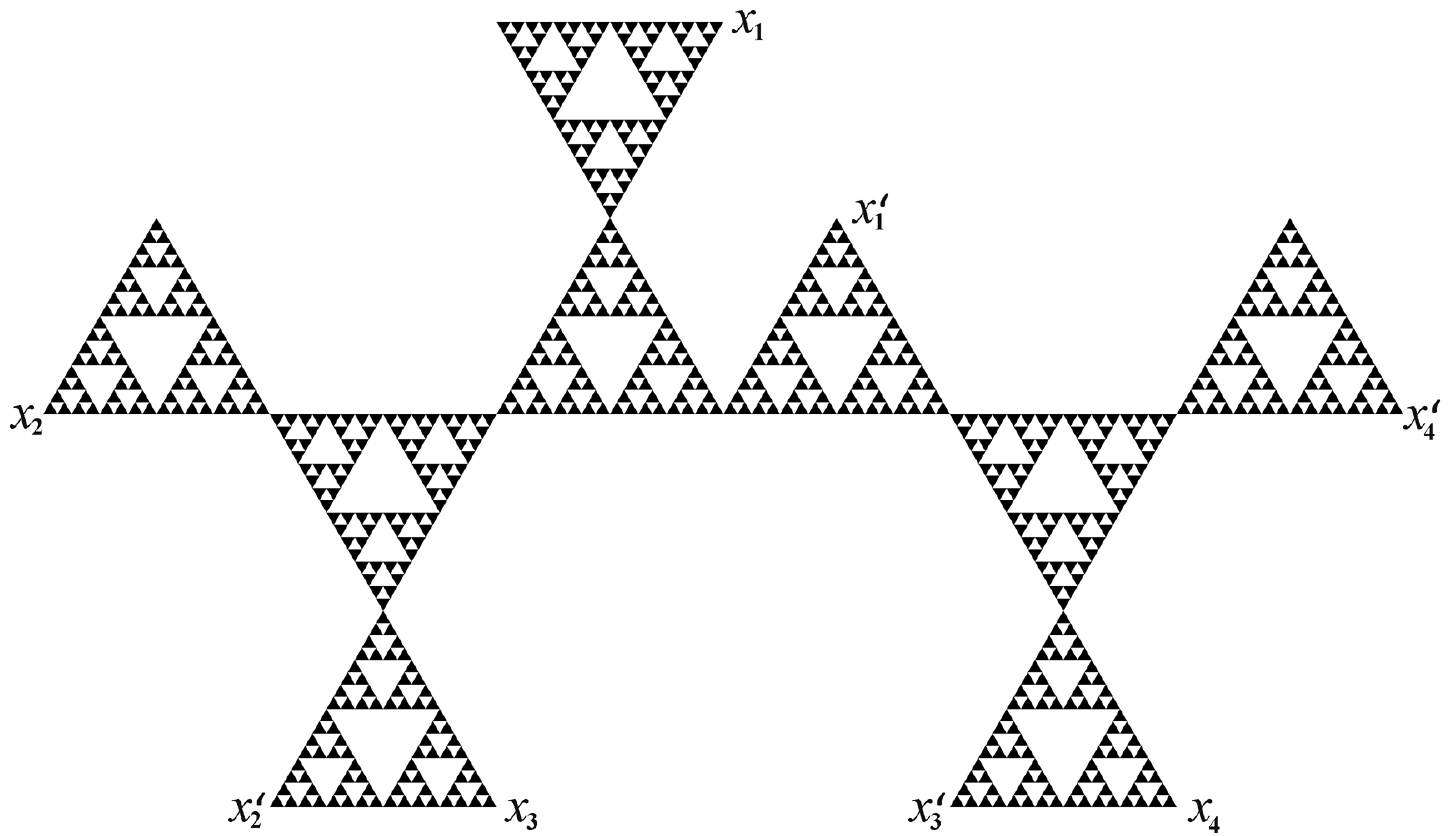,height=2.4in}
	 \caption{A fundamental domain for $\LL_2$.}
	 \label{FigFund}
     \end{figure}

\begin{lem}
The above constructed potential $f_\o$ of an $n$-exact topological 1-form $\o$ is a $\deck(\KK_n)$-affine function on the covering space $\KK_n$, hence it can be considered as a $\Gamma_n$-affine function on the abelian covering space  $\LL_n$.
\end{lem}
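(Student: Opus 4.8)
The plan is to produce explicitly the group homomorphism $\f_\o\colon\deck(\KK_n)\to(\br,+)$ that certifies $f_\o$ is $\deck(\KK_n)$-affine, and then to descend both $f_\o$ and $\f_\o$ to $\LL_n$ using that the target $(\br,+)$ is abelian. The two facts I would rely on are already available. Writing $q_n\colon\KK_n\to K$ for the covering projection, the lifted form $\widetilde\o=q_n^*\o$ is invariant under each deck transformation $g\in\deck(\KK_n)$, since $q_n\circ g=q_n$: a path $g\tilde\g$ in $\KK_n$ projects to the same path in $K$ as $\tilde\g$, so by \eqref{eq:n-primitive} one gets $\int_{g\tilde\g}\widetilde\o=\int_{q_n\circ\tilde\g}\o=\int_{\tilde\g}\widetilde\o$. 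And \eqref{eq:n-primitive} also tells us that $f_\o$ is a genuine potential of $\widetilde\o$, namely $\int_{\tilde\g}\widetilde\o=f_\o(\tilde\g(1))-f_\o(\tilde\g(0))$ for every path $\tilde\g$; note that $\widetilde\o$ is a locally exact topological $1$-form on $\KK_n$, so its line integral along arbitrary continuous paths is defined.

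Combining these, for a fixed $g$ and an arbitrary path $\tilde\g$ in $\KK_n$ I would write
$$f_\o(g\tilde\g(1))-f_\o(g\tilde\g(0))=\int_{g\tilde\g}\widetilde\o=\int_{\tilde\g}\widetilde\o=f_\o(\tilde\g(1))-f_\o(\tilde\g(0)),$$
so the continuous function $x\mapsto f_\o(gx)-f_\o(x)$ takes equal values at the endpoints of every path; as $\KK_n$ is path-connected this function is constant, and I would call the constant $\f_\o(g)$. Evaluating $f_\o((gh)x)=f_\o\big(g(hx)\big)$ in the two ways then yields $\f_\o(gh)=\f_\o(g)+\f_\o(h)$, so $\f_\o$ is a homomorphism, continuous since $\deck(\KK_n)$ is discrete; this establishes that $f_\o$ is $\deck(\KK_n)$-affine.

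Finally, since $(\br,+)$ is abelian, $\f_\o$ vanishes on the commutator subgroup $[\deck(\KK_n),\deck(\KK_n)]$, hence $f_\o(gx)=f_\o(x)$ for $g$ in that subgroup; therefore $f_\o$ is constant on its orbits and factors through the quotient map $\KK_n\to\LL_n$, giving a continuous function on $\LL_n$, while $\f_\o$ factors through $\G_n=\ab(\deck(\KK_n))$ to a homomorphism $\bar\f_\o\colon\G_n\to(\br,+)$. Reading $f_\o(gx)=f_\o(x)+\f_\o(g)$ downstairs gives exactly $\G_n$-affineness of the descended function with respect to $\bar\f_\o$.

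The argument is essentially soft, and I expect no real obstacle; the one step that actually uses the structure at hand is the constancy of $x\mapsto f_\o(gx)-f_\o(x)$, which is where the $\deck(\KK_n)$-invariance of $\widetilde\o$ and the potential identity \eqref{eq:n-primitive} must be played off against each other, everything else being a formal cocycle computation together with the standard fact that homomorphisms into an abelian group kill commutators.
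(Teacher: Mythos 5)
Your proposal is correct and follows essentially the same route as the paper: deck-invariance of the lifted form (via \eqref{eq:n-primitive}) gives that $f_\o(gx)-f_\o(x)$ is independent of $x$, the telescoping identity gives the homomorphism property, continuity is automatic on a discrete group, and the descent to $\LL_n$ via vanishing of $\f_\o$ on commutators is exactly the observation the paper makes just before the lemma. Your version merely makes the path-connectedness step and the factorization through $\KK_n/[\deck(\KK_n),\deck(\KK_n)]$ explicit, which the paper leaves implicit.
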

\begin{proof}
As already observed, the r.h.s. in (\ref{eq:n-primitive}) is clearly $\deck(\KK_n)$-invariant, namely
$$
f(x)-f(x_0)=f(gx)-f(gx_0),\quad \forall g\in\deck(\KK_n),
$$
or, equivalently, $f(gx_0)-f(x_0)=f(gx)-f(x)$, namely the quantity $\f(g)=f(gx)-f(x)$ only depends on the group element $g$, and gives rise to a function on the group $\deck(\KK_n)$, which is automatically continuous as this group is discrete. Moreover, for $g,h\in \deck(\KK_n)$,
$$
\f(gh)=f(ghx)-f(x)=\left(f(ghx)-f(hx)\right)+\left(f(hx)-f(x)\right)=\f(g)+\f(h),
$$
that is $\f$ is a homomorphism from $\deck(\KK_n)$ to $(\br,+)$.
\end{proof}

The family $\big\{(\LL_n,p_n,K):n\in\bn\big\}$ is  projective too, and we denote by $\widetilde L$ the projective limit space.
The projective limit $\G$ of the groups $\G_n$ is the direct product of countably many copies of $\bz$, where generators can be identified with lacunas, and coincides with the first {\it \v{C}ech homology group} (cf. e.g. \cite{EiSt}, Theorem X.3.1, p. 261), which we shall denote  by $\check{H}_1(K)$.
The group  $\check{\pi}_1(K)$ projects surjectively on $\check{H}_1(K)$.

\begin{dfn}\label{Gamma-affine}
We call {\it  \UUAC} of $K$ the projective limit $\LL=\displaystyle\lim_{\leftarrow}\LL_n$, topologized by the projective limit topology.
\end{dfn}

We list below some properties of the spaces $\KK$ and $\LL$ that are needed in the sequel. We refer to \cite{BerPla} for  other interesting properties.

\begin{prop}
\label{Properties}
\item{$(i)$} $\KK$ and $\LL$ have  the unique path-lifting property.
\item{$(ii)$} $\KK$ and $\LL$ are path-wise connected.
\item{$(iii)$} $\G$ is the direct product of countably many copies of $\bz$.
\end{prop}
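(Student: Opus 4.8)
My plan is to reduce all three statements to standard covering‑space theory at each finite level $\KK_n,\LL_n$ and then to propagate the properties through the projective limit. As recalled above, each $\KK_n$ is a connected covering of $K$ — it is the pull‑back along the inclusion $\iota_n\colon K\hookrightarrow T_n$ of the universal covering $\wt{T}_n\to T_n$, and $\iota_{n*}\colon\pi_1(K)\to\pi_1(T_n)$ is onto, since each generator of $\pi_1(T_n)$, a loop around a triangular hole of the planar region $T_n$, is homotopic in $T_n$ to a loop lying on $K$ around the corresponding lacuna — while each $\LL_n$ is a quotient of $\KK_n$, hence connected too. Since $K$ is compact, connected and locally path‑connected, every covering map of $K$ has the unique path‑lifting property and every connected covering space of $K$ is path‑connected; thus $(i)$ and $(ii)$ hold at each finite level.

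For $(i)$ I would lift paths componentwise. Given a path $\g$ in $K$ and a point $\tilde x=(\tilde x_n)_n$ of $\KK$ over $\g(0)$, let $\tilde\g_n$ be the unique lift of $\g$ to $\KK_n$ with $\tilde\g_n(0)=\tilde x_n$. The bonding map $\KK_{n+1}\to\KK_n$ carries $\tilde\g_{n+1}$ to a lift of $\g$ starting at $\tilde x_n$, hence to $\tilde\g_n$; so the family $(\tilde\g_n)_n$ is coherent and assembles into a lift $\tilde\g$ of $\g$ to $\KK$, continuous for the projective limit topology, with $\tilde\g(0)=\tilde x$. Any lift of $\g$ starting at $\tilde x$ has $\tilde\g_n$ as its $n$‑th component, so it coincides with $\tilde\g$; this gives the unique path‑lifting property of $\KK$, and the argument for $\LL$ is identical.

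For $(ii)$ I would first note that the compatible quotient maps $\KK_n\to\LL_n$ induce a continuous surjection $\KK\to\LL$: the induced tower of fibres has surjective bonding maps, because commutator subgroups map onto commutator subgroups under the surjections $\deck(\KK_{n+1})\to\deck(\KK_n)$, and an $\bn$‑indexed tower of surjections of nonempty sets has nonempty limit. Hence $\LL$ is path‑connected once $\KK$ is, and it suffices to treat $\KK$. The clean route is to invoke Berestovskii--Plaut: $\KK$ is the Uniform Universal Cover of the compact \emph{geodesic} space $(K,d)$, with $d$ the length metric inherited from $\br^2$, and the Uniform Universal Cover of a geodesic space is again a geodesic space, in particular path‑connected. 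A self‑contained alternative is a telescoping argument — join $\tilde x_1$ to $\tilde y_1$ in $\KK_1$, project to a path in $K$, lift to $\KK_2$ from $\tilde x_2$, correct its endpoint (which lies over $\tilde y_1$) by a loop representing an element of $\ker(\deck(\KK_2)\to\deck(\KK_1))$, and iterate — but one must then check that the corrections can be performed ``at the far end'' and that, since the lacunas of order $n$ have diameter $O(2^{-n})$, the resulting sequence of paths in $K$ converges to a path whose lift from $\tilde x$ terminates at $\tilde y$. I expect precisely this coherence/convergence bookkeeping to be the only genuine difficulty in the Proposition, and short of carrying it out I would simply cite \cite{BerPla}.

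Finally $(iii)$ is the evaluation of a projective limit of free abelian groups. As recalled earlier, $\G_n=\deck(\LL_n)=\ab(\deck(\KK_n))$ is free abelian with basis the lacunas $\ell_\s$, $|\s|\le n-1$, and the bonding homomorphism $\G_{n+1}\to\G_n$, induced by $\deck(\KK_{n+1})\to\deck(\KK_n)$, sends the basis element attached to $\ell_\s$ to the same lacuna when $|\s|\le n-1$ and to $0$ when $|\s|=n$, since loops around order‑$n$ lacunas are not unfolded in $\KK_n$; thus it is the coordinate projection $\bz^{\{|\s|\le n\}}\to\bz^{\{|\s|\le n-1\}}$. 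The projective limit of such a tower is the full product $\prod_{\s\in\Sigma}\bz$, a countable product of copies of $\bz$ — which is, as recalled, the first \v{C}ech homology group $\check{H}_1(K)$. This establishes the Proposition.
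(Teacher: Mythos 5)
Your proposal is correct, and it differs from the paper mainly in how much it does by hand. The paper's proof is essentially a string of citations: unique path-lifting for $\KK$ is Corollary 74 of \cite{BerPla}, path-connectedness of $\KK$ is Corollary 83 there (using that the geodesic metric makes $K$ uniformly locally path-wise connected), the corresponding properties of $\LL$ are deduced from the asserted surjectivity of $\KK\to\LL$, and $(iii)$ is attributed to the definition of $\G$ together with Section 7 of \cite{BerPla}. You instead prove $(i)$ directly by componentwise lifting through the projective limit (unique path lifting for each covering $\KK_n\to K$, coherence under the bonding maps, continuity via the universal property of the limit topology), you justify the surjectivity of $\KK\to\LL$ that the paper only asserts (surjective deck homomorphisms carry commutator subgroups onto commutator subgroups, so the tower of fibres has surjective bonding maps and nonempty limit), and you compute $(iii)$ explicitly by identifying the bonding maps $\G_{n+1}\to\G_n$ with the coordinate projections killing the order-$n$ lacunas, whence $\G\cong\prod_{\s\in\Sigma}\bz$ — which matches the paper's earlier description of $\G$ as $\check H_1(K)$. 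For the one genuinely delicate point, path-connectedness of $\KK$, you fall back, as the paper does, on \cite{BerPla}; your sketched telescoping alternative correctly identifies the coherence/convergence bookkeeping as the real difficulty, and delegating it to the citation is exactly the paper's choice. The only caveat worth recording is that your argument for $(i)$ implicitly identifies the Uniform Universal Cover $\KK$ with $\lim_{\leftarrow}\KK_n$ carrying the projective limit topology; this is how the paper itself presents the construction for the compact geodesic space $K$ (the discrete tower $K_{\eps_n}$), and for $\LL$ it is the definition, so nothing is lost, but the identification should be stated when invoking the Berestovskii--Plaut results alongside your limit-theoretic arguments.
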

\begin{proof}
Property $(i)$ for $\KK$ follows by Corollary 74 in \cite{BerPla}.
Property $(ii)$ for $\KK$ follows by Corollary 83 in \cite{BerPla}: indeed, since $K$ is geodesic, it is uniformly (locally) path-wise connected (cf. Definition 66 in \cite{BerPla}).
The corresponding properties of $\LL$ follow, since $\KK$ projects surjectively on $\LL$.
Property $(iii)$ follows by the definition of $\G$ and \cite{BerPla}, Section 7.
\end{proof}

\begin{lem}\label{finitelyAffine}
For any $\G$-affine function $f$ on $\LL$ there exists $n\in\bn$ and a $\G_n$-affine function $f_n$ on $\LL_n$ such that $f_n$ lifts to $f$.
\end{lem}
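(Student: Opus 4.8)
The statement asserts that every $\G$-affine function $f$ on $\LL$ factors through some finite level $\LL_n$. The natural strategy is to extract the data of $f$ from the homomorphism $\f:\G\to(\br,+)$ it determines, show that such a homomorphism must itself factor through a finite quotient $\G_n$, and then reconstruct $f_n$ on $\LL_n$ from the factored homomorphism, checking that $f_n$ lifts to $f$.

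\emph{Step 1: reduce to the homomorphism.} By the definition of $\G$-affine, there is a continuous homomorphism $\f:\G\to(\br,+)$ with $f(gx)=f(x)+\f(g)$ for all $g\in\G$, $x\in\LL$. Since $\LL$ is path-wise connected (Proposition \ref{Properties}(ii)) and $\G$ acts on it with $\LL/\G=K$, the function $f$ is determined up to an additive constant by $\f$ together with a choice of base point: fixing $x_0\in\LL$, any other point $x$ is $\G$-equivalent to a point in a fixed fundamental domain, and $f$ on the fundamental domain, being continuous and locally constant along the fibres modulo $\G$, carries no further data beyond that of an $n$-exact form for some $n$. So it suffices to show that (a) $\f$ factors as $\f=\psi\circ\rho_n$ where $\rho_n:\G\to\G_n$ is the canonical projection and $\psi:\G_n\to(\br,+)$, and (b) given such a $\psi$, there is a $\G_n$-affine $f_n$ on $\LL_n$ inducing the same data, which then lifts to $f$.

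\emph{Step 2: $\f$ factors through a finite level.} By Proposition \ref{Properties}(iii), $\G=\prod_{j}\bz$, a direct product of countably many copies of $\bz$ indexed by lacunas, and $\G_n$ is the free abelian group on the (finitely many) lacunas $\ell_\s$ with $|\s|\le n-1$; the projection $\rho_n$ is the coordinate projection onto those factors. A continuous homomorphism $\f:\prod_j\bz\to\br$ must vanish on all but finitely many of the factors $\bz$: indeed $\prod_j\bz$ carries the product topology, a neighbourhood basis of $0$ is given by the subgroups $\ker\rho_n$, and continuity of $\f$ at $0$ forces $\f(\ker\rho_n)$ to be a bounded subgroup of $\br$ for some $n$, hence $\f(\ker\rho_n)=\{0\}$. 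Therefore $\f$ descends to a homomorphism $\psi:\G_n\to\br$ with $\f=\psi\circ\rho_n$. (This is the one genuinely topological input, and it is the heart of the argument — everything else is bookkeeping about coverings.)

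\emph{Step 3: build $f_n$ and check it lifts to $f$.} The covering $\LL\to\LL_n$ is a quotient by $\ker\rho_n$, and $f$ is $\ker\rho_n$-invariant because $f(hx)=f(x)+\f(h)=f(x)$ for $h\in\ker\rho_n$; hence $f$ descends to a continuous function $f_n$ on $\LL_n$. For $g\in\G_n$, pick a lift $\tilde g\in\G$; then $f_n(g\,\bar x)=f(\tilde g x)=f(x)+\f(\tilde g)=f_n(\bar x)+\psi(g)$, independent of the chosen lift since $\f$ kills $\ker\rho_n$, so $f_n$ is $\G_n$-affine with associated homomorphism $\psi$. By construction $f_n$ pulls back to $f$ along $\LL\to\LL_n$, which is exactly the assertion. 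The main obstacle is Step 2 — making precise that a continuous character of the infinite product $\prod\bz$ is supported on finitely many coordinates — and this rests on correctly identifying the projective-limit topology on $\G$ (equivalently $\LL$) with the product topology, using Proposition \ref{Properties}(iii) and the fact that the $\ker\rho_n$ form a neighbourhood basis of the identity.
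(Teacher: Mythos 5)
Your proof is correct, but at the decisive step it takes a different (and arguably cleaner) route than the paper. The paper restates the lemma as ``$\f(g_\s)=0$ for $|\s|$ large enough'' and proves this by contradiction: if $\f$ were nonzero on infinitely many lacuna classes $g_{\s_n}$, the infinite products $\prod_n g_{\s_n}^{k_n}$ converge in the projective limit topology, so continuity would give $\f\big(\prod_n g_{\s_n}^{k_n}\big)=\sum_n k_n\f(g_{\s_n})$, and one chooses the integers $k_n$ to make this series diverge. You instead use that the kernels $\ker\rho_n$ of the projections $\G\to\G_n$ form a neighbourhood basis of the identity (the same topological input the paper uses, phrased via convergence of sequences), and that a subgroup of $(\br,+)$ contained in a bounded neighbourhood is trivial; this yields at once that $\f$ kills the whole open subgroup $\ker\rho_n$, not merely the generators of level $\ge n$, which is exactly what the factorization through $\G_n$ requires, and it avoids any discussion of convergent infinite products and divergent series. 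Your Step 3 then makes explicit the descent of $f$ to $\LL_n$ along $\LL\to\LL_n$, a point the paper compresses into the phrase ``this is the same as saying''; the implicit facts there (surjectivity and openness of $\LL\to\LL_n$, simple transitivity of $\ker\rho_n$ on its fibres) are left at the same level of detail as in the paper, which is acceptable. One small slip in your Step 1: a $\G$-affine function is not determined by its homomorphism $\f$ up to an additive constant --- two such functions with the same $\f$ differ by an arbitrary $\G$-invariant continuous function, i.e.\ a lift of a function on $K$ --- but this claim is never used, since Step 3 descends $f$ itself rather than reconstructing it from $\f$.
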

\begin{proof}
This is the same as saying that the homomorphism $\f$ associated with $f$ satisfies $\f(g_\s)=0$, for $|\s|$ large enough, where $g_\s$ denotes the homotopy class of the lacuna $\ell_\sigma$. Assume, by contradiction, that  $\f$ is continuous, and  non-trivial on infinitely many elements $g_n=g_{\s_n}$. Recall that a sequence $h_n$ in $\Gamma$ converges to $h$  in the projective limit topology iff, for any $k\in\bn$,  $q_k(h_n)=q_k(h)$ for sufficiently large $n$, where $q_k:\Gamma\to\Gamma_k$ is the projection; therefore, for any sequence $\set{k_n} \subset\bz$, $\lim_N\prod_{n=1}^N g_n^{k_n}=\prod_{n=1}^\infty g_n^{k_n}$ in the projective limit topology. As a consequence,
$$
\f\left(\prod_{n=1}^\infty g_n^{k_n}\right)=\sum_{n=1}^\infty k_n\f(g_n).
$$
However, one may always find a sequence of integers $\{k_n\}_{n\in\bn}$ such that the series above diverges.
\end{proof}

By the general theory of Dirichlet forms (see for example \cite{FOT}), the space of {\it locally finite energy functions} ${\widetilde\cf}_{n,{\rm loc}}$   on $\KK_n$ is  defined as those functions which coincide, on any open set of a suitable open cover of $\KK_n$, with a finite energy function in ${\widetilde\cf}_n $. Locally finite energy functions on $\KK_n$ are, in particular, continuous. Potentials of locally exact smooth forms on $K$ will be locally finite energy functions on the above considered covers.

\begin{lem}
$(i)$ A quadratic (energy) form $ \ce_\G : \ca (\Gamma, \LL)\to [0,+\infty]$ is well defined on the space $\ca (\Gamma, \LL)$ of $\G$-affine functions on the covering space $\LL$ by
\begin{equation}
\ce_\G[f]=\lim_n\left(\frac53\right)^n\sum_{e\in E_n}|\partial f(e)|^2\, ,
\end{equation}
where the quantity $\partial f(e):=f(\tilde e_+)-f(\tilde e_-)$ does not depend on the choice of the lifting $\tilde e\subset \LL$ of $e\in E_*(K)$.

\noindent $(ii)$ The energy of a $\Gamma$-affine function $f$ is finite if and only if  $f$ is the potential of a locally exact form $\o$ on $K$, and, in that case, $\ce_\G[f]=\|\o\|_\ch^2$. We shall write $df=\o$.
\end{lem}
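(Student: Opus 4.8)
The plan is to prove the two assertions separately, treating part $(i)$ (well-definedness of $\ce_\G$) first and then part $(ii)$ (the characterization of finite energy). For part $(i)$ the key point is that $\partial f(e)$ is independent of the choice of lifting $\tilde e$ of an edge $e\in E_*(K)$. Since $f$ is $\G$-affine, if $\tilde e$ and $\tilde e'$ are two liftings of the same edge, then $\tilde e' = g\tilde e$ for some $g\in\G$ (two liftings of a connected set in a regular covering differ by a deck transformation), and hence $f(\tilde e'_+)-f(\tilde e'_-) = \big(f(\tilde e_+)+\f(g)\big)-\big(f(\tilde e_-)+\f(g)\big) = f(\tilde e_+)-f(\tilde e_-)$, using that the homomorphism $\f:\G\to(\br,+)$ associated with $f$ cancels in the difference. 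Thus the sums $\sum_{e\in E_n}|\partial f(e)|^2$ are unambiguously defined, and $\ce_\G[f]$ is defined as a limit in $[0,+\infty]$; the limit exists (possibly $=+\infty$) because, by Lemma \ref{finitelyAffine}, $f$ is the lift of a $\G_n$-affine function $f_n$ on $\LL_n$ for some $n$, and upstairs on the finite graph approximations the quantities $\big(\tfrac53\big)^m\sum_{e\in E_m}|\partial f(e)|^2$ are exactly the energies $\ce[f_n\circ(\text{local chart})]$ of the local potentials of an $n$-exact form, hence are monotone nondecreasing by the harmonic structure (as in the computations in Proposition \ref{omega0norm}).

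For part $(ii)$, the forward implication: suppose $\ce_\G[f]<\infty$. By Lemma \ref{finitelyAffine} there is $n$ and a $\G_n$-affine $f_n$ on $\LL_n$ lifting $f$; pulling $f_n$ back further to $\KK_n$ gives a $\deck(\KK_n)$-affine function, and restricting to a fundamental domain yields local potentials $\{f_\s\}_{|\s|=n}$ on the closed cells $C_\s$ with $\ce_{C_\s}[f_\s]<\infty$ for each $\s$ and $\sum_{|\s|=n}\ce_{C_\s}[f_\s]=\ce_\G[f]<\infty$. These are exactly the data of Proposition \ref{omega0norm}, which produces a unique $n$-exact smooth form $\o\in\forme\subset\ch$ with $\int_\g\o=\int_\g\o_0$ for all elementary paths, and with $Q[\o]=\sum_{|\s|=n}\ce_{C_\s}[f_\s]=\ce_\G[f]$, i.e. $\|\o\|_\ch^2=\ce_\G[f]$; by construction $f$ is a potential of $\o$ in the sense of \eqref{eq:n-primitive} (the $n$-primitive formula), now read on $\LL_n$ and hence on $\LL$. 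For the reverse implication: if $f$ is a potential on $\LL$ of a locally exact form $\o$ on $K$, then $\o$ is $n$-exact for some $n$ (locally exact smooth forms are $n$-exact for suitable $n$, as recalled before Proposition \ref{wclosed}), its local potentials $\{f_\s\}$ have $\ce_{C_\s}[f_\s]<\infty$, and the $n$-th approximating sums for $\ce_\G[f]$ are, cell by cell, the energies $\ce_m[f_\s]$ which increase to $\ce_{C_\s}[f_\s]$; summing over $|\s|=n$ and passing to the limit gives $\ce_\G[f]=\sum_{|\s|=n}\ce_{C_\s}[f_\s]=Q[\o]=\|\o\|_\ch^2<\infty$. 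The notation $df:=\o$ is then consistent because $\o$ is uniquely determined by its integrals along elementary paths (Theorem \ref{cor:norm}), and those are recovered from $f$ via $\int_e\o=\partial f(\tilde e)$.

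The main obstacle I expect is bookkeeping rather than anything deep: one must be careful that the energy sums computed ``upstairs'' on $\LL$ (or $\KK_n$) genuinely decompose as a sum over cells $C_\s$ of the downstairs energies $\ce_m[f_\s]$ — this requires that for $m\ge n$ every edge of $E_m$ lies in the interior region of a unique cell $C_\s$, $|\s|=n$, where $f$ restricts (via a chart) to the finite-energy function $\tilde f_\s$, and that the lifted edges of $E_m(C_\s)$ are in bijection with $E_m(C_\s)$ downstairs because $\o$ is exact there (so no monodromy). Once this identification is in place, monotone convergence of $\ce_m[f_\s]\uparrow\ce_{C_\s}[f_\s]$ and the finite sum over $|\s|=n$ give both the existence of the limit in $(i)$ and the energy identity in $(ii)$, and the two implications of $(ii)$ are just the two directions of Proposition \ref{omega0norm} combined with the $n$-primitive formula \eqref{eq:n-primitive}.
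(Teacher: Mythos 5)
Your argument is correct and follows essentially the same route as the paper: lifting-independence of $\partial f(e)$ from $\G$-affinity, existence of the limit via Lemma \ref{finitelyAffine} together with monotonicity of the cell-wise discrete energies, and the equivalence in $(ii)$ by restricting to cells of level $n$ and glueing/using the locally exact form construction (the paper cites Lemma \ref{Q-norm} for the energy identity where you use Proposition \ref{omega0norm}; this is an immaterial difference). The one step where the paper is more careful is your claim that two liftings of an edge to $\LL$ differ by a deck transformation: since $\LL$ is a projective limit rather than a genuine covering, this is justified by taking the deck transformations $g_n\in\G_n$ relating the projected liftings in each $\LL_n$ and noting that, by uniqueness, they form a projective sequence, hence an element $g\in\G$ carrying one lifting to the other.
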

\begin{proof} $(i)$ Let $\tilde e^1, \tilde e^2$ be two liftings, $\tilde e^1_n, \tilde e^2_n$ the corresponding projections on $\LL_n$, $g_n\in \G_n$ be such that $g_n (\tilde e^1_n)= \tilde e^2_n$. The family $\set{g_n}$ is a projective sequence of deck transformations, which defines a deck transformation $g$ on $\LL$ satisfying $g(\tilde e^1)= \tilde e^2$. Since $f$ is $\Gamma$-affine its variation is the same for all liftings. Since $f$ is the lifting of a continuous function on $\LL_m$ for some $m$, the sequence above is increasing for $n>m$, and this shows the second statement.

\noindent $(ii)$
If $f$ is a $\Gamma$-affine function of finite energy then, by Lemma \ref{finitelyAffine}, $f$ is the lifting of a $\G_n$-affine function $f_n$ on $\LL_n$. Set $f_\s={f_n}|_{C_\s}$ for $|\s|=n$. Since the covering projection from $\LL_n$ to $K$ is one to one on cells of level $n$, we get the desired form by glueing the $df_\s$'s. Conversely, the existence of a potential of a locally exact form has been already shown above, and the equality $\ce_\G [f] = \|\o\|^2_2$ follows  by Lemma \ref{Q-norm}.
\end{proof}

\noindent
Notice that the quadratic form just defined on $\Gamma$-affine functions on the covering space $\LL$ reduces to the standard Dirichlet form on the gasket $K$ when evaluated on periodic functions, i.e. on (liftings of) functions on $K$. This is also the reason why the notation $df=\o$ is consistent with the usual notation for the derivation of a finite energy function on $K$.

By Proposition \ref{wclosed} $(ii)$, any locally exact topological form modulo exact topological forms may be uniquely written as a finite linear combination of the $dz_\s$, the same result holding for locally exact smooth forms modulo exact smooth forms. Therefore, denoting by $B^1C(K)$ the space of exact topological 1-forms, the following definition makes sense.
\begin{dfn}
We define $B^1(K,\br)$ as the space of exact forms on $K$, and
\begin{equation}\label{samegroup}
H^1_{dR}(K,\br)=\frac{\toplex}{B^1C(K)}=\frac{\qex}{B^1(K)}
\end{equation}
as the {\it algebraic de Rham cohomology group} for the Sierpinski gasket.
\end{dfn}

\begin{rem}
Since the group $\G=\check{H}_1(K)$ has no torsion, its homological information is fully recovered by the group $\check{H}_1(K,\br)=\G\otimes_\bz\br$.
\end{rem}

\begin{thm}[de Rham cohomology theorem]\label{deRham}
There is a one to one correspondence between locally exact topological forms  and their potentials (up to additive constants) such that $\int_\g \omega=f_\o(x_1)-f_\o(x_0)$ for any path $\g\subset K$, where $x_0,x_1$ are the end-points of a lifting of $\g$ to $\LL$. When the locally exact form is smooth the corresponding potential has finite energy. Any class in $H^1_{dR}(K,\br)$ has a smooth representative.
The pairing $\langle\g,\o\rangle=\int_\g\o$ between continuous paths and locally exact forms  gives rise to a nondegenerate pairing between elements of the group $\check{H}_1(K,\br)$  and elements of  $H^1_{dR}(K,\br)$. Such a pairing is indeed a duality.
\end{thm}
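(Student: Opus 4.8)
The plan is to assemble the statement from the structural facts already in hand --- the lifting of $n$-exact forms to the finite covers $\KK_n$ and $\LL_n$, the decompositions of Propositions \ref{omega0norm} and \ref{wclosed}, and the winding-number forms $\o^\s$ of Section \ref{sec:winding} --- the only genuinely new ingredient being the passage to the pro-cover $\LL$ and to \v{C}ech homology. \emph{Correspondence, integral formula, finite energy.} Given a locally exact topological form $\o$, fix $n$ with $\o$ $n$-exact; its local potentials $f_\s$, $|\s|=n$, glue along $\KK_n$ (by simple connectedness of $\widetilde T_n$) to a $\deck(\KK_n)$-affine function, which descends to a $\G_n$-affine $f_\o^{(n)}$ on $\LL_n$ and lifts to a $\G$-affine $f_\o$ on $\LL$. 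For a continuous path $\g\subset K$ the integral $\int_\g\o$ is the one provided by the topological-space lemma for locally exact topological forms; covering $\g$ by cells of level $\ge n$ and applying \eqref{eq:n-primitive} on each piece, together with the unique path-lifting property of $\LL$ (Proposition \ref{Properties}$(i)$), yields $\int_\g\o=f_\o(\tilde\g(1))-f_\o(\tilde\g(0))$ for any lift $\tilde\g$. Conversely, a $\G$-affine $f$ is, by Lemma \ref{finitelyAffine}, the lift of a $\G_n$-affine $f_n$ on $\LL_n$ for some $n$, and since $\LL_n\to K$ is injective on cells of level $n$, the differentials of the pieces $f_n|_{C_\s}$ glue to a locally exact form with $df=\o$. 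These constructions are mutually inverse, and the potential is unique up to an additive constant because $\LL$ is path-connected (Proposition \ref{Properties}$(ii)$), so a $\G$-affine function with vanishing differential is constant. If $\o$ is smooth it is $n$-exact smooth, so Proposition \ref{omega0norm} gives $Q[\o]=\sum_{|\s|=n}\ce_{C_\s}[f_\s]<\infty$, which the energy lemma for $\G$-affine functions identifies with $\ce_\G[f_\o]$; hence $f_\o$ has finite energy. Every class of $H^1_{dR}(K,\br)$ has a smooth representative by Proposition \ref{wclosed}$(ii)$ together with \eqref{samegroup}, since modulo exact forms any locally exact form is a finite combination $\sum_\s k_\s dz_\s$ of the smooth (harmonic) forms $dz_\s$.

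\emph{The pairing and its nondegeneracy.} The pairing $\langle\g,\o\rangle=\int_\g\o$ descends to $H^1_{dR}(K,\br)$ in the second variable because an exact topological form has a genuine potential on $K$, hence integrates to $0$ over closed paths. In the first variable, for fixed locally exact $\o$ (say $n$-exact) and a closed path $\g$, the two endpoints of a lift $\tilde\g$ to $\LL$ differ by a deck transformation $g_\g\in\G=\check{H}_1(K)$, and $\int_\g\o=\f_\o(g_\g)$ with $\f_\o:\G\to\br$ the homomorphism attached to $f_\o$, which factors through the finite quotient $\G\to\G_n$. Thus $\langle\g,\o\rangle$ depends on $\g$ only through $g_\g$, and extends uniquely and $\br$-bilinearly to a pairing $\check{H}_1(K,\br)\times H^1_{dR}(K,\br)\to\br$, $\check{H}_1(K,\br)=\G\otimes_\bz\br$. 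For nondegeneracy one works at finite level. Since $\LL_n\to K$ is a regular covering, every element of $\G_n$ equals $g_\g|_n$ for some loop $\g$, so $\int_\g\o=0$ for all closed $\g$ forces $\f_\o=0$; then $f_\o^{(n)}$ descends to a continuous potential of $\o$ on $\LL_n/\G_n\cong K$, i.e.\ $\o$ is exact and $[\o]=0$ in $H^1_{dR}$. Conversely, testing a class $\xi\in\check{H}_1(K,\br)$ against the winding-number forms $\o^\s=\sum_{\r\le\s}A_{\s\r}dz_\r$ (for which $\int_{\ell_\t}\o^\s=\d_{\s\t}$) recovers precisely the winding-number coordinates of $\xi$; the resulting map $\check{H}_1(K,\br)=\G\otimes_\bz\br\to\br^\Sigma$ is injective (an elementary consequence of $\G$ being a product of copies of $\bz$ and $\br$ being $\bz$-flat), so $\xi=0$ as soon as all these pairings vanish. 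Hence the pairing is nondegenerate on both sides.

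\emph{Duality, and the main obstacle.} In the basis $\{[dz_\s]\}$ identifying $H^1_{dR}(K,\br)\cong\bigoplus_\s\br$, the invertible lower-unitriangular change of coordinates $B=\{\int_{\ell_\t}dz_\s\}\in UT(\Sigma,\br)$ from Section \ref{sec:winding} turns $\langle\,,\,\rangle$ into the canonical pairing of $\bigoplus_\s\br$ with the subspace $\check{H}_1(K,\br)\subseteq\prod_\s\br$. Topologizing $\check{H}_1(K,\br)$ by the product topology of $\prod_\s\br$, every continuous linear functional on it is the restriction of a finite linear combination of coordinates, so nondegeneracy identifies $H^1_{dR}(K,\br)$ with the continuous dual of $\check{H}_1(K,\br)$ (and dually, $\check{H}_1(K,\br)$ sits as a dense subspace of the algebraic dual $\prod_\s\br$ of $H^1_{dR}(K,\br)$); this is the asserted de Rham duality. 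I expect the main obstacle to be making this duality precise rather than establishing the bare nondegenerate pairing: because $\G=\prod_\s\bz$ is a Baer--Specker group, $\check{H}_1(K,\br)=\G\otimes_\bz\br$ is strictly smaller than $\prod_\s\br$, so there is no naive algebraic perfect pairing and one must bring in the natural (projective-limit) topology on $\check{H}_1(K,\br)$ and the winding-number coordinates to get a clean dual-pair statement. A secondary technical point is the coherent identification of the several integrals in play --- the elementary-path integral of Section \ref{subs:Integral}, the integral of locally exact topological forms along arbitrary continuous paths, and the variation of the potential on $\LL$ --- which is exactly what underlies the formula $\int_\g\o=f_\o(\tilde\g(1))-f_\o(\tilde\g(0))$ for all continuous $\g$.
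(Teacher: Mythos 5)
Your proposal is correct, and for the correspondence between locally exact forms and $\G$-affine potentials, the integral formula, the finite-energy statement, the smooth representatives, and the construction of the pairing it follows the same route as the paper (the paper simply cites the preceding lemmas and builds the pairing through the singular classes $[\g]_n\in H_1(T_n)$ and their projective limit, which is the same datum as your deck element $g_\g$ since $\deck(\LL_n)=\G_n\cong H_1(T_n)$). Where you genuinely diverge is in the nondegeneracy and duality. The paper identifies $H^1_{dR}(K,\br)$ with $\lim_{\rightarrow}H^1_{dR}(T_n,\br)$ (direct limit topology) and $\check H_1(K,\br)$ with $\lim_{\leftarrow}H_1(T_n,\br)$ (projective limit topology), and gets the duality by invoking the classical finite-dimensional duality for each $T_n$ and passing to the limit, which yields the symmetric statement that each space is the continuous dual of the other. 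You instead argue directly: the basis $\{[dz_\s]\}$ and the unitriangular period matrix $B$ reduce the pairing to the canonical one between $\bigoplus_\s\br$ and a subspace of $\prod_\s\br$, nondegeneracy is checked by testing against the winding-number forms $\o^\s$ on one side and by descending the potential to $K$ when all periods vanish on the other, and duality comes from the fact that the continuous dual of a dense subspace of the countable product $\prod_\s\br$ is $\bigoplus_\s\br$. Your route is more explicit and has the merit of working with the space $\G\otimes_\bz\br$ that the paper's own remark takes as the definition of $\check H_1(K,\br)$: you correctly note that this is strictly smaller than $\prod_\s\br=\lim_{\leftarrow}H_1(T_n,\br)$ (the Baer--Specker phenomenon), so the clean ``continuous dual'' statement holds in one direction, while in the other direction $\G\otimes_\bz\br$ only embeds densely in the algebraic dual of $H^1_{dR}(K,\br)$. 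The paper's limit argument is shorter and gives the symmetric dual-pair statement, but only after silently replacing $\G\otimes_\bz\br$ by the full inverse limit $\lim_{\leftarrow}H_1(T_n,\br)$ --- a discrepancy between the paper's remark and its proof that your write-up makes visible and handles honestly.
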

\begin{proof}
The first and second statements follow by the Lemmas above. The third follows by eq. \eqref{samegroup}. As for the last statement, observe that, for any  continuous closed path $\g$ in $K$ and for any $n$, we may associate with $\g$ its singular homology class $[\g]_n\in H_1(T_n)$, and then the projective limit $[\g]=\displaystyle\lim_{\leftarrow}[\g]_n\in\G=\lim_{\leftarrow}H_1(T_n)$. If $\o$ is $k$-exact, and $\f_\o$ the associated homomorphism, then
$$
\f_\o([\g])=\langle\lim_{\leftarrow}[\g]_n,\o\rangle=\langle[\g]_k,\o\rangle=\int_\g\o.
$$
Since the pairing above is trivial when the form is exact, we get a pairing $\G\times H^1_{dR}(K,\br)\to\br$. Such pairing clearly extends to a pairing $\check{H}_1(K,\br)\times H^1_{dR}(K,\br)\to\br$.
\\
Now we prove the duality relation. On the one hand,
$H^1_{dR}(K,\br)$ is isomorphic to $\displaystyle\lim_{\rightarrow}H^1_{dR}(T_n,\br)$, topologized with the direct limit topology. On the other hand $\displaystyle\check{H}_1(K,\br)=\lim_{\leftarrow}H_1(T_n,\br)$, topologized with the projective limit topology. The thesis follows by the classical duality result for $T_n$.
\end{proof}

\subsection{A metric on the  \UUAC  $\widetilde L$}
In Section \ref{HodgeDeRham} we have seen that the introduction of the norm $N$ on sequences selects both the space $\ch_N$ of 1-forms and the class of paths with finite effective length in such a way that the corresponding integral exists and is finite. The notion of path with finite effective length may also be read on the  \UUAC  $\widetilde L$, where the norm on sequences induces a (possibly infinite) distance $d_N$, hence splits the space in $d_N$-components. A path has finite effective length {\it iff} its lifting to $\LL$ is contained in a single $d_N$-component. Forms with finite $\|\cdot\|_N$ norm have a finite, continuous potential on any $d_N$-component of $\LL$.

Clearly, by replacing the norm $N$ with another norm on sequences, we may enlarge the class of 1-forms which may be lifted to (exact) 1-forms on (any $d_N$-component of) $\LL$, the key property for which the construction works being the connectedness of $K$ by paths with finite  effective length. This property is not satisfied in an extreme way when $\|\cdot\|_N$ coincides with the  norm on $\ch$. Indeed, this choice will restrict the $d_N$-components in such a way that their projection to $K$ does not contain any edge, that is to say the potentials of such forms are defined in an extremely small space. Equivalently, no edge has finite effective length, cf.~Remark \ref{lastrem}.

We make use here of the norms $N$ and $N'$ on sequences $\{a \in \br^\Sigma\}$ introduced in Section \ref{HodgeDeRham}.
The metric $d_N$ considered in the following will take also the value $+\infty$, therefore it splits the space in {\it $d_N$-components}, namely maximal subsets of points  with mutually finite distance.
Denoting by $z_\s$ the $\G_n$-affine potential on $\LL_n$ of the $n$-exact form $dz_\s$, $n=|\s|+1$, and by $\f_\s :\Gamma\to\mathbb{R}$ the corresponding homomorphism, we consider the function
\begin{equation}\label{eq:qmetric}
d_N(x,y)= N'\big(z_\bullet(x)-z_\bullet(y)\big)\, .
\end{equation}

\begin{lem}
The function $d_N$ is a $\Gamma$-invariant metric which is finer than the projective limit topology. If $\g$ is a path in $K$ and $\tilde\g$ is a lifting on $\LL$, the effective length of $\g$ may be equivalently defined as $\l(\g):=d_N(\tilde\g(1),\tilde\g(0))$.
\end{lem}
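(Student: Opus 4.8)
The plan is to prove the three claims in order, reducing each to a statement about the single family $\{z_\s\}_{\s\in\Sigma}$ of $\G$-affine potentials on $\LL$. The formal metric axioms are immediate: since $N'$ is a norm, $d_N$ is symmetric, and the triangle inequality is the subadditivity of $N'$ applied to $z_\bullet(x)-z_\bullet(z)=(z_\bullet(x)-z_\bullet(y))+(z_\bullet(y)-z_\bullet(z))$. For $\Gamma$-invariance, each $z_\s$ is $\G$-affine, $z_\s(gx)=z_\s(x)+\f_\s(g)$, so $z_\s(gx)-z_\s(gy)=z_\s(x)-z_\s(y)$ and hence $d_N(gx,gy)=d_N(x,y)$; this also makes $\l(\g):=d_N(\tilde\g(1),\tilde\g(0))$ independent of the chosen lift. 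What remains is (a) $d_N(x,y)=0\Rightarrow x=y$, and (b) the $d_N$-topology contains the projective-limit topology.

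\emph{Reduction.} Each $z_\s$ is projective-limit continuous, being the composition of the continuous projection $q_{|\s|+1}\colon\LL\to\LL_{|\s|+1}$ with the (continuous, $\Gamma_{|\s|+1}$-affine) potential of $dz_\s$ on $\LL_{|\s|+1}$. Since $N'(a)\ge(3/5)^{|\s|}|a_\s|$ for every $\s$, the condition $d_N(x_k,x)\to0$ forces $z_\s(x_k)\to z_\s(x)$ for all $\s$. Thus (a) follows once the family $\{z_\s\}$ separates the points of $\LL$, and (b) follows once, for each $n$, the convergence $z_\s(x_k)\to z_\s(x)$ for all $\s$ implies $q_n(x_k)\to q_n(x)$ in $\LL_n$. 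Both reduce to the single assertion that \emph{for each $n$ the functions $z_\s$, $|\s|<n$, separate the points of $\LL_n$}: the finite block $(B_{\s\t})_{|\s|,|\t|<n}$ of $B_{\s\t}=\int_{\ell_\t}dz_\s=\f_\s(g_\t)$ is lower-unitriangular, hence invertible, so the $\f_\s$, $|\s|<n$, form a basis of $\operatorname{Hom}(\Gamma_n,\br)$ and both separate the fibres of $\LL_n\to K$ and detect the $\Gamma_n$-sheet of a point; consequently a $d_N$-convergent sequence lies, after a fixed deck transformation, eventually in one compact fundamental domain $D_n$ for the $\Gamma_n$-action on $\LL_n$, and a continuous injection of the compact $D_n$ into $\br^{\{|\s|<n\}}$ is automatically an embedding, so point-separation on $\LL_n$ gives both (a) (letting $n\to\infty$) and (b).

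\emph{The crux.} It remains to show that two points of $\LL_n$ projecting to distinct points $a\ne b$ of $K$ are separated by some $z_\s$ (the case of equal projection being handled by the fibre computation above). For this I would use that $z_\s$ is constant on every cell of level $|\s|$ other than $C_\s$ — this is exactly what the minimization defining $dz_\s$ forces, since $dz_\s$ vanishes there — together with the fact that, for $a\notin V_*$, the point $a$ lies in a decreasing chain of cells $C_{\s^{(m)}}\supset C_{\s^{(m+1)}}\supset\cdots$ eventually contained in any neighbourhood of $a$, whereas $b\notin C_{\s^{(m)}}$ for $m$ large. Feeding in the explicit piecewise‑harmonic description of the $z_\s$ (local potentials equal to the harmonic functions with boundary values $\tfrac16,0,-\tfrac16$, as in Proposition~\ref{wclosed}) together with the contraction $\osc_{C_1}(f)\le\tfrac35\osc_C(f)$ for harmonic $f$, one sees that the values $\{z_{\s^{(m)}}\}$ pin down the location of $a$; the points of $V_*$, the only non-generic ones, are reached by a limiting argument. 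This nondegeneracy of the harmonic pieces of the $z_\s$ on the finite coverings is the genuine obstacle; everything else is bookkeeping. (Alternatively, (a) follows from (b) for free: the projective-limit topology on $\LL$ is that of a countable projective limit of metrizable spaces, hence Hausdorff; a topology refining a Hausdorff one is Hausdorff, and a pseudometric whose topology is Hausdorff is a metric.)

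\emph{Effective length.} For a path $\g\subset K$ with lift $\tilde\g$ to $\LL$, the $(|\s|+1)$-exact form $dz_\s$ has $z_\s$ as a potential on $\LL$, so by \eqref{eq:n-primitive} (cf.\ Theorem~\ref{deRham}) one has $\int_\g dz_\s=z_\s(\tilde\g(1))-z_\s(\tilde\g(0))$. Hence, by \eqref{ef_length} and \eqref{eq:qmetric}, $\l(\g)=N'\big((\int_\g dz_\s)_\s\big)=N'\big(z_\bullet(\tilde\g(1))-z_\bullet(\tilde\g(0))\big)=d_N(\tilde\g(1),\tilde\g(0))$, which by the $\Gamma$-invariance established above is independent of the lift.
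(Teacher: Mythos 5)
Your handling of the routine parts agrees with the paper: the pseudometric axioms and $\G$-invariance come from the norm $N'$ and the affinity of the $z_\s$'s, the identity $\int_\g dz_\s=z_\s(\tilde\g(1))-z_\s(\tilde\g(0))$ gives the effective-length statement, and the observation that positive definiteness is free once the $d_N$-topology refines the (Hausdorff) projective limit topology is exactly the trick the paper uses. The genuine gap is in the topological comparison itself, which is the real content of the lemma and which the paper does not prove inside the lemma but delegates to Proposition \ref{lem:sametop} of the Appendix (the weak topology generated by the $z_\s$'s coincides with the projective limit topology). Your route to it rests on the assertion that \emph{for each $n$ the functions $z_\s$, $|\s|<n$, separate the points of $\LL_n$}, and this assertion is false. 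Already for $n=1$ the only available function is $z_\emptyset$: a lifted cell of level $1$ in $\LL_1$ is homeomorphic to $K$, hence contains circles (the lacunas of level $\geq 1$, which are not unwound in $\LL_1$), and no single continuous real-valued function is injective on a circle; concretely, the restriction of $z_\emptyset$ to such a cell is a nonconstant harmonic function whose level sets are far from singletons. For general $n$ the situation is no better in principle: on a cell of level $\geq n$ all the $z_\s$ with $|\s|<n$ restrict to harmonic functions, so they factor through the two-dimensional space of harmonic functions modulo constants, and injectivity would require a nondegeneracy of these specific harmonic ``coordinates'' that your crux paragraph only gestures at (``one sees that the values pin down the location of $a$'', plus an unspecified limiting argument at vertices). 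The subsequent step ``a $d_N$-convergent sequence lies, after a fixed deck transformation, eventually in one compact fundamental domain'' is also asserted rather than proved.

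The paper's Appendix avoids any separation claim: given a basic projective-limit neighbourhood (an $n$-cell interior, or a butterfly neighbourhood at a vertex, after increasing $n$ if necessary), it constructs an explicit set $\O$, open in the weak topology of the $z_\t$'s with $|\t|<n$, contained in that neighbourhood. This uses a normalization of the potentials on a chosen fundamental domain, Lemma \ref{lem:integer} (every nontrivial $g\in\G_n$ is detected by some $\f_\t$, $|\t|<n$, with a nonzero \emph{integer} value) to push all other sheets outside the prescribed ranges, and Lemma \ref{lem:axes} (the level set of $z_\s$ through its value at the outer vertex consists of vertices only) to handle the delicate vertex cases. Some quantitative control of level sets of this kind is unavoidable; as written, your reduction ``point-separation on $\LL_n$ gives both (a) and (b)'' fails at $n=1$, so the proposal does not yet prove that $d_N$ is finer than the projective limit topology (nor, by your own reduction, that it is definite). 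The fibre argument via the unitriangular matrix $B$ and the effective-length identity are correct.
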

\begin{proof}
The value $d_N(x,y)$ is obtained by composing the norm  $N'$ on sequences indexed by $\Sigma$ with the (semi-definite) distances $d_\s(x,y)=|z_\s(y)-z_\s(x)|$. Therefore, on the one hand $d_N$ is a (possibly semi-definite) metric on $\LL$, on the other hand  the topology induced by $d_N$ is stronger than the weak topology induced by the $z_\s$'s, which is the projective limit topology, by Lemma \ref{lem:sametop} in the Appendix. Since the projective limit topology is Hausdorff, this shows at once that $d_N$ is positive definite and that is  finer than the projective limit topology.
Finally, we have
$d_N(gx,gy)= N'\big(z_\bullet(gy)-z_\bullet(gx) \big)=
N'\big(z_\bullet(y)-z_\bullet(x)\big)=d_N(x,y)$ for all $g\in\Gamma$.
The last statement follows by the given definitions.
\end{proof}

\begin{lem}
Let $x$ be a point in $\LL$, $g\in \G$. Then, the quantity $\ell_N(g):=d_N(x,g x)$ does not depend on $x$, and $\ell_N(g)=0$ {\it iff} $g$ is the identity. The set $\G_N=\{g\in \Gamma:d_N(x,g x)<\infty\}$ does not depend on $x$, and is a subgroup of $\G$. The function $\ell_N(g)$ is a  length function on $\G_N$.
\end{lem}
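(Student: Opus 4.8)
The plan is to verify each assertion in turn, using the $\Gamma$-invariance of $d_N$ established in the previous lemma as the workhorse.

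\emph{Independence of the base point.} First I would fix $g\in\Gamma$ and two points $x,y\in\LL$. Using $\Gamma$-invariance, $d_N(y,gy)=d_N(g^{-1}y,y)$, and I want to compare this with $d_N(x,gx)$. The cleanest route is to combine $\Gamma$-invariance with the triangle inequality: $d_N(x,gx)\le d_N(x,y)+d_N(y,gy)+d_N(gy,gx)=d_N(x,y)+d_N(y,gy)+d_N(y,x)$, so $d_N(x,gx)-d_N(y,gy)\le 2d_N(x,y)$. This only gives a bound, not equality, so instead I would argue directly: the map $z\mapsto gz$ is a bijection of $\LL$, and $d_N(x,gx)=N'\big(z_\bullet(gx)-z_\bullet(x)\big)$. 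Writing $z_\s(gx)=z_\s(x)+\f_\s(g)$ for the homomorphism $\f_\s$ associated with the $\G$-affine potential $z_\s$ (as introduced just before the lemma), we get $z_\bullet(gx)-z_\bullet(x)=\big(\f_\s(g)\big)_\s$, which manifestly does \emph{not} depend on $x$. Hence $\ell_N(g)=N'\big((\f_\s(g))_\s\big)$ is a well-defined function of $g$ alone. This same computation simultaneously handles the next point.

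\emph{$\ell_N(g)=0$ iff $g=e$, and $\G_N$ is a subgroup.} Since $\ell_N(g)=N'\big((\f_\s(g))_\s\big)$ and $N'$ is a norm (as recorded in Section~\ref{HodgeDeRham}), $\ell_N(g)=0$ forces $\f_\s(g)=0$ for all $\s$, i.e.\ $z_\s(gx)=z_\s(x)$ for all $\s$ and all $x$. By Lemma~\ref{lem:sametop} the functions $z_\s$ separate points and induce the projective limit topology, which is Hausdorff; hence $gx=x$ for all $x$, so $g$ is the identity. For the subgroup property I would use that $\ell_N(g)=d_N(x,gx)$: clearly $e\in\G_N$; if $g\in\G_N$ then $d_N(x,g^{-1}x)=d_N(gx,x)=d_N(x,gx)<\infty$ by $\Gamma$-invariance, so $g^{-1}\in\G_N$; and if $g,h\in\G_N$ then $d_N(x,ghx)\le d_N(x,gx)+d_N(gx,ghx)=d_N(x,gx)+d_N(x,hx)<\infty$, again by $\Gamma$-invariance, so $gh\in\G_N$. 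Thus $\G_N\le\Gamma$.

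\emph{$\ell_N$ is a length function on $\G_N$.} Here I would simply check the axioms: $\ell_N(e)=0$ and $\ell_N(g)>0$ for $g\ne e$ (just shown); symmetry $\ell_N(g^{-1})=d_N(x,g^{-1}x)=d_N(gx,x)=d_N(x,gx)=\ell_N(g)$ by $\Gamma$-invariance; and subadditivity $\ell_N(gh)=d_N(x,ghx)\le d_N(x,gx)+d_N(gx,ghx)=\ell_N(g)+\ell_N(h)$ exactly as in the subgroup argument. One may also note conjugation-invariance is \emph{not} required since $\Gamma$ is abelian, but it holds trivially.

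\emph{Main obstacle.} None of the steps is deep; the only point requiring care is the base-point independence, where the temptation is to argue by triangle inequalities (which only yield estimates). The clean argument is to pass through the explicit formula $z_\s(gx)=z_\s(x)+\f_\s(g)$ coming from $\G$-affineness of $z_\s$, reducing everything to the normed-space identity $\ell_N(g)=N'\big((\f_\s(g))_\s\big)$; once this is in hand, every remaining claim is a one-line consequence of $N'$ being a norm and of $\Gamma$-invariance of $d_N$. A minor subtlety worth a sentence is that a priori some $\f_\s(g)$ could be nonzero for infinitely many $\s$, so that $\ell_N(g)$ may be $+\infty$; this is precisely why $\G_N$ is introduced as a proper subset of $\Gamma$, and the length-function axioms are only asserted on $\G_N$, where finiteness is guaranteed by definition.
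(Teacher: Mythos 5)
Your proof is correct and follows essentially the same route as the paper: base-point independence via the $\Gamma$-affineness identity $z_\s(gx)-z_\s(x)=\f_\s(g)$, giving $\ell_N(g)=N'\big(\f_\bullet(g)\big)$, positivity from the fact that the $z_\s$'s separate points (which is exactly how positive-definiteness of $d_N$ was obtained in the preceding lemma, so you could simply cite it), and routine triangle-inequality/$\Gamma$-invariance checks for the subgroup and length-function properties, which the paper dismisses as obvious. Note also that you correctly use $N'$ from the definition of $d_N$, whereas the paper's displayed formula writes $N$ there, apparently a typo.
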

\begin{proof}
For any $\s\in\Sigma$, let $\f_\s\in {\rm hom} (\Gamma ,\mathbb{R})$ be the homomorphism associated to the $\Gamma$-affine function $z_\s$ on $\widetilde L$ in such a way that $z_\s (gx)-z_\s (x)=\f_\s (g)$ for all $g\in\Gamma$. Let us denote by $\f_\bullet (g)\in \mathbb{R}^\Sigma$ the sequence $\s\mapsto\f_\s (g)$. Equation \eqref{eq:qmetric} then shows that
$$
d_N(x,g x)=N\big(z_\bullet(gx)-z_\bullet(x)\big)=N\big(\f_\bullet(g)\big),
$$
and the first statement follows. Since $d_N$ is a pseudo-metric, $\ell_N(g)=0$ means $gx=x$ for any $x$, namely $g=e$. The last two properties are obvious.
\end{proof}

\begin{lem} \label{ConnELungh}
	The projection map $p$ restricted to a $d_N$-component is surjective $\iff$ for all $x,y\in K$ there is a continuous path $\g$ in $K$ between them which has finite effective length.
\end{lem}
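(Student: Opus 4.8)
The plan is to move back and forth between paths in $K$ and their liftings to $\LL$, exploiting the unique path-lifting property (Proposition \ref{Properties}$(i)$) and the path-connectedness of $\LL$ (Proposition \ref{Properties}$(ii)$), together with the fact recorded just above that the effective length $\l(\g)=d_N(\tilde\g(1),\tilde\g(0))$ does not depend on the chosen lifting $\tilde\g$, since $d_N$ is $\Gamma$-invariant. Recall also that, because $d_N$ takes values in $[0,+\infty]$ and satisfies the triangle inequality, ``$d_N(\cdot,\cdot)<\infty$'' is an equivalence relation whose classes are precisely the $d_N$-components.

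For the forward implication I would argue as follows. Suppose $p$ is surjective on a $d_N$-component $\cam$, and let $x,y\in K$. Choose preimages $\tilde x,\tilde y\in\cam$ of $x$ and $y$. Since $\LL$ is path-wise connected, pick a continuous path $\tilde\g:[0,1]\to\LL$ from $\tilde x$ to $\tilde y$, and set $\g:=p\circ\tilde\g$. Then $\g$ is a continuous path in $K$ from $x$ to $y$, and $\tilde\g$ is a lifting of $\g$; by uniqueness of path-lifting it is precisely the lifting of $\g$ starting at $\tilde x$. Hence $\l(\g)=d_N(\tilde\g(1),\tilde\g(0))=d_N(\tilde y,\tilde x)<\infty$, the last inequality holding because $\tilde x$ and $\tilde y$ lie in the same $d_N$-component.

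For the converse I would fix an arbitrary $d_N$-component $\cam$ and a point $\tilde x_0\in\cam$, put $x_0:=p(\tilde x_0)$, and take an arbitrary $z\in K$. By hypothesis there is a continuous path $\g$ in $K$ from $x_0$ to $z$ with $\l(\g)<\infty$. Let $\tilde\g$ be the lifting of $\g$ to $\LL$ with $\tilde\g(0)=\tilde x_0$, and set $\tilde z:=\tilde\g(1)$. Then $p(\tilde z)=z$ and $d_N(\tilde x_0,\tilde z)=\l(\g)<\infty$, so $\tilde z$ lies in the same $d_N$-component as $\tilde x_0$, i.e. $\tilde z\in\cam$. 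Thus $p|_\cam$ is onto; since $\cam$ was arbitrary, this in fact shows that under the stated hypothesis $p$ is surjective on \emph{every} $d_N$-component.

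I do not expect a genuine obstacle here: the proof is a short transfer argument. The only point needing a little care is matching the abstractly chosen connecting path in $\LL$ (in the forward direction) with the lift that enters the definition of effective length, and this is exactly what unique path-lifting provides.
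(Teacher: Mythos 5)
Your proof is correct and follows essentially the same route as the paper's: one direction projects a path in $\LL$ between two preimages lying in a common $d_N$-component (using path-connectedness of $\LL$ and the lifted characterization $\l(\g)=d_N(\tilde\g(1),\tilde\g(0))$), and the other lifts a given finite-effective-length path from a fixed basepoint and observes that its endpoint stays in the same $d_N$-component. The only cosmetic difference is that you make the quantification over components explicit (showing surjectivity on every $d_N$-component), which the paper leaves implicit.
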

\begin{proof}
	$(\Longleftarrow)$ Let us fix $\wt{x}_0\in \LL$, and let $x_0:= p(\wt{x}_0)$. Then, for any $x\in K$ there is a continuous path $\g$ in $K$, starting in $x_0$ and ending in $x$,  which has finite effective length. Denote by $\wt{\g}$ its unique lifting to a path in $\LL$ starting at  $\wt{x}_0\in\LL$. Then $\pi(\wt{\g}(1))=x$, and $\wt{\g}(1)$ belongs to the same $d_N$-component of $\wt{x}_0$.
	
	\noindent $(\Longrightarrow)$ Let $x,y\in K$. By assumption, there are $\wt{x},\wt{y}\in\LL$  such that $d_N(\wt{x},\wt{y})<\infty$ and $p(\wt{x})=x$, $p(\wt{y})=y$. Because of Proposition \ref{Properties} $(ii)$, there is a continuous path $\wt{\g}$ in $\LL$ between $\wt{x}$ and $\wt{y}$. Set $\g:= p\circ \wt{\g}$, which automatically has finite effective length.
\end{proof}

\begin{lem}
Elementary paths have finite effective length. Any $d_N$-component of $\LL$ projects surjectively on $K$.
\end{lem}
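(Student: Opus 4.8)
The first assertion follows at once from Lemma~\ref{edgeshavefinitelength} together with the subadditivity of effective length: if $\g=e_1\cup\dots\cup e_k$ is an elementary path, then $\int_\g dz_\bullet=\sum_{j=1}^k\int_{e_j}dz_\bullet$, and since $N'$ is a norm the triangle inequality gives $\l(\g)=N'(\int_\g dz_\bullet)\le\sum_{j=1}^k N'(\int_{e_j}dz_\bullet)=\sum_{j=1}^k\l(e_j)<\infty$. There is nothing subtle here.

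For the second assertion, by Lemma~\ref{ConnELungh} it suffices to join any two points $x,y\in K$ by a continuous path of finite effective length. Since each graph $(V_m,E_m)$ is connected, any two vertices in $V_*$ are joined by an elementary path, which has finite effective length by the first part; so it is enough to show that every $x\in K$ can be joined to a vertex of $V_*$ by a path of finite effective length, and then compose $x\rightsquigarrow v\rightsquigarrow w\rightsquigarrow y$. If $x\in V_*$ there is nothing to prove, so assume $x\notin V_*$; then $x$ lies in the interior of a unique decreasing sequence of cells $C_{\s^{(0)}}\supset C_{\s^{(1)}}\supset\cdots$ with $|\s^{(n)}|=n$ and $\s^{(n+1)}=\s^{(n)}i_n$ for suitable letters $i_n\in\{0,1,2\}$.

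The plan is to connect $x$ to $v_0\in V_0$ by an infinite concatenation. Pick $v_n\in\bordo C_{\s^{(n)}}$; both $v_n$ and $v_{n+1}$ then lie among the six vertices $w_{\s^{(n)}}(V_1)=V_{n+1}\cap C_{\s^{(n)}}$, which span within $C_{\s^{(n)}}$ a scaled copy of the graph $(V_1,E_1)$, of combinatorial diameter at most $2$. Hence there is an elementary path $\g_n\subset C_{\s^{(n)}}$ from $v_n$ to $v_{n+1}$ using at most two edges of $E_{n+1}$, so by Lemma~\ref{edgeshavefinitelength}, $\l(\g_n)\le\frac{5+2n}{3}\left(\frac35\right)^n$. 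Concatenating $\g_0*\g_1*\g_2*\cdots$ and declaring the terminal point to be $x$ produces a continuous path $\G$ from $v_0$ to $x$: continuity at the terminal point holds because the portion of $\G$ beyond $\g_n$ lies in $C_{\s^{(n+1)}}\ni x$, whose diameter tends to $0$. Since $dz_\s$ is locally exact, its integral along a continuous curve equals the variation of a continuous local potential, so $\int_\G dz_\s$ is well defined and equals $\sum_n\int_{\g_n}dz_\s$, the series being absolutely convergent as $|\int_{\g_n}dz_\s|\le\l(\g_n)$ decays geometrically. Finally, since all terms are nonnegative, Tonelli's theorem gives $\l(\G)=N'\big(\sum_n\int_{\g_n}dz_\bullet\big)\le\sum_n N'\big(\int_{\g_n}dz_\bullet\big)=\sum_n\l(\g_n)\le\sum_n\frac{5+2n}{3}\left(\frac35\right)^n<\infty$. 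This establishes the required connectedness, and the statement about $d_N$-components follows from Lemma~\ref{ConnELungh}.

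The routine estimates (the geometric-times-polynomial sum, the diameter-$2$ fact about $(V_1,E_1)$) are immediate. The points that deserve care, and which I would verify in detail, are: (i) that the infinite concatenation $\G$ really is a continuous path terminating at $x$ and that $\int_\G dz_\s=\sum_n\int_{\g_n}dz_\s$ — both of which hinge on the local exactness of $dz_\s$, i.e. on its integral being computed from a continuous local potential (equivalently, from the continuous $\G_{|\s|+1}$-affine potential $z_\s$ on $\LL_{|\s|+1}$, evaluated along the lift of $\G$); and (ii) the countable triangle inequality $N'(\sum_n a_n)\le\sum_n N'(a_n)$ for the sequences $a_n=\int_{\g_n}dz_\bullet$, which requires first checking that the coordinatewise sums converge (guaranteed by the geometric decay above) before Tonelli applies.
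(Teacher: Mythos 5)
Your proof is correct and follows essentially the same route as the paper's: the edge estimate of Lemma \ref{edgeshavefinitelength} plus subadditivity for the first claim, and Lemma \ref{ConnELungh} together with a path reaching an arbitrary point of $K$ via geometrically shrinking edges (a bounded number per level) for the second. The only difference is cosmetic — the paper reuses the path built in Lemma \ref{U1constr} (at most one edge per level) where you construct one directly from the nested cells (at most two per level) — and your explicit treatment of the infinite concatenation (continuity at the endpoint, additivity of $\int dz_\s$, countable subadditivity of $N'$) spells out what the paper leaves implicit; note only that $|\int_{\g_n}dz_\s|\le(5/3)^{|\s|}\l(\g_n)$ rather than $\le\l(\g_n)$, which does not affect the convergence.
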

\begin{proof}
By Lemma \ref{edgeshavefinitelength}, edges have finite effective length.
Now we observe that the effective length is sub-additive. Indeed, if $\g_1,\g_2$ are consecutive paths, $\wt{\g}_1$ is a lifting of $\g_1$ starting from some point $\wt{x}_0\in\LL$, and $\wt{\g}_2$ is a lifting of $\g_2$ starting from   $x:=\wt{\g}_1(1)$, then
$$
\l(\g_1\cdot\g_2)=d_N(\wt{\g}_1(0),\wt{\g}_2(1))\leq d_N(\wt{\g}_1(0),x)+d_N(x,\wt{\g}_2(1)) = \l(\g_1)+\l(\g_2).
$$
The first statement follows.
As for the second, the thesis is equivalent to the connectedness of $K$ by means of paths of finite effective length, as shown in Lemma \ref{ConnELungh}. We have shown in Lemma \ref{U1constr} that a vertex $v_0\in V_0$ can be connected to any vertex of level $p$ by an elementary path consisting of at most 1 edge of level $j$ for any $j\leq p$, thus proving that $v_0$ can indeed  be connected to any point $x$ in $K$ by a path consisting of (possibly infinitely many) edges, at most 1 of them for any level. The thesis follows by the estimate in Lemma \ref{edgeshavefinitelength} and sub-additivity.
\end{proof}

%%%%%%%%%
\subsection{Potentials of smooth 1-forms}
%%%%%%%%%
Indeed the results are formulated for the elements of the closure of $\forme$ in $\ch$ w.r.t. the norm $\|\cdot\|_N$, namely for elements of $\ch_N$.

\begin{lem}
Let $\o=dU_E+\sum_\s k_\s dz_\s\in\ch_N$.
For any $d_N$-component  $\LL_0\subset\LL$, we may  associate to $\o$ a function $U=U_E+U_H$ , where $U_E$ was described in Proposition \ref{cor:decompBis} and,  $\forall x_0\in\LL_0$, $U_H$ may be written as
$$
U_H(x)=\sum_\s k_\s  (z_\s(x)-z_\s(x_0)).
$$
The series defining $U_H$ converges uniformly on compact sets, and
 $U_H$ is a $d_N$-continuous $\Gamma_N$-affine function on $\LL_0$.
In particular,  $U$   is a potential for $\omega$, namely, for any continuous path $\g$ in $K$, $\l(\g)<\infty$, and any lifting $\wt\g$ of $\g$ to $\LL$, it holds
$\int_\gamma \omega=U(\tilde\g(1))-U(\tilde\g(0))<\infty$.
\end{lem}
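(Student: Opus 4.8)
The plan is to fix the $d_N$-component $\LL_0$ and a base point $x_0\in\LL_0$, put $U_H(x):=\sum_\s k_\s\big(z_\s(x)-z_\s(x_0)\big)$, and establish in order: pointwise absolute convergence, the regularity ($d_N$-continuity, uniform convergence on compacts), the $\Gamma_N$-affine identity, and finally the potential formula. The single elementary tool used throughout is the H\"older-type duality between $N$ and $N'$, namely $\sum_\s\abs{a_\s b_\s}\le N(a_\bullet)N'(b_\bullet)$, together with the fact recorded in Theorem \ref{12deRham} that $\o\in\ch_N$ means precisely $\ce[U_E]<\infty$ and $N(k_\bullet)<\infty$, with $U_E$ the periodic finite-energy function of Proposition \ref{cor:decompBis}.

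First I would observe that for $x\in\LL_0$ one has, by definition of a $d_N$-component, $N'\big(z_\bullet(x)-z_\bullet(x_0)\big)=d_N(x,x_0)<\infty$; the duality inequality then gives $\sum_\s\abs{k_\s}\,\abs{z_\s(x)-z_\s(x_0)}\le N(k_\bullet)\,d_N(x,x_0)<\infty$, so the series for $U_H$ converges absolutely on all of $\LL_0$, and changing the base point alters $U_H$ only by a finite additive constant. The same inequality applied to $x,y\in\LL_0$ yields $\abs{U_H(x)-U_H(y)}\le N(k_\bullet)\,d_N(x,y)$, so $U_H$ — hence $U=U_E+U_H$, $U_E$ being continuous on $K$ — is $d_N$-Lipschitz, in particular $d_N$-continuous. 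For uniform convergence on a $d_N$-compact set $\ck\subset\LL_0$: the remainder $\sum_{\abs{\s}>m}k_\s(z_\s(x)-z_\s(x_0))$ is bounded in modulus by $N(k_\bullet)\,g_m(x)$ with $g_m(x):=\sum_{n>m}(3/5)^n\max_{\abs{\s}=n}\abs{z_\s(x)-z_\s(x_0)}$; each $g_m$ is $1$-Lipschitz for $d_N$, and $g_m(x)\downarrow0$ pointwise on $\LL_0$ since $\sum_{n\ge0}(3/5)^n\max_{\abs{\s}=n}\abs{z_\s(x)-z_\s(x_0)}=d_N(x,x_0)<\infty$; Dini's theorem then gives $g_m\to0$ uniformly on $\ck$, whence the claimed uniform convergence.

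Next, for the $\Gamma_N$-affine property I would bring in the homomorphisms $\f_\s\in\mathrm{hom}(\Gamma,\br)$ determined by $z_\s(gx)-z_\s(x)=\f_\s(g)$. For $g\in\Gamma_N$ one has $N'\big(\f_\bullet(g)\big)=d_N(x_0,gx_0)=\ell_N(g)<\infty$, so $\psi(g):=\sum_\s k_\s\f_\s(g)$ converges absolutely with $\abs{\psi(g)}\le N(k_\bullet)\,\ell_N(g)$; summing the cocycle relation $\f_\s(gh)=\f_\s(g)+\f_\s(h)$ termwise shows $\psi:\Gamma_N\to\br$ is a homomorphism, $\ell_N$-continuous by the last bound. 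Writing $z_\s(gx)-z_\s(x_0)=\f_\s(g)+\big(z_\s(x)-z_\s(x_0)\big)$ and summing over $\s$ (legitimate by absolute convergence, and noting that $g\in\Gamma_N$ forces $gx\in\LL_0$) gives $U_H(gx)=U_H(x)+\psi(g)$ for all $x\in\LL_0$, i.e.\ $U$ is $\Gamma_N$-affine on $\LL_0$.

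Finally, for the potential formula, let $\g$ be a continuous path in $K$ with $\l(\g)<\infty$ and $\wt\g$ a lifting; by the characterization of finite effective length the lift stays in a single $d_N$-component (cf.\ Lemma \ref{ConnELungh} and the surrounding discussion), which I may take to be $\LL_0$, so $U$ is defined at the endpoints. By Theorem \ref{12deRham}, $\int_\g\o=\int_\g dU_E+\sum_\s k_\s\int_\g dz_\s$; since $dU_E$ and each $dz_\s$ are locally exact topological forms with potentials $U_E$ on $K$ and $z_\s$ on $\LL$, Theorem \ref{deRham} evaluates $\int_\g dU_E=U_E(\wt\g(1))-U_E(\wt\g(0))$ and $\int_\g dz_\s=z_\s(\wt\g(1))-z_\s(\wt\g(0))$, and adding these — the series $\sum_\s k_\s\big(z_\s(\wt\g(1))-z_\s(\wt\g(0))\big)=U_H(\wt\g(1))-U_H(\wt\g(0))$ converging since $\l(\g)=d_N(\wt\g(1),\wt\g(0))<\infty$ — produces $\int_\g\o=U(\wt\g(1))-U(\wt\g(0))<\infty$. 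I expect the most delicate points to be the two just flagged: the Dini argument, which is what upgrades termwise convergence of a series of $d_N$-unbounded functions to uniform convergence on compacts and hence to regularity of $U_H$, and the verification (via the earlier theory of finite effective length) that the lift $\wt\g$ does not leave $\LL_0$, so that the potential formula is even meaningful; checking that the various extensions of the integral invoked here are mutually consistent is routine, since all of them reduce on elementary paths to $\sum_e\o(e)$.
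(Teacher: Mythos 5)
Your proposal is correct and follows essentially the same route as the paper's proof: the $N$--$N'$ duality inequality giving the Lipschitz bound $|U_H(x)-U_H(y)|\le N(k_\bullet)\,d_N(x,y)$, the homomorphisms $\f_\s$ yielding $\Gamma_N$-affinity, and the extension formula of Theorem \ref{12deRham} evaluated via the potentials $U_E$ and $z_\s$ for the final identity. Your Dini argument for uniform convergence of the tail sums on ($d_N$-)compact sets is a welcome extra detail, since the paper's proof asserts only the Lipschitz estimate and leaves that part of the statement implicit.
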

\begin{proof}
Given two points $x_1,x_2\in\LL_0$, we have
\begin{align*}
|U_H(x_2)-U_H(x_1)|
&=\bigg| \sum_\s k_\s(z_\s(x_2)-z_\s(x_1)) \bigg|
\leq  N'(z_\bullet(x_2)-z_\bullet(x_1))\,N(k_\bullet)
\leq d_N(x_1,x_2)\|\o\|_N.
\end{align*}
As a consequence, $U_H$ is Lipschitz $d_N$-continuous. In particular, if $\ell(g)<\infty$, then $x$ and $gx$ belong to the same $d_N$-component, and $U_H(gx)-U_H(x)=\sum_\s k_\s\f_\s(g)$, namely $U_H$ is $\G_N$-affine.
Since $U_E$ is continuous on $K$, it lifts to a $\G$-invariant function on $\LL$, continuous in the projective limit topology, hence also in the (stronger) $d_N$-topology. The last statement easily follows.
\end{proof}

\begin{thm}\label{primitiveoncomponent}
$(i)$  Any form in $\ch_N$ has a $\G_N$-affine potential  on any $d_N$-component of  $\LL$;
\item{$(ii)$}  the integral of a form in $\ch_N$   along a path $\g$  with finite effective length coincides with the variation of the potential at the end points of a lifting of $\g$;
\item{$(iii)$}  such integral gives  a nondegenerate pairing between $\G_N$ and $\ch_N/B^1(K)$. Indeed, the space $\ch_N/B^1(K)$ is the Banach space dual of  $\G_N\otimes_\bz\br$.
\end{thm}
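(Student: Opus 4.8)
The plan is to dispatch $(i)$ and $(ii)$ at once by quoting the Lemma immediately preceding the statement: for $\o=dU_E+\sum_\s k_\s dz_\s\in\ch_N$ it already exhibits, on each $d_N$-component of $\LL$, the $\G_N$-affine potential $U=U_E+U_H$ together with the identity $\int_\g\o=U(\wt\g(1))-U(\wt\g(0))$ for every path $\g$ of finite effective length; so the substance is all in $(iii)$.

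For $(iii)$ I would first set up the pairing and two model spaces. To $\o\in\ch_N$ attach the homomorphism $\f_\o\colon\G_N\to\br$, $\f_\o(g)=\sum_\s k_\s\f_\s(g)$, associated with its $\G_N$-affine potential, $\f_\s$ being the homomorphism of $z_\s$; by $(ii)$ this equals $\int_\g\o$ for any closed path $\g$ of finite effective length in the class $g$, so the pairing is the expected integration pairing. One has $|\f_\o(g)|\le N(k_\bullet)\,N'(\f_\bullet(g))=N(k_\bullet)\,\ell_N(g)$ and $\f_{dU_E}\equiv 0$, so $\langle[\o],g\rangle:=\f_\o(g)$ is a well-defined bounded bilinear form on $\ch_N/B^1(K)\times\G_N$. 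Next, combining Theorem \ref{cor:Hodge}, the definition of $\|\cdot\|_N$, and the Lemma granting $\sum_\s k_\s dz_\s\in\ch_N$ whenever $k\in\ell_N(\S)$, I would identify $[\o]\mapsto k_\bullet$ as an isometric isomorphism $\ch_N/B^1(K)\to\ell_N(\S)$, the quotient norm of $[\o]$ being $N(k_\bullet)$ (subtract $dU_E$ from $\o$); in particular $\ch_N/B^1(K)$ is a Banach space.

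The core is then to realize $\ell_N(\S)$ as the Banach dual of the completion of $\G_N\otimes_\bz\br$, in three moves. (a) Since each $\S_m=\{0,1,2\}^m$ is finite, $\ell_{N'}(\S):=\{a\in\br^\S:N'(a)<\infty\}$ is an $\ell^1$-sum of finite-dimensional normed spaces, and a direct duality computation gives $\ell_{N'}(\S)^{*}=\ell_N(\S)$ under the pairing $(k,a)\mapsto\sum_\s k_\s a_\s$. (b) The period map $g\mapsto\f_\bullet(g)=(\f_\s(g))_\s$ is, by the very definition $\ell_N(g)=N'(\f_\bullet(g))$, an isometric injection of $(\G_N,\ell_N)$ into $(\ell_{N'}(\S),N')$; it extends to an isometric embedding of the completion of $\G_N\otimes_\bz\br$ onto a closed subspace $Z\subset\ell_{N'}(\S)$, and under (a) together with the identification of $\ch_N/B^1(K)$ with $\ell_N(\S)$ the form $\langle\cdot,\cdot\rangle$ becomes the restriction of the canonical duality $\ell_N(\S)\times\ell_{N'}(\S)\to\br$ to $\ell_N(\S)\times Z$. (c) It remains to prove that $Z$ is dense in $\ell_{N'}(\S)$, equivalently that its annihilator $Z^{\perp}\subset\ell_N(\S)$ is trivial. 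If $k\in\ell_N(\S)$ annihilates every $\f_\bullet(g)$, $g\in\G_N$, it annihilates in particular each $\f_\bullet(g_\t)=(B_{\s\t})_\s$ — note $g_\t\in\G_N$, the lacuna $\ell_\t$ being an elementary path, hence of finite effective length — so all periods $\int_{\ell_\t}\big(\sum_\s k_\s dz_\s\big)=\sum_\s k_\s B_{\s\t}$ vanish; by the de Rham second theorem in Theorem \ref{12deRham} the form $\sum_\s k_\s dz_\s\in\ch_N$ is then exact, but being also harmonic it must vanish by the orthogonality of exact and harmonic forms (Theorem \ref{cor:Hodge}), so $k=0$. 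By Hahn--Banach $Z$ is dense, hence $Z^{*}=\ell_N(\S)/Z^{\perp}=\ell_N(\S)$, and since the dual of a normed space coincides with that of its completion we obtain $(\G_N\otimes_\bz\br)^{*}\cong\ell_N(\S)\cong\ch_N/B^1(K)$, implemented by the pairing. Nondegeneracy in both variables is then automatic: the dual separates points of the predual, and conversely if $\langle\o,g\rangle=0$ for all $\o\in\ch_N$ then $\o=dz_\s$ gives $\f_\s(g)=0$ for all $\s$, so $\f_\bullet(g)=0$ and $g=e$, the $z_\s$ separating the points of $\LL$ by Lemma \ref{lem:sametop}.

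The step I expect to be the main obstacle is (c) — the density of $\G_N\otimes_\bz\br$ in $\ell_{N'}(\S)$ — hand in hand with pinning down the precise normed-space structure on $\G_N\otimes_\bz\br$ (the extension of the length function $\ell_N$ and the injectivity of the period map on the tensor product). This is where the topology of the gasket genuinely intervenes, through the de Rham second theorem and the Hodge orthogonality; by contrast the dual computation (a) is routine once one notices the finiteness of the $\S_m$, and $(i)$, $(ii)$ need no new argument.
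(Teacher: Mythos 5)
Your proposal is correct, and for parts $(i)$ and $(ii)$ it does exactly what the paper does, namely quote the lemma immediately preceding the theorem. For $(iii)$ you take a genuinely different (and in one respect more careful) route. The paper identifies $\ch_N/B^1(K)$ with $\ell_N(\S)$, as you do, but then identifies $\G_N\otimes_\bz\br$ \emph{outright} with $\ell_{N'}(\S)$: it invokes Theorem \ref{deRham} to view elements of $\check{H}_1(K,\br)=\G\otimes_\bz\br$ as sequences $\a_\s=\langle\a,dz_\s\rangle$ and then cuts down by the condition $N'(\a_\bullet)<\infty$, after which the thesis is just the standard $\ell_{N}$--$\ell_{N'}$ duality you record in step (a). You instead only \emph{embed} $\G_N\otimes_\bz\br$ into $\ell_{N'}(\S)$ isometrically via the period map and prove that its image is dense, by showing the annihilator in $\ell_N(\S)$ is trivial: a sequence $k$ annihilating all $\f_\bullet(g_\t)$ gives a form $\sum_\s k_\s dz_\s\in\ch_N$ with vanishing periods, hence exact by the second de Rham theorem of Theorem \ref{12deRham}, hence zero by the Hodge orthogonality of Theorem \ref{cor:Hodge}. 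This buys something real: taken literally, the algebraic tensor product $\G_N\otimes_\bz\br$ maps onto a proper (dense) subspace of $\ell_{N'}(\S)$, so the paper's isomorphism must be read up to completion, and your Hahn--Banach density argument is precisely the step that makes the ``Banach space dual'' claim rigorous (the dual of a normed space being that of its completion). The price is the point you yourself flag: to speak of an isometric embedding of $\G_N\otimes_\bz\br$ you need the period map to be injective on the tensor product and the length function to induce an actual norm there. This is not a serious obstacle: the winding-number sequences of elements of $\G$ are integer-valued, and $\bz$-independent integer sequences are automatically $\bq$-, hence $\br$-, independent, so injectivity holds; alternatively one can work throughout with the induced seminorm, since the dual space is unaffected. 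With that remark supplied, your argument is complete and, if anything, fills in a step the paper glosses over.
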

\begin{proof}
The first two statements have been proved above. As for the third, we observe that, for $g\in\G_N$, $\o\in\ch_N$, the pairing $\langle g,\o\rangle$ may be defined as $\int_\g\o$, where $\g$ is any closed path giving a representative of $g$. Then, $g\in\G_N$ {\it iff} the sequence $\{\langle g,dz_\s\rangle\}_{\s\in\S} $ belongs to $\ell_{N'}(\S)$.
On the other hand, we proved in Theorem \ref{deRham} that $\check{H}_1(K,\br)=\G\otimes_\bz\br$ may be identified with the dual of the space of cohomology classes of locally exact 1-forms.
Since any such class may be uniquely described through its harmonic representative $\o=\sum_\s k_\s dz_\s$, namely by the eventually zero sequence $k:=\{k_\s\}$,  the elements of $\G\otimes_\bz\br$ may be identified with infinite sequences $\a=\{\a_\s\}$, with $\langle\a,\o\rangle=\sum_\s\a_\s k_\s$.
Then, the sequence $\a=\{\a_\s\}$ belongs to $\G_N\otimes_\bz\br$ {\it iff} $\{ \a_\s\}_{\s\in\S}=\{\langle \a,dz_\s\rangle\}_{\s\in\S} \in\ell_{N'}(\S)$, or, equivalently, $\G_N\otimes_\bz\br \cong \ell_{N'}(\S)$. Since $\ch_N/B^1(K)$ may be identified with $\ell_{N}(\S)$, the thesis follows.
\end{proof}

We observe that, for any choice of the norm $N$ such that the $d_N$-components of $\LL$ projects surjectively on $K$, it is possible to construct potentials of any form in $\ch_N$ on the $d_N$-components. The choice of the norm $N$ affects both the size of the $d_N$-components and the smoothness of the forms that can be lifted there. In particular, we may lift less regular 1-forms but on smaller $d_N$-components. However,  lifting all forms in the tangent bimodule $\ch$ means getting $d_N$-components as small as to contain no edge.
\begin{prop}\label{lastrem}
Choosing $N$ such that $\|\cdot\|_N=\|\cdot\|_\ch$, no edge has finite effective length.
\end{prop}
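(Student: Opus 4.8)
The plan is to make the norm $N$ fully explicit, compute its dual $N'$, and then evaluate $N'$ on the period sequence $\{\int_e dz_\s\}_{\s\in\S}$ of a fixed edge $e$. First I would pin down $N$. Since, by \eqref{dzsigmanorm}, $\|dz_\s\|_\ch^2=Q[dz_\s]=\frac56\left(\frac53\right)^{|\s|}$ and the Hodge decomposition $\o=dU_E+\sum_\s k_\s dz_\s$ of Theorem \ref{cor:Hodge} is $Q$-orthogonal, the choice $\|\cdot\|_N=\|\cdot\|_\ch$ amounts to taking $N$ to be (a harmless positive multiple of) the weighted $\ell^2$ norm $N(a)=\big(\sum_\s (5/3)^{|\s|}|a_\s|^2\big)^{1/2}$ on $\br^\S$; this is exactly the norm for which $\ch_N=\ch$. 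Its dual norm for the pairing $\langle a,b\rangle=\sum_\s a_\s b_\s$ is then, by Cauchy--Schwarz, the weighted $\ell^2$ norm $N'(b)=\big(\sum_\s (3/5)^{|\s|}|b_\s|^2\big)^{1/2}$.

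Next I would estimate the periods of a fixed edge $e\in E_n$, writing $C_\s$, $|\s|=n$, for the cell of which $e$ is a boundary edge. As recalled in the proof of Lemma \ref{edgeshavefinitelength}, $\int_e dz_\t$ vanishes unless $\t\ge\s$ or $\t<\s$. The decisive contribution comes from the indices $\t>\s$: for each $j\ge1$ the edge $e$ splits into $2^j$ sub-edges of level $n+j$, each of which is a perimeter edge of exactly one cell $C_\t$ with $\t>\s$ and $|\t|=n+j$; since $dz_\t$ has integral $-1/3$ on each of the three perimeter edges of $C_\t$ and vanishes on the cells of level $n+j$ other than $C_\t$, we get $|\int_e dz_\t|=1/3$ for precisely these $2^j$ indices (and $0$ for the remaining ones of level $n+j$). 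Hence, keeping only these terms,
\[
\l(e)^2 \;=\; N'\big(\textstyle\int_e dz_\bullet\big)^2 \;\ge\; \sum_{j\ge1}\Big(\tfrac35\Big)^{n+j}\cdot 2^j\cdot\tfrac19 \;=\;\frac{(3/5)^n}{9}\sum_{j\ge1}\Big(\tfrac65\Big)^j \;=\;+\infty,
\]
because $6/5>1$. This gives $\l(e)=+\infty$ for every edge $e\in E_*$, which is the assertion.

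The combinatorial input — that the sub-edges of $e$ at level $n+j$ are distributed one per cell among exactly $2^j$ of the level-$(n+j)$ subcells of $C_\s$ — is routine from self-similarity (the two subcells of $C_\s$ meeting $e$ each meet one half of $e$, and one iterates). The point I would emphasize, which is the only subtle step, is the contrast with Lemma \ref{edgeshavefinitelength}: there the dual norm $N'(b)=\sum_m(3/5)^m\sup_{|\s|=m}|b_\s|$ records only one index per level, so the multiplicities are invisible and the geometric series in $(3/5)^m$ converges; here $N'$ is an $\ell^2$-sum over \emph{all} indices, so the multiplicity $2^j$ at level $n+j$ produces the diverging factor $(6/5)^j$. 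No substantial obstacle beyond recording this distinction carefully is expected.
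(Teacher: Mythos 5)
Your proposal is correct and follows essentially the same route as the paper: identify $N$ (hence its dual $N'$) as a weighted $\ell^2$ norm, observe that at each level below the edge exactly $2^j$ lacunas contribute a period of modulus $1/3$, and conclude that the resulting series behaves like $\sum_j(6/5)^j=+\infty$. The only difference is cosmetic: the paper computes for the single edge $e_1\in E_0$ (where the contributing indices are the words avoiding the letter $1$) and notes the other edges are analogous, while you run the same count directly for a general edge $e\in E_n$.
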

\begin{proof}
We prove the statement for the edge $e_1$ opposite to the vertex $p_1\in V_0$, the other cases follow in a similar way.
The norms $\|\cdot\|_N$ and $\|\cdot\|_\ch$ coincide if we set $N(a)^2=5/6\sum_\s(5/3)^{|\s|}|a_\s|^2$.
Therefore $ \l(e_1)^2=6/5\sum_\s(3/5)^{|\s|}|\int_{e_1}dz_\s|^2$.
Since $\int_{e_1}dz_\s=-1/3$ if the multi-index $\s$ does not contain the index $1$, and vanishes otherwise, we get
$\l(e_1)^2=(2/15)\sum_n(6/5)^{n}=+\infty$.
\end{proof}

\appendix

\section{The projective limit topology on $\LL$ is generated by potentials}

\begin{lem}\label{lem:axes}
Let $C_{\s i}$ be one of the three subcells of the cell $C_\s$,  denote by $z_\s^i$ the potential of $dz_\s$ on $C_{\s i}$, and by $x_\s^i=w_\s (p_i)$ the common vertex of $C_{\s i}$ and $C_\s$. Then
\begin{itemize}
\item[$(a)$] The set $\{x\in C_{\s i}:z_\s^i(x)=z_\s^i(x_\s^i)\}$ coincides with the intersection $A_\s^i$ of $C_{\s i}$ with the axis of the edge $e_{\s i}^i=w_{\s i} (e_i)$ opposite to $x_\s^i$ in $C_{\s i}$.
\item[$(b)$]  All points in $A_\s^i$ are vertices.
\end{itemize}
\end{lem}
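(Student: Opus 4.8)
The plan is to transport the statement to the model cell $C_{\s i}=K$ via the similarity $w_{\s i}$. This map carries $p_i$ to the apex $x_\s^i=w_\s(p_i)$, the outer edge $e_i$ (opposite $p_i$) onto $e_{\s i}^i$, vertices to vertices (since $w_{\s i}(V_*)\subseteq V_*$), and the perpendicular bisector $r$ of $e_i$ — its ``axis'', equivalently the mirror axis of $K$ through $p_i$ and the midpoint $\mu_i$ of $e_i$ — onto the axis of $e_{\s i}^i$; moreover, by Proposition~\ref{wclosed}$(i)$, $z_\s^i\circ w_{\s i}$ is, up to an additive constant, the harmonic function $z$ on $K$ with $z(p_i)=0$ and boundary values $\pm\tfrac16$ on the endpoints of $e_i$, say $z(p_j)=\tfrac16$, $z(p_k)=-\tfrac16$ (here $\{i,j,k\}=\{0,1,2\}$). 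Thus $A_\s^i=w_{\s i}(A)$ with $A:=K\cap r$, and it suffices to prove on $K$ both that $\{x:z(x)=0\}=A$ and that $A\subseteq V_*$.

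I would first handle the inclusion $A\subseteq\{z=0\}$ and part $(b)$ together. Let $H$ be the reflection of $\br^2$ across $r$; since $r$ is an axis of symmetry of the outer triangle, $H(K)=K$, $H$ fixes $p_i$ and interchanges $p_j,p_k$, so $z\circ H$ is the harmonic function with the boundary values of $-z$, whence $z\circ H=-z$ by uniqueness of the harmonic extension; therefore $z\equiv0$ on $\mathrm{Fix}(H)\cap K=A$. For $(b)$: the subcell $w_i(K)$ is $H$-invariant (its apex $p_i$ lies on $r$), while each of the other two subcells $w_l(K)$, $l\ne i$, meets $r$ in the single point $\mu_i$, a vertex of $w_l(K)$; by self-similarity $A=\{\mu_i\}\cup\big(w_i(K)\cap r\big)=\{\mu_i\}\cup w_i(A)$, and iterating gives $A=\{p_i\}\cup\{w_i^{m}(\mu_i):m\ge0\}$ (the points $w_i^m(\mu_i)$ converging to $p_i$). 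Since $\mu_i\in V_1$ and $w_i^m(V_*)\subseteq V_*$, every point of $A$ is a vertex, which proves $(b)$ after applying $w_{\s i}$. I record the by-product needed below: $r$ meets the relative interior of an edge of some $E_n$ only when that edge is one of the $w_i^{m}(e_i)$, and then only at its midpoint $w_i^{m}(\mu_i)$.

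It remains to show $\{z=0\}\subseteq A$. Write $K=K^+\cup A\cup K^-$, where $K^\pm$ are the open parts of $K$ strictly on the two sides of $r$, with $p_j\in K^+$, $p_k\in K^-$. The relative boundary of $K^+$ in $K$ is $A$, where $z=0$; as $z$ is harmonic on $K^+\setminus\{p_j\}$ and $z(p_j)=\tfrac16$, the Maximum Principle for the gasket gives $z\ge0$ on $\overline{K^+}$, and symmetrically $z\le0$ on $\overline{K^-}$. For the strict inequalities I would prove, by induction on $n$, that a vertex $v\in V_n$ satisfies $z(v)>0$, $z(v)=0$, or $z(v)<0$ according as $v\in K^+$, $v\in A$, or $v\in K^-$. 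The base $n=0$ is the choice of boundary values. For the step, $v\in V_{n+1}\setminus V_n$ is the midpoint of a side $u_1u_2$ of the unique level-$n$ cell $C$ of which $u_1u_2$ is a perimeter edge, with third vertex $u_3$, and $z(v)=\tfrac25 z(u_1)+\tfrac25 z(u_2)+\tfrac15 z(u_3)$. The by-product forces $C$ to be of one of only two kinds: either all three vertices lie on one closed side of $r$ (and then the induction hypothesis gives the claimed sign directly, $v$ lying strictly on that side), or $C$ is symmetric about $r$ with one vertex on $r$ and one strictly on each side — in which case $z$ of the symmetric pair are opposite (as $z\circ H=-z$) and the three contributions again combine to the claimed sign. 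Finally, a zero of $z$ at a non-vertex point of $K^+$ is excluded: such a point lies in the interior of a level-$n$ cell $D\subseteq K^+$ for $n$ large; $z|_D$ is the non-negative harmonic extension $\sum_l z(u_l)\,h_l$ of its vertex values $z(u_l)\ge0$, the shape functions $h_l\ge0$ summing to $1$, so evaluating at that point forces some $z(u_l)$ to vanish — a vertex of $K^+$ with $z=0$, contrary to the induction; symmetrically on $K^-$. Hence $\{z=0\}=K\setminus(K^+\cup K^-)=A$.

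The main obstacle is this last inclusion: the gasket carries no ready-made strong maximum principle ruling out interior extrema, and a fractal harmonic function can in fact attain an extremum at a non-vertex point, so the sign control must be obtained combinatorially. The linchpin is the by-product of the self-similar analysis of $A$ — the short, explicit list of cells that straddle the axis $r$ — which is exactly what makes the inductive sign computation close; the rest is routine.
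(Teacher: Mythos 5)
Your argument is correct, and all the facts you invoke (invariance of the harmonic structure under the reflection fixing $p_i$, the $\tfrac15$--$\tfrac25$ extension rule, positivity of the harmonic extension on a cell) are standard and indeed used elsewhere in the paper, e.g.\ the reflection invariance appears in the proof of Lemma \ref{kemptyset}. Your route, however, differs from the paper's at the decisive step. The paper works inside the single cell ($\s=\emptyset$, $i=1$, normalized so that $z(p_1)=0$) and proves by induction a Claim that decomposes the cell into the spine cells $C_{\mathbf{1}_n}$ and the off-spine cells $C_{\mathbf{1}_k 0},C_{\mathbf{1}_k 2}$, computes the explicit values $\mp\tfrac16\,5^{-n+1}$ of $z$ at the spine vertices by harmonic extension, and then applies the maximum principle on each off-spine cell (whose three boundary values are then known and have a strict sign pattern) to conclude that the value $0$ is attained there only at the one vertex lying on the axis; the description of the axis as $\{p_1\}\cup\{w_1^m(\mu_1)\}$ and part $(b)$ come out of the same decomposition. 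You instead obtain $A\subseteq\{z=0\}$ and part $(b)$ from the reflection antisymmetry $z\circ H=-z$ and the self-similar identity $A=\{\mu_i\}\cup w_i(A)$, and you replace the paper's explicit value computation by a global sign-propagation induction over $V_n$ (using the $\tfrac15$--$\tfrac25$ rule and your classification of the cells meeting the axis, which is the same spine structure the paper exploits), finishing with the shape-function positivity argument to exclude non-vertex zeros off the axis. What the paper's proof buys is brevity and explicit numerics along the spine, which directly feed its per-cell maximum-principle argument; what yours buys is that no value need be computed, the inclusion $A\subseteq\{z=0\}$ comes for free from symmetry, and you get slightly more than the lemma asks, namely the strict dichotomy $z>0$ on $K^+$ and $z<0$ on $K^-$.
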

\begin{proof}
It is not restrictive to assume that $\s=\emptyset$, $i=1$, $z(p_1):=z^1_\emptyset(p_1)=0$. We first prove the following statement.

\begin{claim} For any $n\in \bn$, denote by $\mathbf{1}_n$ the multi-index of length $n$ and taking only the value $1$, and let $\Th_n:= \set{\mathbf{1}_k: k=1,\ldots,n}$. Then,
\begin{equation}\label{decompCell}
C_1=C_{\mathbf{1}_n}\cup\bigcup_{\r\in\Th_{n-1}} C_{\r 0}\cup C_{\r 2}.
\end{equation}
\item[$(i)$] If $x\in C_{\r 0}$, $\r\in\Th_{n-1}$, and $z(x)=0$ then $x=w_{\r 0}(p_2)$, hence is on the axis $A:=A^1_\emptyset$. Analogously, if $x\in C_{\r 2}$, $\r\in\Th_{n-1}$, and $z(x)=0$ then $x=w_{\r 2}(p_0)\in A$.
\item[$(ii)$] The values of $z$ at the points $w_{\mathbf{1}_n}(p_0)$, $w_{\mathbf{1}_n}(p_2)$ are, respectively, $-1/6\cdot 5^{-n+1}$, $1/6\cdot 5^{-n+1}$.
\end{claim}
\begin{proof}[Proof of the Claim]
The statement clearly holds for $n=1$. Suppose now it is true for some $n$. Since $C_{\mathbf{1}_n}=C_{\mathbf{1}_n 0}\cup C_{\mathbf{1}_n 1}\cup C_{\mathbf{1}_n 2}$, equality \eqref{decompCell} still holds. By harmonic extension, the boundary values of $z$ on $C_{\mathbf{1}_n 0}$ are $-1/6\cdot 5^{-n+1}$, $-1/6\cdot 5^{-n}$ and $0$, hence, by the maximum principle, the value $0$ is assumed only on the vertex, proving $(i)$. The proof of $(ii)$ also follows by harmonic extension.
\end{proof}

Now we turn to the proof of the Lemma. If $z(x)=0$, either $x\in A$ or $x\in \cap_n C_{\mathbf{1}_n}$, which means $x=p_1\in A$. Conversely, if $x\in A$, either $x$ is a vertex and $z(x)=0$ or $x\in \cap_n C_{\mathbf{1}_n}$, which means $x=p_1$ hence $z(x)=0$. Both $(a)$ and $(b)$ then follow.
\end{proof}

\begin{lem}\label{lem:integer}
For any $g\in\G_n$, there exists $|\s|<n$ such that $\f_\s(g)$ is a non-vanishing integer, where $\f_\s$ is the homomorphism associated with the $\G_n$-affine potential $z_\s$.
\end{lem}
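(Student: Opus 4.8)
The plan is to reduce the statement to the lower-unitriangular structure of the matrix $B=\{B_{\r\t}\}=\{\int_{\ell_\t}dz_\r\}$ of Section~\ref{sec:winding}. First I would dispose of the trivial case $g=e$, for which $\f_\s(g)=0$ for every $\s$; so assume $g\neq e$. Since $\G_n$ is the free abelian group on the classes $g_\t$ of the lacunas $\ell_\t$ with $|\t|\leq n-1$, write $g=\sum_{|\t|\leq n-1}m_\t g_\t$ with $m_\t\in\bz$, all but finitely many equal to zero and not all zero, and set $S:=\{\t:m_\t\neq0\}$, a finite non-empty subset of $\Sigma$. The homomorphism $\f_\s$ is characterized by $z_\s(gx)-z_\s(x)=\f_\s(g)$; applying this to a lifting of the loop $\ell_\t$ and invoking eq.~\eqref{eq:n-primitive} yields $\f_\s(g_\t)=\int_{\ell_\t}dz_\s=B_{\s\t}$, hence by linearity $\f_\s(g)=\sum_{\t\in S}m_\t B_{\s\t}$.

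The crucial step is the choice of $\s$. Recall that $B_{\s\t}=0$ unless $\t\leq\s$ in the prefix order and that $B_{\s\s}=1$. I would pick $\s$ to be a \emph{minimal} element of $S$ for the prefix order --- such an element exists because $S$ is finite and non-empty, and minimality is meaningful since the down-set $\{\t:\t\leq\s\}$ is exactly the chain of truncations of $\s$. Then in $\f_\s(g)=\sum_{\t\in S}m_\t B_{\s\t}$ every term with $\t\neq\s$ vanishes: if $\t$ is not a prefix of $\s$ then $B_{\s\t}=0$, while if $\t$ is a proper prefix of $\s$ then $\t\notin S$ by minimality, so $m_\t=0$. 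Hence $\f_\s(g)=m_\s B_{\s\s}=m_\s$, a non-vanishing integer, and $|\s|\leq n-1<n$, which is the claim.

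The argument is short once the right index is singled out, and the point I expect to be the crux is exactly that one must take a \emph{minimal} rather than a maximal element of the support: at a maximal element the lower-triangular tail $\sum_{\t<\s}m_\t B_{\s\t}$ is generally present, and since the nonzero off-diagonal entries of $B$ equal $-1/3$, the value $\f_\s(g)$ need not be an integer there. Minimality works precisely because the prefix order on $\Sigma$ is a rooted-forest order, so the predecessors of any word form a chain and can be annihilated all at once. Two routine points to verify along the way are the identity $\f_\s(g_\t)=B_{\s\t}$ (immediate from eq.~\eqref{eq:n-primitive} together with the definition of $B$) and the fact, used above, that the classes $g_\t$, $|\t|\leq n-1$, form a $\bz$-basis of $\G_n$ and not merely a generating set, which is part of the description of $\G_n$ recalled in Section~\ref{extendedIntegral}.
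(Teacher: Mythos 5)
Your proposal is correct and follows essentially the same route as the paper: both identify $\f_\s(g_\t)=\int_{\ell_\t}dz_\s=B_{\s\t}$, exploit the lower-unitriangular structure of $B$ (nonzero only for $\t\leq\s$, with $B_{\s\s}=1$), and choose an index in the support of $g$ so that all off-diagonal terms vanish, yielding $\f_\s(g)=k_\s$. The only cosmetic difference is that the paper takes $\s$ of minimal \emph{length} in the support while you take a prefix-minimal element, which plays the identical role (and your explicit exclusion of $g=e$ is a harmless sharpening of the statement).
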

\begin{proof}
The element $g$ may be uniquely decomposed as $g=\prod_{|\t|<n}g_\t^{k_\t}$, where $g_\t$ denotes the homology class of the lacuna $\ell_\t$ according to the identification $\G_n=H_1(T_n)$. If we choose $\s$ of minimal length such that $k_\s\neq0$, we have
$$
\f_\s(g)=\sum_{|\s|\leq|\t|<n}k_\t\f_\s(g_\t)=\sum_{|\s|\leq|\t|<n}k_\t\int_{\ell_\t}dz_\s=k_\s,
$$
where we used the fact that, as observed at the beginning of Subsection \ref{sec:winding},  $\int_{\ell_\t}dz_\s$ is non-zero only if $\t\leq\s$.
\end{proof}

\begin{prop}\label{lem:sametop}
 The weak topology $\ct(z_\s)$ induced by $\{z_\s:\s\in\S\}$ on $\LL$ coincides with the projective limit topology.
\end{prop}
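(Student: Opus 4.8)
The plan is to prove the two inclusions $\ct(z_\s)\subseteq\ct_{\mathrm{proj}}$ and $\ct_{\mathrm{proj}}\subseteq\ct(z_\s)$ separately, where $\ct_{\mathrm{proj}}$ is the projective limit topology on $\LL=\lim_{\leftarrow}\LL_n$. The first inclusion is immediate: by construction $z_\s$ is the lift to $\LL$ of the $\G_{|\s|+1}$-affine potential of $dz_\s$ on $\LL_{|\s|+1}$, which is a locally finite energy function on $\LL_{|\s|+1}$, hence continuous; writing $P_n\colon\LL\to\LL_n$ for the canonical (projective limit) projection, $z_\s$ factors as (that potential)$\circ\,P_{|\s|+1}$, so each $z_\s$ is $\ct_{\mathrm{proj}}$-continuous, whence $\ct(z_\s)\subseteq\ct_{\mathrm{proj}}$. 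For the reverse inclusion it suffices to show that each $P_n$ is $\ct(z_\s)$-continuous. Since $p_n\colon\LL_n\to K$ is a covering which is trivial over each cell of level $n$, and over each union of two adjacent such cells (the projection being one-to-one on the lifts of such sets, as already used above), a neighbourhood basis of $\tilde x_n\in\LL_n$ over $x\in K$ is given by the sheets $\widetilde W_m$ over $W_m$, where $W_m$ is the union of the level-$m$ cells containing $x$, $m\ge n$. Fixing $\hat x\in\LL$ over $x$ and a basic neighbourhood $\Omega=P_n^{-1}(\widetilde W_m)$ (with $m$ as large as we wish), I would produce a finite family of indices and an $\eps>0$ so that the $\ct(z_\s)$-open set $V:=\{y\in\LL:\ |z_\s(y)-z_\s(\hat x)|<\eps\text{ for all }|\s|<m\}$ lies in $\Omega$, by excluding the two ways $y$ can fail to be in $\Omega$: (i) $p(y)\in W_m$ but $P_n(y)$ is in a sheet different from that of $\tilde x_n$; (ii) $p(y)\notin W_m$.

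Case (i) is handled by Lemma \ref{lem:integer}. Since $\f_\s(g_\t)=\int_{\ell_\t}dz_\s=B_{\s\t}$ and $B$ is lower unitriangular, the map $g\mapsto(\f_\s(g))_{|\s|<n}$ embeds $\G_n$ into $\bz^{\{|\s|<n\}}$; so if $P_n(y)=g\cdot(\text{a lift over }p(y)\text{ in the sheet of }\tilde x_n)$ with $g\neq e$, then $\f_\s(g)$ is a nonzero integer for some $|\s|<n$, and $z_\s(y)$ differs from $z_\s(\hat x)$ by that integer up to the oscillation of the local potential of $dz_\s$ over $W_m$, which is $<\tfrac12$ once $m$ is large. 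Choosing $\eps<\tfrac12$ therefore rules out Case (i).

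Case (ii) is the main obstacle. Here one must see that the finitely many potentials $z_\s$ with $n\le|\s|<m$, read off along the address $\emptyset=\s^{(0)}<\s^{(1)}<\dots<\s^{(m)}$ of a cell of $x$, already separate $x$ from $K\setminus W_m$. On a cell $C_\s$, the three corner potentials of $dz_\s$ on its subcells fit together on a fundamental domain into a ``staircase'': the three subcells occupy three consecutive ranges of values of $z_\s$, whose common endpoints are attained only at single vertices, and, more generally, every critical level set of $z_\s$ is contained by Lemma \ref{lem:axes} in the relevant axis, a nowhere dense set of vertices. Cutting $C_{\s^{(k-1)}}$ into its three subcells by a critical value of $z_{\s^{(k-1)}}$ and iterating from level $n$ down to level $m$, the sets $\{|z_{\s^{(k)}}-z_{\s^{(k)}}(\hat x)|<\eps:\ k<m\}$ shrink, as $\eps\downarrow 0$, to a set whose $p$-image lies in $W_m$; the one delicate point is that $x$ may itself lie on a critical level set (for instance if $x\in V_*$), in which case one takes $W_m$ to be the union of the two cells meeting at $x$, uses that $x$ lies on an axis by Lemma \ref{lem:axes}$(b)$, and replaces the offending one-sided cut by a two-sided band. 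Combining the constraints from Cases (i) and (ii) gives $V\subseteq\Omega$, so every $P_n$ is $\ct(z_\s)$-continuous and $\ct_{\mathrm{proj}}\subseteq\ct(z_\s)$. The step I expect to require the most care, and the full force of Lemma \ref{lem:axes}, is precisely the claim in Case (ii) that finitely many potentials cut a cell out up to a nowhere dense set, together with the bookkeeping at the shared vertices of cells, where a single $z_\s$ resolves the $K$-coordinate only up to the boundary of a cell.
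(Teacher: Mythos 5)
Your overall route is the paper's: the easy inclusion from continuity of the $z_\s$'s, and for the converse a reduction to cell/butterfly neighbourhoods of level $m$, with Lemma \ref{lem:integer} controlling the sheet and Lemma \ref{lem:axes} controlling the position inside a sheet. The genuine gap is in your Case (ii), which is exactly the crux, and it comes from conflating separation in the base with separation in the covering. If $p(y)\notin W_m$, the point $y$ may sit in an arbitrary sheet, i.e.\ $y=gy'$ with $y'$ in a fixed fundamental domain and $g\in\G_m$ arbitrary, and then $z_\s(y)=z_\s(y')+\f_\s(g)$, where the shifts are in general only in $\frac13\bz$, since $\f_\s(g_\t)=\int_{\ell_\t}dz_\s\in\{1,-\frac13,0\}$ (incidentally, this is why your parenthetical claim that $g\mapsto(\f_\s(g))_{|\s|<n}$ embeds $\G_n$ into $\bz^{\{|\s|<n\}}$ is not correct, though your Case (i) only uses Lemma \ref{lem:integer}, which is). A fractional shift can partly cancel the base-value difference, so the ``staircase'' reading of the $z_{\s^{(k)}}$ on $K$ does not by itself separate $x$ from $K\setminus W_m$: you must again invoke Lemma \ref{lem:integer}, and for the index it supplies (not known in advance) the $\eps$-box around $z_\s(\hat x)$ excludes the wrong sheet only if $z_\s(\hat x)$ lies at distance greater than $\eps$ from the endpoints of the length-one range of $z_\s$ on the fundamental domain, for \emph{every} $|\s|<m$. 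Securing this is precisely what the paper's choice of fundamental domain and normalization does (ranges $[-1/2,1/2]$, with the distinguished cell or butterfly ``in the middle''), and it is where the case $x\in V_*$ genuinely bites: if the domain is cut at a lift of the lacuna vertex $x$, the corresponding value is $\pm\frac12$ and the exclusion fails. Your proposal never fixes such a normalization, so the bound you need is simply not available from what you wrote.

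A second, smaller, problem: ``the sets shrink, as $\eps\downarrow0$, to a set whose $p$-image lies in $W_m$'' does not by itself yield a single admissible $\eps>0$, and no compactness is available in $\LL$ to pass from the intersection over $\eps$ to a finite $\eps$. The repair is quantitative, and is what the paper's fixed value windows implement: for each of the finitely many $|\s|<m$ the obstructions form a finite set of values (in the paper's normalization, the subcell-range endpoints $\pm\frac16,\pm\frac12$ and the locally constant outside values $-\frac13,0,\frac13$); Lemma \ref{lem:axes}, together with the maximum principle at the lacuna vertices, guarantees that $z_\s(\hat x)$ avoids the relevant ones unless $x\in V_*$, which is exactly the butterfly case treated separately. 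Choosing $\eps$ below the minimal gap then settles your Cases (i) and (ii) simultaneously. With these two points repaired, your plan becomes essentially the paper's proof, with $\eps$-boxes centred at $\hat x$ in place of its fixed windows; as it stands, the key step is asserted rather than proved.
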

\begin{proof}
We shall prove that, given a point  $\tilde x\in\LL$ and one of its neighborhoods $\widetilde U$ in the projective limit topology, there exists a set $\O$, open in the  weak topology induced by $\{z_\s:\s\in\S\}$, such that $x\in\O\subseteq\widetilde  U$. This proof will in some points split in  three cases:
\begin{enumerate}
\item[(c1)] $p(\tilde x)\not\in V_*$,
\item[(c2)] $p(\tilde x)\in V_0$,
\item[(c3)] $p(\tilde x)\in V_*\setminus V_0$,
\end{enumerate}
where $p:\LL\to K$ is the covering projection. In the course of the proof, we will use the stardard notation $X^\circ$, resp. $\partial X$ for the (topological) interior, resp. boundary, of $X\subset K$. To avoid misunderstanding, we will denote by $C_\s^\iota$, resp. $b C_\s$,  the combinatorial interior, resp. boundary, of a cell $C_\s$. Observe that $C_\s^\circ = C_\s^\iota$ and $\partial C_\s = b C_\s \iff C_\s$ doesn't contain one of the vertices $p_0,p_1,p_2$.

\noindent{\bf About the neighborhood $\widetilde U$.}
By definition, there exists $n\in \bn$ such that $\widetilde U$ is the preimage in $\LL$ of a neighborhood $U$ of $x_0\in\LL_n$, where $\tilde x$ projects onto $x_0$.  It is not restrictive to assume, possibly passing to a higher covering, that
\begin{itemize}
\item[(c1-c2)]
the open set $U$  is the interior of a cell of level $n$ in $\LL_n$.
\item [(c3)]
the open set $U$ is a butterfly shaped neighborhood  made of two cells of level $n$  in $\LL_n$ in such a way that  $p_n(U)$ is not contained in a cell of level $n-1$, where $p_n:\LL_n\to K$ is the covering projection.
\end{itemize}
{\bf The choice of a fundamental domain.} As a closed fundamental domain $\cf$ in $\LL_n$, we pick a finite union of closed cells of level $n$ in $\LL_n$ such that $\cf$ is connected, $p_n(\cf)=K$ and $p_n|_{\cf^\circ}$ is injective, and with the further property that, for any $|\t|=n-1$, $p_n^{-1}(C_\t)\cap\cf$ is connected. We also require that
\begin{itemize}
\item[(c1-c2)] the neighboring cells of $U$ in $\LL_n$, whose projection to $K$ lie in the same cell of level $n-1$ containing $p_n(U)$, still belong to $\cf$. If $p_n(U) = C_{\s i}^\circ$ [$i.e.\ p_n(U) = C_{\s i}^\iota$ or $p_n(U) = C_{\s i}^\iota \cup\set{p_n(x_0)}$], we get in particular that $U$ is in the middle of the preimage $p_n^{-1}(C_\s)\cap\cf$.
\item [(c3)] same as above for the two subcells of the butterfly neighborhood $U$.  If $p_n(U)=(C_{\s i}\cup C_{\r j})^\circ$ [where, by the above assumption, $\s\neq\r$ and $i\neq j$], we get in particular that $U\cap p_n^{-1}(C_\s))$ is in the middle of the preimage $p_n^{-1}(C_\s)\cap\cf$, and $U\cap p_n^{-1}(C_\r)$ is in the middle of the preimage $p_n^{-1}(C_\r)\cap\cf$.
\end{itemize}
{\bf The normalization of the $z_\t$'s.} We have asked  the preimage in $\cf$ of any cell $C_\t$, $|\t|=n-1$ to be connected. Since such preimage consists of three cells of level $n$,  only one of them is intermediate, namely has a vertex in common with the others.  For $|\t|=n-1$, we set $z_\t$ to be zero on the third vertex of such intermediate cell, so that the range of $z_\t$ on $p_n^{-1}(C_\t)\cap\cf$ is $[-1/2,1/2]$. We normalize the $z_\t$ for $|\t|\leq n-2$ such that, again, the range of $z_\t$ on $p_n^{-1}(C_\t)\cap\cf$ is $[-1/2,1/2]$. In particular,
\begin{itemize}
\item[(c1)] the range of $z_\s$ on $U=p_n^{-1}(C_{\s i}^\iota)\cap\cf$ is $(-1/6,1/6)$, because $U$ is the intermediate cell, so that, by Lemma \ref{lem:axes}, $z_\s(x_0)\neq0$,
\item[(c2)] the range of $z_\s$ on $U=p_n^{-1}(C_{\s i}^\iota \cup\set{p_n(x_0)})\cap\cf$ is $(-1/6,1/6)$ and, by Lemma \ref{lem:axes}, $z_\s(x_0)=0$,
\item [(c3)] the ranges of $z_\s$ and $z_\r$ on $U=p_n^{-1}((C_{\s i}\cup C_{\r j})^\circ)\cap\cf$ are equal to $(-1/6,1/6)$ and, by Lemma \ref{lem:axes}, $z_\s(x_0)=z_\r(x_0)=0$.
\end{itemize}
{\bf $\cf^\circ$ is open in the topology $\ct(z_\s)$.}
By definition of $\cf$, for any $x\in \cf^\circ$, and for any $|\t|<n$, the position $z^\cf_\t(p_n(x)):=z_\t(x)$ gives a well defined function on $p_n(\cf^\circ)$.
As a consequence, with the normalization above, $z^\cf_\t$ takes values in $(-1/2,1/2)$ on the open cell $C_\t^\circ$, and is constant on the other cells, with values $-1/3,0,1/3$.
Therefore, for any $|\t|< n$, $\{z_\t(x):x\in \cf^\circ\}=(-1/2,1/2)$.
If $x\not\in\cf$, there exists $x'\in\cf$ and a non trivial $g\in\G_n$ such that $x=gx'$. By Lemma \ref{lem:integer} there exists $|\t|<n$ such that $\f_\t(g)$ is a non zero integer, hence $z_\t(x)=\f_\t(g)+z_\t(x')\in(-\infty,-1/2]\cup[1/2,+\infty)$.
Also, if $x\in\partial\cf$, $p_n(x)\in V_n\setminus V_0$, hence $\exists!\t$, $|\t|<n$ such that $p_n(x)$ is a vertex of $\ell_\t$ and $z_\t(x)=\pm1/2$. Then,
\begin{equation}
\{x\in\LL_n:z_\t(x)\in(-1/2,1/2),|\t|< n\} = \cf^\circ
\end{equation}
{\bf The construction of $\O$.}
\begin{itemize}
\item[(c1)] Set $\displaystyle \O=\bigcap_{\t\neq\s,|\t|=n-1}z_\t^{-1}(-1/2,1/2)
\cap z_\s^{-1}\{(-1/6,0)\cup(0,1/6)\}$. The result above  implies $\O\subset \cf^\circ$. By the chosen normalization, the values of $z_\s^\cf$ on the cells different from $C_\s$ can only be $-1/3$, $0$ or $1/3$, hence the values in  $(-1/6,0)\cup(0,1/6)$ are only assumed in $C_{\s i}^\circ \equiv C_{\s i}^\iota$. Therefore $\O\subset U$.
\item[(c2)] Set $\displaystyle \O=\bigcap_{|\t|=n-1}z_\t^{-1}(-1/6,1/6)$. Again $\O\subset \cf^\circ$. Since $p_n(x_0)$ is in $V_0$, the values of $z_\s^\cf$ on the cells different from $C_\s$ can only be $-1/3$ or $1/3$, namely the values $(-1/6,1/6)$ are only assumed in $C_{\s i}^\circ \equiv C_{\s i}^\iota \cup\set{p_n(x_0)}$. Therefore $\O\subset U$.
\item [(c3)] Set $\displaystyle \O=\bigcap_{|\t|=n-1}z_\t^{-1}(-1/6,1/6)$. Again $\O\subset \cf^\circ$. By construction, the removal of the cell $C_\s^\circ$ disconnects $p_n(\cf^\circ)$, and we call $D_\s(-1/3),D_\s(0),D_\s(1/3)$ the (connected) components according to the value of $z_\s^\cf$ on them. In the same way, the removal of the cell $C_\r^\circ$ disconnects  $p_n(\cf^\circ)$, and we call $D_\r(-1/3),D_\r(0),D_\r(1/3)$ the components according to the value of $z_\r^\cf$ on them.
Note that, by the simple connectedness of $T_n$, $D_\s(0)=\{p_n(x_0)\}\sqcup C_\r^\circ\sqcup D_\r(-1/3)\sqcup D_\r(1/3)$ and $D_\r(0)=\{p_n(x_0)\}\sqcup C_\s^\circ\sqcup D_\s(-1/3)\sqcup D_\s(1/3)$, where $\sqcup$ denotes disjoint union.
Then, the prescription $z_\s^\cf(y)\in(-1/6,1/6)$ selects $C^\circ_{\s i}\cup D_\s(0)$, the prescription $z_\r^\cf(y)\in(-1/6,1/6)$ selects $C^\circ_{\r j}\cup D_\r(0)$, both select $(C_{\s i}\cup C_{\r j})^\circ$, implying $\O\subset U$.
\end{itemize}
Since in all three cases $\O\in \ct(z_\s)$, we have proved the thesis.

\end{proof}

%: REFERENCES

\end{document}